\newtheorem{thm}{Theorem}[section]
\newtheorem{prop}[thm]{Proposition}
\newtheorem{lem}[thm]{Lemma}
\newtheorem{cor}[thm]{Corollary}
\theoremstyle{definition}
\newtheorem{defn}[thm]{Definition}
\newtheorem{remk}[thm]{Remark}
\newtheorem{remks}[thm]{Remarks}
\newtheorem{exm}[thm]{Example}
\newtheorem{exms}[thm]{Examples}
\newtheorem{notat}[thm]{Notation}
\numberwithin{equation}{section}
\newcommand{\sF}{{\mathcal F}}
\newcommand{\sO}{{\mathcal O}}
\newcommand{\sV}{{\mathcal V}}
\newcommand{\sX}{{\mathcal X}}
\newcommand{\A}{{\mathbb A}}
\newcommand{\G}{{\mathbb G}}
\renewcommand{\P}{{\mathbb P}}
\newcommand{\Q}{{\mathbb Q}}
\newcommand{\R}{{\mathbb R}}
\newcommand{\Z}{{\mathbb Z}}
\newcommand{\CH}{{\rm CH}}
\newcommand{\surj}{\twoheadrightarrow}
\newcommand{\inj}{\hookrightarrow}
\newcommand{\Hom}{{\rm Hom}}
\newcommand{\Spec}{{\rm Spec \,}}
\newcommand{\ds}{{/\kern-3pt/}}
\newcommand{\Tor}{{\operatorname{Tor}}}
\newcommand{\ov}{\overline}
\newcommand{\wt}{\widetilde}
\def \bdG{\mathbf G}
\def \bdK{\mathbf K}
\renewcommand{\hom}{\text{hom}}
\renewcommand{\dim}{\text{\rm dim}}
\newcommand{\tuborg}{\left\{\begin{array}{ll}}
\newcommand{\sluttuborg}{\end{array}\right.}
\begin{document}
\title{Higher K-theory of Toric stacks}
\author{Roy Joshua and Amalendu Krishna}
\thanks{The first author was
supported by a grant from the National Science Foundation. The second author 
was supported by the Swarnajayanti fellowship, Govt. of India, 2011.}
\address{Department of Mathematics, Ohio State University, Columbus, Ohio, 
43210, USA}
\address{School of Mathematics, Tata Institute of Fundamental Research,
Homi Bhabha Road, Colaba, Mumbai, India}
\email{joshua@math.ohio-state.edu}
\email{amal@math.tifr.res.in}
\baselineskip=10pt

\keywords{toric, fan, stack, K-theory}

\subjclass[2010]{19L47, 14M25}

\begin{abstract}
In this paper, we develop several techniques for computing the higher 
G-theory and K-theory of quotient stacks. Our main results for 
computing these groups are in terms of spectral sequences.
 We show that these spectral sequences degenerate in the case of
many toric stacks, thereby providing an efficient computation of their
higher K-theory. 
We apply our main results to give explicit description for the 
higher K-theory for many smooth toric stacks. As another application,
we describe the higher K-theory of toric stack bundles over smooth
base schemes. 
\end{abstract}

\maketitle

\section{Introduction}\label{section:Intro}
Toric varieties form 
a good testing ground for verifying many conjectures in algebraic geometry. 
This becomes particularly apparent when one wants to 
understand  cohomology theories for algebraic varieties.
Computations of 
cohomology rings of smooth toric varieties such as the Grothendieck ring of 
vector bundles, the Chow ring and the singular cohomology ring have been 
well-understood for many years.
These computations facilitate  predictions on the structure of various
cohomology rings of a general algebraic variety.

Just like toric varieties, one would like to have a class of algebraic
stacks on which various cohomological problems about stacks can be tested.
The class of {\sl toric stacks}, first introduced and studied in \cite{BCS} by 
Borisov, Chen and Smith, in terms of 
combinatorial data called {\sl stacky fans},
is precisely such a class of algebraic stacks.
These stacks are expected to be the toy models for
understanding cohomology theories of algebraic stacks, a problem which is
still very complicated in general.

Such a point of view probably accounts for the recent flurry of activity
in this area with several groups considering various forms of toric stacks.
(See for example, \cite{FMN}, \cite{Laf}, \cite{GSI} in addition to \cite{BCS}.)
In \cite{FMN}, Fantechi, Mann and Nironi study the structure of toric 
Deligne-Mumford stacks 
in detail. Recently, in \cite{GSI}, Geraschenko and Satriano consider in detail
{\sl toric stacks} which may not
be Deligne-Mumford.  A class of toric stacks of this kind and their cohomology
were earlier considered by Lafforgue \cite{Laf} in the study of geometric
Langlands correspondence. 
All examples of toric stacks studied before, including those in
\cite{BCS}, \cite{FMN} and \cite{Laf} are shown to be special cases of the 
stacks introduced in \cite{GSI}. 
These stacks appear naturally while solving certain moduli problems and 
computation of their cohomological invariants allows us to understand 
these invariants for many moduli spaces.

In \cite{BCS}, Borisov, Chen and Smith computed the rational Chow ring and 
orbifold Chow ring of toric Deligne-Mumford stacks.  
The integral version of this result for certain type of 
toric Deligne-Mumford stacks is due to Jiang and Tseng \cite{JTseng1}, 
and Iwanari \cite{Iwanari}.  Jiang and Tseng also extend some of these
results to certain toric stack bundles in \cite{Jiang} and \cite{JTseng2}.
Borisov and Horja \cite{BH} computed the integral Grothendieck ring 
$\bdK _0(X)$ of
a toric Deligne-Mumford stack $X$. See also  \cite{Sm}.

On the other hand, almost nothing has been worked out till now regarding the 
higher K-groups of toric stacks, even when they are Deligne-Mumford stacks, 
though the higher K-groups of 
toric varieties had been well understood ({\sl cf.} \cite{VV}) and the
higher Chow groups of toric varieties have been computed recently in
\cite{Krishna}. Furthermore, we still do not know how to compute even the
Grothendieck K-theory ring of a general smooth toric stack.

One goal of this paper is to develop general techniques for computing the
(integral) higher K-theory of smooth toric stacks.
In fact, our results apply to a much bigger class of stacks than just toric
stacks. In particular, these results can be used to describe the higher
equivariant K-theory of many spherical varieties.  
Our general results are in terms of spectral sequences which we show 
degenerate in various cases of interest. This 
allows us to give explicit description of the higher K-theory of toric stacks.

As a consequence of this degeneration of 
spectral sequences, we show how one can recover 
(the integral versions of) and generalize all the
previously known computations of the Grothendieck group 
of toric Deligne-Mumford stacks. As further applications of the main results,
we completely describe the (integral) higher K-theory 
of weighted projective spaces. As another application, we give a complete
description of the higher K-theory of
toric stack bundles over a smooth base scheme.

\subsection{Overview of the main results}
The following is an overview of our main results. We shall fix a base
field $k$ throughout this text. 
A {\sl scheme} in this paper will mean a separated and reduced 
scheme of finite type over $k$. 
A {\sl linear algebraic group} $G$ over $k$ will mean a smooth and affine 
group scheme over $k$. By a closed subgroup $H$ of an algebraic group $G$,
we shall mean a morphism $H \to G$ of algebraic groups over $k$ which is a
closed immersion of $k$-schemes. In particular, a closed subgroup of a linear
algebraic group will be of the same type and hence smooth.  
An algebraic group $G$ will be called {\sl diagonalizable} if it is
a product of a split torus over $k$ and a finite abelian group of order
prime to the characteristic of $k$. In particular, we shall be dealing
with only those tori which are split over $k$. Unless mentioned otherwise,
all products of schemes will be taken over $k$.
A $G$-scheme will mean a scheme with an action of the algebraic group $G$.

For a $G$-scheme $X$, let $\bdG ^G(X)$ (resp. $\bdK ^G(X)$) denote the 
spectrum of the 
K-theory of $G$-equivariant coherent sheaves (resp. vector bundles) on $X$. 
Let $R(G)$ denote the representation ring of $G$. This is canonically
identified with $\bdK ^\bdG _0(k)$. If ${\mathfrak X}$ denotes an 
algebraic stack,
we let $\bdK ({\mathfrak X})$ ($\bdG ({\mathfrak X})$) denote the 
Quillen K-theory
(G-theory) of the exact category of vector bundles 
(coherent sheaves, respectively) on
the stack ${\mathfrak X}$. 
For a quotient stack $\mathfrak{X} = [X/G]$, the spectrum $\bdK (\mathfrak{X})$
(resp. $\bdG (\mathfrak{X})$) is canonically weakly equivalent to the 
equivariant
K-theory $\bdK ^G(X)$ (resp. G-theory $\bdG ^G(X)$) of $X$. 
See \S ~\ref{subsection:K-thry} for more details.

Recall (see below) that a {\sl stacky} toric stack $\mathfrak{X}$ is of the 
form $[X/G]$ where $X$ is a toric variety with dense torus $T$ and $G$ is a
diagonalizable group with a given morphism $\phi: G \to T$.

\vskip .3cm

Our first result is the construction of a spectral sequence which allows
one to compute the higher K-theory of the stack $[X/G]$ from 
the K-theory of the stack $[X/T]$,
whenever a torus $T$ acts on a scheme $X$ and $\phi : G \to T$ is 
a morphism of diagonalizable groups. 
This is related to the spectral sequence of Merkurjev 
(\cite[Theorem 5.3]{Merk}), whose $E_2$-terms are expressed in terms of
${\bdG}_*([X/T])$ and which converges to ${\bdG}_*(X)$ (See also \cite{Lev} for
related constructions). 

We also prove the degeneration of our spectral sequences in many cases, which
 provides an efficient tool for computing the higher K-theory of many 
quotient stacks, including toric stacks.

\begin{thm}
\label{thm:main-thm-1}
Let $T$ be a split torus acting on a scheme $X$ and let $\phi: G \to T$
be a morphism of diagonalizable groups so that $G$ acts on $X$ via $\phi$.
Then, there is a spectral sequence:
\begin{equation}\label{eqn:gen.weak.eq1}
E^{s,t}_2 = {\Tor}_{s}^{R(T)}(R(G), \bdG _t([X/T])) \Rightarrow \bdG _{s+t}([X/G]).
\end{equation}
Moreover, the edge map $\bdG _0([X/T]) {\underset {R(T)} \otimes} R(G) \to
\bdG _0([X/G])$ is an isomorphism.

The spectral sequence ~\eqref{eqn:gen.weak.eq1} degenerates at the 
$E_2$-terms if $X$ is a smooth toric variety with dense torus $T$ such that
$\bdK _0([X/T])$ is a projective $R(T)$-module and we obtain 
the ring isomorphism:
\begin{equation}\label{eqn:gen.weak.eq2}
\bdK _*([X/T]) {\underset {R(T)} \otimes} R(G) 
\xrightarrow{\cong} \bdK _*([X/G]). 
\end{equation}
In particular, this isomorphism holds when $X$ is a smooth and projective
toric variety.
\end{thm}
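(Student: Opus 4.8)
The plan is to prove degeneration by showing that the entire $s>0$ part of the $E_2$-page vanishes, and then to extract \eqref{eqn:gen.weak.eq2} from the edge homomorphism. Since $X$ is smooth and the diagonalizable groups $T,G$ are smooth, the quotient stacks $[X/T]$ and $[X/G]$ are smooth, so the comparison maps $\bdK([X/T])\to\bdG([X/T])$ and $\bdK([X/G])\to\bdG([X/G])$ are weak equivalences. I may therefore replace $\bdG$ by $\bdK$ everywhere, and it suffices to show that $E^{s,t}_2=\Tor^{R(T)}_s(R(G),\bdK_t([X/T]))=0$ for every $s>0$.

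The geometric input I would invoke is the orbit (cellular) structure of the smooth toric variety $X$. Filtering $X$ by its $T$-orbits $O_\sigma\cong T/T_\sigma$ and feeding the pieces $\bdK_*([O_\sigma/T])=\bdK_*(BT_\sigma)=R(T_\sigma)\otimes_\Z\bdK_*(k)$ into the equivariant localization sequences shows that $\bdK_*([X/T])$ is generated in degree $0$ over the coefficient ring $\bdK_*(BT)=R(T)\otimes_\Z\bdK_*(k)$, yielding a natural identification of $R(T)$-modules
\[
\bdK_t([X/T])\;\cong\;\bdK_0([X/T])\otimes_\Z\bdK_t(k),
\]
with $R(T)$ acting through the first factor. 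Granting the hypothesis that $\bdK_0([X/T])$ is a projective $R(T)$-module, I would fix a splitting realizing it as a direct summand of a free module $R(T)^{(I)}$; then each $\bdK_t([X/T])$ is a direct summand of $\bigl(R(T)\otimes_\Z\bdK_t(k)\bigr)^{(I)}$.

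The homological core is the vanishing $\Tor^{R(T)}_s\bigl(R(G),R(T)\otimes_\Z A\bigr)=0$ for all $s>0$ and every abelian group $A$. This I would prove by noting that both $R(T)=\Z[\hat T]$ and $R(G)=\Z[\hat G]$ are free $\Z$-modules: choosing an $R(T)$-free resolution $P_\bullet\to R(G)$, the complex $P_\bullet\otimes_{R(T)}(R(T)\otimes_\Z A)=P_\bullet\otimes_\Z A$ computes $\Tor^\Z_s(R(G),A)$, which vanishes for $s>0$ because $R(G)$ is $\Z$-flat. As $\Tor$ commutes with direct sums and restricts to summands, this gives $E^{s,t}_2=0$ for $s>0$; the spectral sequence collapses and the abutment reads $\bdK_*([X/G])\cong E^{0,*}_2=R(G)\otimes_{R(T)}\bdK_*([X/T])$. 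Since the spectral sequence is multiplicative and the products on $R(G)$, $\bdK_*([X/T])$ and $\bdK_*([X/G])$ are compatible, the resulting edge map is exactly the ring isomorphism \eqref{eqn:gen.weak.eq2}.

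For the final clause I would check the projectivity hypothesis when $X$ is smooth and projective. Such an $X$ carries a $T$-stable filtrable (Bialynicki--Birula) decomposition into affine cells, and repeated use of the localization sequence---together with $\bdK_{-1}(k)=0$ and the homotopy invariance $\bdK_0([\A^n/T])=R(T)$---exhibits $\bdK_0([X/T])$ as a \emph{free} $R(T)$-module on the cell classes, hence projective; the collapse and \eqref{eqn:gen.weak.eq2} then apply. The step I expect to be the main obstacle is the structural one: rigorously propagating the $R(T)$-module description from $\bdK_0$ to all $\bdK_t([X/T])$, i.e.\ establishing the displayed tensor identification by ruling out higher differentials and extension problems in the orbit-filtration spectral sequence. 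Once that is in place, the $\Tor$-vanishing and the identification of the edge map are formal.
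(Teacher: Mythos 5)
Your proposal addresses only the last third of the theorem. The first two assertions --- the existence of the spectral sequence \eqref{eqn:gen.weak.eq1} for an \emph{arbitrary} $T$-scheme $X$ and the edge-map isomorphism $\bdG_0([X/T])\otimes_{R(T)}R(G)\to\bdG_0([X/G])$ --- are never established, and they are not available off the shelf: this is not a generic ``Tor spectral sequence'' but the K{\"u}nneth/Eilenberg--Moore spectral sequence of a weak equivalence of module spectra
$\bdK^G{\overset{L}{\underset{\bdK^T}{\wedge}}}\bdG([X/T])\to\bdG([X/G])$,
which the paper proves by noetherian induction on $T$-invariant closed subschemes, using Thomason's generic slice theorem to produce a dense affine open $U\cong(U/T)\times T'$ where the claim can be checked directly, and then the localization fiber sequences. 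Even granting that equivalence, one still has to convert the resulting $\Tor$ over the graded ring $\bdK^T_*(k)$ into $\Tor$ over $R(T)$ via a base-change computation resting on $\bdK^T_*(k)\cong R(T)\otimes_{\Z}\bdK_*(k)$ and the $\Z$-flatness of $R(T)$ and $R(G)$. Without this input there is no spectral sequence to degenerate, so the argument is conditional on the hardest part of the statement.

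The degeneration clause itself is handled essentially as in the paper, and correctly. Both arguments reduce to the vanishing of higher $\Tor$, i.e.\ to showing $\bdG_*([X/T]){\overset{L}{\underset{R(T)}{\otimes}}}R(G)$ is concentrated in homological degree $0$, using exactly the three ingredients you identify: the structure result $\bdG_*([X/T])\cong\bdG_0([X/T])\otimes_{\Z}\bdK_*(k)$ (which the paper simply quotes from Vezzosi--Vistoli rather than re-deriving from the orbit filtration --- you are right that this is the delicate step, and it is legitimate to cite it), the projectivity of $\bdK_0([X/T])$ over $R(T)$, and the $\Z$-flatness of $R(G)$; your computation $\Tor^{R(T)}_s(R(G),R(T)\otimes_{\Z}A)\cong\Tor^{\Z}_s(R(G),A)=0$ for $s>0$ is sound, as is the deduction of freeness of $\bdK_0([X/T])$ for smooth projective $X$ from the Bialynicki--Birula cell decomposition, which matches the paper's lemma on $T$-cellular schemes. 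So: repair the proposal by supplying the construction of the spectral sequence (or at least the derived smash-product equivalence and the $E_2$-identification); the rest stands.
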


If $\mathfrak{X} = [X/G]$ is a generically stacky toric stack associated to 
the data $\underline{X} = (X, G \xrightarrow{\phi} T)$, 
then the above results apply to
the G-theory and K-theory of $\mathfrak{X}$. We shall apply 
Theorem~\ref{thm:main-thm-1} in Subsection~\ref{subsection:BHR}
to give an explicit presentation of the Grothendieck K-theory ring of
a smooth toric stack. If we specialize to the case of smooth toric 
Deligne-Mumford stacks, this recovers the main result of 
Borisov--Horja \cite{BH}.

Another  useful application of Theorem~\ref{thm:main-thm-1} is that
it tells us how we can read off the $T'$-equivariant G-groups of a $T$-scheme 
$X$ in terms of its  $T$-equivariant G-groups, whenever $T'$ is a 
closed subgroup of $T$. 
The special case of the isomorphism 
$\bdG _0([X/T]) {\underset {R(T)} \otimes} R(G)  
\xrightarrow{\cong} \bdG _0([X/G])$ 
when $G$ is the trivial group and 
$X$ is a smooth toric variety, recovers the main result of \cite{Moreli}.

We should also observe that Theorem~\ref{thm:main-thm-1} 
applies to a bigger class of schemes than just toric varieties.
In particular, one can use them to compute the equivariant $K$-theory
of many spherical varieties. Another special case of the isomorphism
$\bdG _0([X/T]) {\underset {R(T)} \otimes} R(G) \xrightarrow{\cong} 
\bdG _0([X/G])$ 
when $G$ is the trivial group and $X$ is a spherical variety, recovers 
the main result of \cite{Takeda}.

\vskip .3cm 

\begin{thm}\label{thm:main-thm-2}
Let $T$ be a split torus acting on a smooth and projective scheme $X$ 
which is $T$-equivariantly linear ({\sl cf.} Definition~\ref{defn:linear}). 
Let $\phi: G \to T$ be a morphism of diagonalizable groups so that $G$ acts on 
$X$ via $\phi$.
Then the map
\begin{equation}\label{eqn:main-2-0}
\rho: \bdK _0([X/G]){\underset {{\mathbb Z}} \otimes} \bdK _*(k) \cong 
\bdK _0([X/G]){\underset {R(G)} \otimes} \bdK ^\bdG _*(k) \to \bdK _*([X/G])
\end{equation}
is a ring isomorphism.
\end{thm}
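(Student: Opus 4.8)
The plan is to reduce the statement about the $G$-equivariant $K$-theory of $X$ to the torus-equivariant case covered by Theorem~\ref{thm:main-thm-1}, and then to analyze the torus case via the linearity hypothesis. First I would observe that, under the $T$-equivariant linearity assumption, $X$ admits a filtration by $T$-invariant closed subschemes whose successive quotients are $T$-equivariant vector bundles over (products of) affine spaces or points. This kind of cellular/linear structure is what forces $\bdK_*([X/T])$ to be a free module over the coefficient ring $\bdK^\bdG_*(k)$ (equivalently, over $\bdK_*(k)$ after tensoring with $R(T)$), with a basis indexed by the cells of the filtration. In particular the localization and homotopy-invariance sequences in equivariant $K$-theory should collapse, giving that the natural map
\begin{equation}\label{eqn:proposal-torus-free}
\bdK_0([X/T]) {\underset{\Z}\otimes} \bdK_*(k) \to \bdK_*([X/T])
\end{equation}
is an isomorphism of $\bdK_*(k)$-modules, and in fact of rings once one checks multiplicativity on the cellular basis.

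The second step is to pass from $T$ to $G$. Since $X$ is smooth and projective over $k$, $\bdK_0([X/T])$ is a projective (indeed finitely generated free, by linearity) $R(T)$-module, so the degeneration half of Theorem~\ref{thm:main-thm-1} applies and yields the ring isomorphism
\begin{equation}\label{eqn:proposal-base-change}
\bdK_*([X/T]) {\underset{R(T)}\otimes} R(G) \xrightarrow{\cong} \bdK_*([X/G]).
\end{equation}
Tensoring \eqref{eqn:proposal-torus-free} over $R(T)$ with $R(G)$, and using that $\bdK_0([X/T])$ is free over $R(T)$ so that base change is exact and commutes with the splitting, I would identify the left-hand side of \eqref{eqn:main-2-0} with $\bdK_*([X/T]) {\underset{R(T)}\otimes} R(G)$. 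Combining this with \eqref{eqn:proposal-base-change} then gives the desired isomorphism $\rho$. The edge-map isomorphism $\bdK_0([X/T]){\underset{R(T)}\otimes} R(G) \xrightarrow{\cong} \bdK_0([X/G])$ from Theorem~\ref{thm:main-thm-1} handles the degree-zero part and confirms that the tensor factor $\bdK_0([X/G])$ on the left of \eqref{eqn:main-2-0} is the correct one, i.e. that $\bdK_0([X/G]) \cong \bdK_0([X/T]){\underset{R(T)}\otimes} R(G)$.

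The remaining point, and the one I would be most careful about, is \emph{multiplicativity}: the maps above must be shown to be ring homomorphisms, not merely isomorphisms of graded modules. The module statement follows fairly mechanically from freeness over the cellular basis, but tracking the product structure through the filtration of $X$ and through the $R(T)$-base change to $R(G)$ requires identifying the basis classes as (push-forwards of) structure sheaves of the $T$-invariant strata and computing their products, which is where the projective-space bundle structure coming from linearity is essential. I expect the \textbf{main obstacle} to be precisely this compatibility of the cellular-basis decomposition with the ring structure on $\bdK_*([X/G])$; once \eqref{eqn:proposal-torus-free} is established as a \emph{ring} isomorphism, the base-change step and the application of Theorem~\ref{thm:main-thm-1} are formal, since both the tensor product and the isomorphism \eqref{eqn:proposal-base-change} respect multiplication.
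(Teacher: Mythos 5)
Your proposal has a genuine gap, and it occurs already in the first step. You assert that $T$-equivariant linearity of $X$ yields a filtration by $T$-invariant closed subschemes with affine successive quotients, forcing $\bdK_*([X/T])$ to be free over $\bdK^T_*(k)$ on a cellular basis. But $T$-linear (Definition~\ref{defn:linear}) is strictly weaker than $T$-cellular (Definition~\ref{defn:T-CELL}): the recursion defining linearity allows $X$ to appear as the \emph{open} member or as the ambient member of a triple $\{U,Y,Z\}$, so in general there is no filtration, no split localization sequences, and no freeness statement available (the paper proves freeness only for $T$-cellular schemes, in Lemma~\ref{lem:linear-P}). Consequently your first displayed isomorphism $\bdK_0([X/T])\otimes_{\Z}\bdK_*(k)\to\bdK_*([X/T])$ --- which is exactly the $G=T$ case of the theorem being proved --- is left unjustified; it is the heart of the matter, not a mechanical consequence of a cell structure. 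The second step compounds the problem: the degeneration half of Theorem~\ref{thm:main-thm-1} is stated and proved only for smooth toric varieties, and its extension to smooth projective $T$-linear schemes is Corollary~\ref{cor:Base-change}, which the paper deduces \emph{from} Theorem~\ref{thm:main-thm-2}; invoking it here would be circular. Finally, the point you single out as the main obstacle, multiplicativity, is essentially free, since $\rho(x\otimes y)=x\circ f^*(y)$ with $f^*$ a ring homomorphism; the real difficulty is bijectivity.

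The idea your proposal is missing is the K\"unneth decomposition of the diagonal. The paper first establishes an equivariant K\"unneth weak equivalence for products $X\times X'$ with $X$ $T$-linear (Proposition~\ref{prop:Kunneth-Linear-case}), by induction on the linear structure using localization fiber sequences of module spectra --- an argument that works for linear (not just cellular) schemes precisely because it never requires those sequences to split. On $\pi_0$ this gives $[\Delta]=\sum_i p_1^*(\alpha_i)\otimes p_2^*(\beta_i)$ in $\bdG_0([(X\times X)/G])$ (Corollary~\ref{cor:Kunneth-dec}). Surjectivity of $\rho$ then follows from $[x]=p_{1*}(\Delta\circ p_2^*([x]))$ together with the projection formula (this is where projectivity of $X$ enters) and flat base change, which exhibit the $\alpha_i$ as $\bdK^G_*(k)$-module generators of $\bdK_*([X/G])$; injectivity is obtained by writing down an explicit splitting, built from the same classes $\alpha_i,\beta_i$, of the map $x\otimes y\mapsto\bigl(x'\mapsto f_*(x'\circ x)\circ y\bigr)$. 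To salvage your route you would have to either strengthen the hypothesis from $T$-linear to $T$-cellular, or prove the $G=T$ case for linear schemes by an independent argument --- at which point you would essentially be reproducing the diagonal argument.
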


It turns out that all smooth and projective spherical varieties 
({\sl cf.} \S~\ref{subsubsection:Spherical}) satisfy  the hypothesis of 
Theorem~\ref{thm:main-thm-2}. When $X$ is a smooth projective toric variety 
and  $G = T$, then the above theorem recovers a
result of Vezzosi--Vistoli (\cite[Theorem~6.9]{VV}).
\vspace*{1cm}

As an illustration of how the spectral sequence ~\eqref{eqn:gen.weak.eq1}
degenerates in the cases not covered by Theorems~\ref{thm:main-thm-1}
and ~\ref{thm:main-thm-2}, we prove the following result which describes
the higher K-theory of toric stack bundles.

\begin{thm}\label{thm:main-thm-3}
Let $B$ be a smooth scheme over a perfect field $k$ 
and let $[X/G]$ be a toric stack where $X$ is
smooth and projective. Let $R_G\left(\bdK _*(B), \Delta \right)$ denote the
Stanley-Reisner algebra ({\sl cf.} Definition~\ref{defn:RING}) over
$\bdK _*(B)$ associated to a closed subgroup $G$ of $T$. 
Let $\pi: \mathfrak{X} \to B$ be
a toric stack bundle with fiber $[X/G]$. Then there is a ring isomorphism
\begin{equation}\label{eqn:vanish4*0}
\Phi_G : R_G\left(\bdK _*(B), \Delta \right) \xrightarrow{\cong} 
\bdK _*(\mathfrak{X}).
\end{equation}
\end{thm}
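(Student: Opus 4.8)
The plan is to realize the bundle as a quotient stack over $B$ and then run the spectral sequence of Theorem~\ref{thm:main-thm-1} relative to $B$, reducing everything to the computation of the $T$-equivariant K-theory of the underlying toric bundle. Write the toric stack bundle as $\mathfrak{X} = [\sX/G]$, where $q : \sX \to B$ is the associated (Zariski-locally trivial) toric bundle with fibre $X$ and structure torus $T$, obtained as a balanced product $\sX = P \times^{T} X$ from a $T$-torsor $P \to B$; equivalently $\sX$ is determined by combinatorial data on $B$, namely line bundles $L_1,\dots,L_n$ indexed by the rays $\rho_1,\dots,\rho_n$ of $\Delta$, via the relative Cox construction. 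The group $G \hookrightarrow T$ acts fibrewise through $\phi$. Since $\sX$ is a smooth scheme and both the $T$- and $G$-actions are fibrewise, Theorem~\ref{thm:main-thm-1} applies verbatim with $X$ replaced by $\sX$ and yields a spectral sequence
\begin{equation*}
E^{s,t}_2 = \Tor_{s}^{R(T)}\bigl(R(G),\, \bdG_t([\sX/T])\bigr) \Rightarrow \bdG_{s+t}([\sX/G]) = \bdG_{s+t}(\mathfrak{X}).
\end{equation*}
The point is that, although $\sX$ is not itself a toric variety (so the degeneration criterion of Theorem~\ref{thm:main-thm-1} does not apply directly, this being precisely the case ``not covered'' alluded to above), I expect $\bdG_t([\sX/T])$ to be a \emph{free} $R(T)$-module for every $t$, which forces the higher $\Tor$-terms to vanish and collapses the spectral sequence.

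First I would compute $\bdK_*([\sX/T]) = \bdG_*([\sX/T])$ (equality since $\sX$ is smooth) and identify it with the Stanley--Reisner algebra $R_T\bigl(\bdK_*(B),\Delta\bigr)$; this is the universal ($G = T$) case of the theorem and is the heart of the argument. To do so I would use the $T$-invariant stratification of the fibre $X$ coming from a generic one-parameter subgroup (the Bia\l ynicki--Birula cells, indexed by the cones of $\Delta$), which globalizes to a filtration of $\sX$ whose successive quotients are equivariant affine-space bundles over locally closed subsets of $B$. The localization long exact sequences in equivariant K-theory, together with homotopy invariance applied to the affine-space bundles and the projective/affine-bundle formula, then express $\bdK_*([\sX/T])$ as a free $\bdK_*(B)$-module with basis indexed by the cones of $\Delta$, and simultaneously as a free $R(T)$-module. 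The multiplicative structure is read off from the classes $[\sO(\sD_i)] \in \bdK_0([\sX/T])$ of the relative toric boundary divisors $\sD_i \subset \sX$ attached to the rays $\rho_i$ (twisted by the $L_i$): these satisfy exactly the Stanley--Reisner relations of $\Delta$ (a product over a non-face vanishes because the corresponding relative divisors have empty common intersection), which identifies the ring $\bdK_*([\sX/T])$ with $R_T(\bdK_*(B),\Delta)$.

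Granting this, freeness of each $\bdG_t([\sX/T])$ over $R(T)$ gives $\Tor_s^{R(T)}(R(G),-) = 0$ for $s > 0$, so the spectral sequence degenerates at $E_2$ and the edge map is an isomorphism
\begin{equation*}
\bdK_*(\mathfrak{X}) \;\cong\; R(G) \underset{R(T)}{\otimes} \bdK_*([\sX/T]) \;\cong\; R(G) \underset{R(T)}{\otimes} R_T\bigl(\bdK_*(B),\Delta\bigr) \;\cong\; R_G\bigl(\bdK_*(B),\Delta\bigr),
\end{equation*}
the last identification being the definition of the Stanley--Reisner algebra attached to $G$ as the base change along $R(T) \to R(G)$ of the one attached to $T$. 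Multiplicativity of $\Phi_G$ follows from the multiplicativity of the edge map in Theorem~\ref{thm:main-thm-1}, so that $\Phi_G$ is the ring map sending the polynomial generators to the relative divisor classes $[\sO(\sD_i)]$ and is a ring isomorphism. The main obstacle I anticipate is the middle step: proving that $\bdK_*([\sX/T])$ is a free $R(T)$-module and carries the Stanley--Reisner presentation \emph{over the nontrivial base} $B$. One must globalize the toric cellular arguments over $B$, control the multiplicative structure relative to $\bdK_*(B)$ (rather than over a point, where one could simply invoke Vezzosi--Vistoli \cite[Theorem~6.9]{VV} or Theorem~\ref{thm:main-thm-2}), and verify that the higher K-groups $\bdK_*(B)$ enter as coefficients exactly as dictated by the Stanley--Reisner algebra; the freeness over $R(T)$, which drives the degeneration, is the crucial technical input.
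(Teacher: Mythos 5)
Your overall strategy --- realize $\mathfrak{X}$ as $[\sX/G]$ for the \emph{fiberwise} $T$-action on the toric bundle $\sX = E\stackrel{T}{\times}X$ and run the spectral sequence of Theorem~\ref{thm:main-thm-1} in that setting --- is a legitimate and genuinely different route from the paper's (which stratifies $B$ into smooth pieces over which the bundle trivializes and inducts via the stacky Leray--Hirsch theorem, Theorem~\ref{thm:LHT}). But there is a gap at the decisive last step, the chain
\[
R(G)\underset{R(T)}{\otimes}\bdK_*([\sX/T])\;\cong\;R(G)\underset{R(T)}{\otimes}R_T\bigl(\bdK_*(B),\Delta\bigr)\;\cong\;R_G\bigl(\bdK_*(B),\Delta\bigr),
\]
whose second isomorphism you call ``the definition.'' It is not. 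In Definition~\ref{defn:RING} the ideal $J^G_\Delta$ is generated by $t_{\chi'_i}-r_i$ with $r_i=\zeta_{\chi'_i}$ the class of the line bundle on $B$ attached to the torsor $E$ and the character $\chi'_i\in (T/G)^{\vee}$, whereas base change of $R_T(\bdK_*(B),\Delta)$ along $R(T)\to R(G)$ for the \emph{standard} $R(T)$-structure $\chi\mapsto t_\chi$ imposes $t_{\chi'_i}=1$, i.e.\ gives the answer for the trivial bundle. These differ: for $X=\P^1$, $G=\{1\}$, $\sX=\P(\sO_B\oplus\sL)$, your formula returns $\bdK_*(B)[t^{\pm1}]/((t-1)^2)$ independently of $\sL$, which is false. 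What your argument actually needs is that the $R(T)$-module structure on $\bdK_*([\sX/T])$ coming from the fiberwise action is \emph{twisted}: since $E(L_\chi)\cong \ov{p}^*(\zeta_\chi)$ with $T$ acting by $\chi$ on the fibers (this is the content of ~\eqref{eqn:vanish3}), the character $\chi$ acts under the identification with $R_T(\bdK_*(B),\Delta)$ by $t_\chi\cdot \ov{p}^*[\zeta_\chi]^{-1}$, not by $t_\chi$. Only after this computation does the base change produce the correct units $r_i$ of ~\eqref{eqn:Reln2}; without it the proposal computes the wrong ring. This twist is exactly the geometric input that the paper's definition of $R_G(\bdK_*(B),\Delta)$ encodes, so it cannot be absorbed into a ``definition.''

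Two further points. First, $\bdG_t([\sX/T])$ is in general \emph{not} free (nor flat) over $R(T)$: Leray--Hirsch identifies it with $\bdK_t(B)\otimes_{\Z}(\text{free }R(T)\text{-module})$, and $\bdK_t(B)$ may have $\Z$-torsion. The degeneration you want still holds because $\Tor_s^{R(T)}\bigl(R(G),M\otimes_{\Z}R(T)\bigr)\cong \Tor_s^{\Z}(R(G),M)=0$ for $s>0$ as $R(G)$ is $\Z$-free, but that argument must replace the freeness claim (note also that the degeneration criterion proved in Theorem~\ref{thm:main-thm-1} does not apply, since $\sX$ is not a toric variety for the fiberwise action). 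Second, the identification of $\bdK_*([\sX/T])$ with $R_T(\bdK_*(B),\Delta)$ as a $\bdK_*(B)$-algebra is itself the content of Lemma~\ref{lem:T-case} and rests on the full Leray--Hirsch argument over the stratified base, including the compatibility of the localization boundary maps with multiplication by the generating classes (the Leibniz-rule and projection-formula computations in the proof of Theorem~\ref{thm:LHT}); most of the real work in the paper's proof lives there, so this step should not be treated as routine bookkeeping.
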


When $G$ is the trivial group, the Grothendieck group 
$\bdK _0(\mathfrak{X})$, was computed in \cite[Theorem~1.2(iii)]{SU}. When
$[X/G]$ is a Deligne-Mumford stack, a computation of $\bdK _0(\mathfrak{X})$ 
appears in \cite{JTseng2}. 

The focus of this paper was to describe the higher K-theory
of toric stacks. Similar description of the motivic cohomology 
(higher Chow groups) of such stacks will appear in \cite{JK}.

\vskip .3cm

Here is an {\it outline of the paper}. The second section is a review of toric
stacks and their K-theory. 
In \S~\ref{section:ELin}, we define the notion of equivariantly linear
schemes and study their G-theory.  We prove 
Theorem~\ref{thm:main-thm-1} in \S~\ref{section:Gen}, which is the most general result of this paper.
We conclude this section with a detailed description of the Grothendieck 
K-theory ring of general smooth toric stacks.

In \S~\ref{section:Kunneth}, we prove a derived K{\"u}nneth formula 
and prove Theorem~\ref{thm:main-thm-2} as a consequence.
We conclude this section by working out the higher K-theory of (stacky) 
weighted projective spaces. 
We study the K-theory of toric stack bundles over smooth base schemes
in the last two sections and conclude by providing a 
complete determination of these.

\section{A review of toric stacks and their K-theory}
\label{section:T-stacks}
In this section, we review the concept of toric stacks from \cite{GSI} and
set up the notations for the G-theory and K-theory of such stacks. This is done
in some detail for the convenience of the reader.

In what follows, we shall fix a base field $k$ and all schemes and algebraic
groups will be defined over $k$. Let ${\sV}_k$ denote the category of $k$-schemes and let $\sV^S_k$ denote the 
full subcategory of smooth $k$-schemes. If $G$ is an algebraic group over $k$,
we shall denote the category of $G$-schemes with $G$-equivariant maps by 
$\sV_G$. The full subcategory of smooth $G$-schemes will be denoted by 
$\sV^S_G$.

\subsection{Toric stacks}\label{subsection:TStacks-def}
\begin{defn} 
\label{toric.stacks.def}
Let $T$ be a torus and let $X$ be a toric variety with dense torus $T$.
According to \cite{GSI},
a {\sl toric stack} $\mathfrak{X}$ is an Artin stack of the form $[X/G]$ 
where $G$ is a subgroup of $T$.

A {\sl generically stacky toric stack} is an 
Artin stack of the form $[X/G]$ where $G$ is a diagonalizable group with a 
morphism $\phi: G \to T$. In this case, the stack $\mathfrak{X}$ has an open 
substack of the form $[T/G]$ which acts on it. 
The action of $\mathfrak{T} = [T/G]$
on $\mathfrak{X}$ is induced from the torus action on $X$. The stack
$\mathfrak{T}$ is often called the {\sl stacky} dense torus of $\mathfrak{X}$.
A generically stacky toric stack $[X/G]$ as above will often be described by the
data $\underline{X} = (X, G \xrightarrow{\phi} T)$. 
\end{defn}

\begin{exms}
Generically stacky toric stacks arise  naturally while one studies
toric stacks. This is because a toric variety $X$ with dense torus $T$
has many $T$-invariant subvarieties which are toric varieties and whose
dense tori are quotients of $T$. If $Z \subsetneq X$ is such a subvariety,
and $G$ is a diagonalizable subgroup of the torus $T$, 
then $[Z/G]$ is not a toric stack but only a generically stacky toric stack.

A (generically stacky) toric stack $\mathfrak{X}$ is called a 
{\sl toric Deligne-Mumford stack} if
it is a Deligne-Mumford stack after forgetting the toric structure.    
It is called smooth if $X$ is a smooth scheme.
As pointed out in the introduction, Deligne-Mumford toric stacks were
introduced for the first time in \cite{BCS} using the notion of stacky
fans. A geometric description of the stacks considered in
\cite{BCS} was given in \cite{FMN} where many nice properties of such stacks
were proven. It turns out that all these stacks are special cases of the
ones defined above.

One extreme case of a toric stack is when $G$ is the trivial group,  
in which case
$\mathfrak{X}$ is just a toric variety. The other extreme case is when 
$G$ is all of $T$: clearly such toric stacks are Artin. Toric stacks of this 
form were considered
before by Lafforgue \cite{Laf}. In general, a toric stack occupies a place 
between these two extreme cases.
If $\mathfrak{X}$ is a toric Deligne-Mumford
stack, then the stacky torus $\mathfrak{T}$ is of the form $T' \times 
\mathfrak{B}_{\mu}$, where $T'$ is a torus and $\mathfrak{B}_{\mu}$ is the 
classifying stack of a finite abelian group $\mu$.

In general, every generically stacky toric stack can be written in the
form $\mathfrak{X} \times \mathfrak{B}_G$, where $\mathfrak{X}$ is a toric
stack and $\mathfrak{B}_G$ is the classifying stack of a diagonalizable 
group $G$. This decomposition often reduces the study of the
cohomology theories of generically stacky toric stacks
to the study the cohomology theories of toric stacks and 
the classifying stacks of diagonalizable groups.  

The coarse moduli space $\pi: \mathfrak{X} \to \ov{X}$ of a Deligne-Mumford
toric stack is a simplicial toric variety whose dense torus is the moduli
space of $\mathfrak{T}$. Conversely, every simplicial toric variety is the
coarse moduli space of a canonically defined toric Deligne-Mumford
stack ({\sl cf.} \cite[\S~4.2]{FMN}). 
\end{exms}

\subsection{Toric stacks via stacky fans}\label{subsection:Fan}
In \cite{GSI}, Geraschenko and Satriano showed that all (generically
stacky) toric stacks
are obtained from {\sl stacky fans} in much the same way toric varieties
are obtained from fans. They describe in detail the dictionary between
toric stacks and stacky fans.
 
Associated to the toric variety $X$ is a fan $\Sigma$ on the lattice of 
1-parameter subgroups of $T$, $L=\Hom_{\rm gp}(\G_m,T)$ 
(see \cite[\S 1.4]{fulton} or \cite[\S 3.1]{cls}). 
The surjection of tori $T\to T/G$ corresponds to the
homomorphism of lattices of 1-parameter subgroups, 
$\beta\colon L\to N=\Hom_{\rm gp}(\G_m,T/G)$. The dual homomorphism, 
$\beta^*\colon \hom(N,\Z)\to \hom(L,\Z)$, is the induced homomorphism of 
characters. Since $T\to T/G$ is surjective, $\beta^*$ is injective, 
and the image of $\beta$ has finite index. 
Therefore, one may define a \emph{stacky fan} as a pair $(\Sigma,\beta)$, 
where $\Sigma$ is a fan on a lattice $L$, and $\beta\colon L\to N$ is a 
homomorphism to a lattice $N$ such that $\beta(L)$ has finite index in $N$.  
Conversely, any stacky fan $(\Sigma,\beta)$ gives rise to a  
toric stack as follows.

Let $X_\Sigma$ be the toric variety associated to $\Sigma$. The dual of 
$\beta$, $\beta^*\colon N^{\vee} \to L^{\vee}$, induces a homomorphism of tori 
$T_\beta\colon T_L\to T_N$, naturally identifying $\beta$ with the induced map 
on lattices of 1-parameter subgroups. Since $\beta(L)$ is of finite index in 
$N$, $\beta^*$ is injective, so $T_\beta$ is surjective. 
Let $G_\beta=\ker(T_\beta)$. Note that $T_L$ is the torus of $X_\Sigma$ and 
$G_\beta\subseteq T_L$ is a subgroup. If $(\Sigma,\beta)$ is a stacky fan, the 
associated  toric stack $\mathfrak{X}_{\Sigma,\beta}$ is defined to be 
$[X_\Sigma/G_\beta]$, with the torus $T_N=T_L/G_\beta$.

A \emph{generically stacky fan} is a pair $(\Sigma,\beta)$, where $\Sigma$ is 
a fan on a lattice $L$, and $\beta \colon L\to N$ is a homomorphism to a 
finitely generated abelian group.
If $(\Sigma, \beta)$ is a generically stacky fan, the associated 
generically stacky toric stack $\mathfrak{X}_{\Sigma,\beta}$ is defined to be 
$[X_\Sigma/G_\beta]$, where the action of $G_\beta$ on $X_\Sigma$ is induced by 
the homomorphism $G_\beta\to D(L^*)=T_L$.

One can give a more explicit description of $\mathfrak{X}_{\Sigma,\beta}$ 
considered above which will show that it is a generically stacky 
toric stack. 
Let $(\Sigma,\beta\colon L\to N)$ be a generically stacky fan and 
let $C(\beta)$ denote the complex $L \xrightarrow{\beta} N$.
Let
 \[
 \Z^s\xrightarrow Q \Z^r\to N\to 0
 \]
be a presentation of $N$, and let $B\colon L\to \Z^r$ be a lift of $\beta$
(which exists).
One defines the fan $\Sigma'$ on $L\oplus \Z^s$ as follows. Let $\tau$ be the 
cone generated by $e_1,\dots, e_s\in \Z^s$. For each 
$\sigma\in \Sigma$, let $\sigma'$ be the cone spanned by $\sigma$ and $\tau$ 
in $L\oplus \Z^s$. Let $\Sigma'$ be the fan generated by all the $\sigma'$. 
Corresponding to the cone $\tau$, we have the closed subvariety 
$Y\subseteq X_{\Sigma'}$, which is isomorphic to 
$X_\Sigma$ since $\Sigma$ is the \emph{star} (sometimes called the \emph{link}) 
of $\tau$ \cite[Proposition 3.2.7]{cls}. One defines 
\[
\xymatrix@R-2pc @C-1pc{
 \llap{$\beta'=B\oplus Q\colon\,$}L\oplus \Z^s\ar[r] & \Z^r\\
 (l,a)\ar@{|->}[r] & B(l)+Q(a).}
\]
Then $(\Sigma',\beta')$ is a generically stacky fan and we see that 
$\mathfrak{X}_{\Sigma,\beta}\cong [Y/G_{\beta'}]$. Note that $C(\beta')$ is 
quasi-isomorphic to $C(\beta)$, so $G_{\beta'}\cong G_\beta$.

Toric stacks and generically stacky toric stacks arise naturally, 
especially in the solution of certain moduli problems. Any toric variety
naturally gives rise to a toric stack. In fact, it is shown in 
\cite[Theorem~6.1]{GSII} that if $k$ is algebraically closed 
field of characteristic zero, then every Artin stack with a dense open torus
substack is a toric stack under certain fairly general conditions.
We refer the readers to \cite{GSI} where many examples of toric and
generically stacky toric stacks are discussed. 

\vskip .3cm

{\sl In the rest of this paper, a {\sl toric stack} will always mean any 
generically stacky toric stack. A toric stack as in Definition ~\ref{toric.stacks.def} will be
called a reduced toric stack or a toric orbifold}.

\vskip .3cm

\subsection{K-theory of quotient stacks}\label{subsection:K-thry}
Let $G$ be a linear algebraic group acting on a scheme $X$.
The spectrum of the K-theory of $G$-equivariant coherent sheaves 
(resp. vector bundles) on $X$ is denoted by $\bdG ^G(X)$ (resp. $\bdK ^G(X)$).
We will let $\bdK ^G$ denote $\bdK ^G(Spec \, k)$.
The direct sum of the homotopy groups of these spectra are denoted by 
$\bdG ^G_*(X)$ and $\bdK ^G_*(X)$. The latter is a graded ring. The natural
map $\bdK ^G(X) \to \bdG ^G(X)$ is a weak equivalence if $X$ is smooth.
For a quotient stack $\mathfrak{X}$ of the form $[X/G]$, one writes
$\bdK ^G(X)$ and $\bdK (\mathfrak{X})$ interchangeably. The ring 
$\bdK ^G_0(k)$ will be
denoted by $R(G)$. This is same as the representation ring of $G$.

The functor $X \mapsto \bdG ^G(X)$ on $\sV_G$ is covariant for proper maps and
contravariant for flat maps. It also satisfies the localization sequence
and the projection formula.
It satisfies the homotopy invariance property in the sense that if
$f: V \to X$ is a $G$-equivariant vector bundle, then the map 
$f^*: \bdG ^G(X) \to \bdG ^G(V)$ is a weak equivalence.
The functor $X \mapsto \bdK ^G(X)$ on $\sV_G$ is a contravariant
functor with values in commutative graded rings.
For any $G$-equivariant morphism $f: X \to Y$, $\bdG ^G(X)$ is a module
spectrum over the ring spectrum $\bdK ^G(Y)$. In particular, $\bdG ^G_*(X)$ is
an $R(G)$-module. We refer to \cite[\S~1]{Thomason1} 
to verify the above properties.

\section{Equivariant G-theory of linear schemes}\label{section:ELin}
We will prove Theorem~\ref{thm:main-thm-1} as a consequence of a more
general result (Theorem~\ref{thm:main-thm-1*}) on the equivariant G-theory
of schemes with a group action. 
In this section, we study the equivariant G-theory
of a certain class of schemes which we call equivariantly linear. 
Such schemes in
the non-equivariant set-up were earlier considered by Jannsen \cite{Jan}
and Totaro \cite{Totaro1}. The G-theory of such schemes in the
non-equivariant set-up was studied in \cite{J01}.
We  end this section with a proof of Theorem~\ref{thm:main-thm-1*}
for equivariantly linear schemes.

\begin{defn}\label{defn:linear}
Let $G$ be a linear algebraic group over $k$ and let $X \in \sV_G$.
\begin{enumerate}
\item
We will say  $X$ is $G$-equivariantly $0$-linear if it is either empty
or isomorphic to $\Spec({\rm Sym}(V^*))$ where $V$ is a finite-dimensional
rational representation of $G$.
\item
For a positive integer $n$, we will say that $X$ is $G$-equivariantly 
$n$-linear if there exists a family of objects $\{U, Y, Z\}$ in
$\sV_G$ such that $Z \subseteq Y$ is a $G$-invariant closed immersion 
with $U$ its complement, $Z$ and one of the schemes $U$ or $Y$ are
$G$-equivariantly $(n-1)$-linear and $X$ is the other member of
the family $\{U, Y, Z\}$.
\item
We will say that $X$ is {\it $G$-equivariantly linear (or simply, $G$-linear)}
if it is $G$-equivariantly $n$-linear for some $n \ge 0$.
\end{enumerate}
\end{defn}

It is immediate from the above definition that if $G \to G'$ is
a morphism of algebraic groups then every $G'$-equivariantly linear scheme is 
also $G$-equivariantly linear.

\begin{defn}\label{defn:T-CELL}
Let $G$ be a linear algebraic group over $k$. 
A scheme $X \in \sV_G$ is called {\sl $G$-equivariantly cellular} 
(or, $G$-cellular) if there is a filtration
\[
\emptyset = X_{n+1} \subsetneq X_n \subsetneq \cdots \subsetneq X_1
\subsetneq X_0 = X
\]
by $G$-invariant closed subschemes such that each
$X_i \setminus X_{i+1}$ is isomorphic to a rational representation 
$V_i$ of $G$. These representations of $G$ are called the (affine) 
$G$-{\em cells} of $X$.
\end{defn}
It is obvious that a $G$-equivariantly cellular scheme is cellular in the 
usual sense ({\sl cf.} \cite[Example~1.9.1]{Fulton1}).

Before we collect examples of equivariantly linear schemes,
we state the following two elementary results which will be used throughout 
this paper.

\begin{lem}\label{lem:elem}
Let $G$ be a diagonalizable group over $k$ and let $H \subseteq G$ be a 
closed subgroup. Then $H$ is also defined over $k$ and is diagonalizable. 
If $T$ is a split torus over $k$, then
all subtori and quotients of $T$ are defined over $k$ and are split over $k$.
\end{lem}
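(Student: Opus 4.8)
The plan is to prove \lemref{lem:elem} by exploiting the anti-equivalence between the category of diagonalizable groups over $k$ and the category of finitely generated abelian groups (with the torus case corresponding to finitely generated \emph{free} abelian groups), given by $G \mapsto \wh{G} = \Hom_{\rm gp}(G, \G_m)$, the character group. Under this duality, a closed immersion $H \inj G$ of diagonalizable groups corresponds to a surjection $\wh{G} \surj \wh{H}$ of abelian groups, and conversely every homomorphism of abelian groups comes from a unique homomorphism of diagonalizable groups. So the strategy is to translate each assertion about groups into the corresponding statement about their character lattices, where it becomes elementary algebra.

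\textbf{First part.} To show that a closed subgroup $H \subseteq G$ of a diagonalizable group is again diagonalizable and defined over $k$: the inclusion gives a surjection of character groups $\wh{G} \surj \wh{H}$. Since $G$ is diagonalizable, $\wh{G}$ is a finitely generated abelian group (with the torsion of order prime to $\Char(k)$, matching the paper's convention that diagonalizable means a split torus times a finite abelian group of order prime to the characteristic). A quotient of a finitely generated abelian group is again finitely generated, and its torsion subgroup is a quotient of the torsion of $\wh{G}$, hence again of order prime to $\Char(k)$. Therefore $\wh{H}$ corresponds to a diagonalizable group over $k$, and by uniqueness of the duality this group is $H$ itself; in particular $H$ is diagonalizable and defined over $k$.

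\textbf{Second part.} For a split torus $T$, the character group $\wh{T}$ is a finitely generated \emph{free} abelian group, say $\wh{T} \cong \Z^n$. A subtorus $T' \subseteq T$ corresponds to a surjection $\Z^n \surj \wh{T'}$; since the image defines a torus, $\wh{T'}$ is free (I would note that a subtorus, being itself a torus, has torsion-free character group, or equivalently argue that the quotient is the character group of a subtorus precisely when it is free). A quotient torus $T \surj T''$ corresponds dually to an injection $\wh{T''} \inj \Z^n$, and a subgroup of a free abelian group is free. In either case the character group is free and finitely generated, so the group is a split torus over $k$, and by the duality it is defined over $k$.

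\textbf{Main obstacle.} The genuinely content-bearing point is the correctness and applicability of the contravariant equivalence of categories over a general (not necessarily algebraically closed) base field $k$, together with the bookkeeping that the prime-to-$\Char(k)$ condition on torsion is preserved under passage to quotients of character groups. Once that dictionary is in place, every step is formal. I expect the cleanest exposition to simply cite the standard duality theory for diagonalizable group schemes (e.g.\ SGA 3 or the relevant reference used elsewhere in the paper) and then verify the two elementary facts — that quotients of finitely generated abelian groups with prime-to-$p$ torsion again have prime-to-$p$ torsion, and that subgroups and appropriate quotients of free abelian groups are free — rather than reproving the duality itself.
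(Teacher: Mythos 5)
Your overall strategy --- translating everything through the anti-equivalence $G\mapsto \wh{G}$ between diagonalizable groups and finitely generated abelian groups --- is in substance the same as the paper's, which simply cites Borel, \emph{Linear Algebraic Groups}, Proposition~8.2 and Corollary~8.2 (precisely this duality theory) for both assertions. Your second part (subtori and quotients of a split torus) is fine: character groups of quotients are subgroups of $\Z^n$, hence free, and character groups of subtori are torsion-free quotients, and freeness plus triviality of the Galois action gives splitness over $k$.

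There is, however, a genuine gap in your first part. The claim that the torsion subgroup of a quotient of $\wh{G}$ is a quotient of the torsion subgroup of $\wh{G}$ is false: the surjection $\Z \surj \Z/p$ has target with torsion $\Z/p$ while the source is torsion-free. This is not a cosmetic slip, because in characteristic $p$ the conclusion you are trying to reach genuinely fails for closed subgroup \emph{schemes}: $\mu_p \subset \G_m$ is a closed subgroup scheme whose character group $\Z/p$ has $p$-torsion, so $\mu_p$ is not diagonalizable in the paper's restrictive sense (split torus times finite abelian group of order prime to $\Char(k)$). The missing input is the paper's standing convention that a closed subgroup is by definition smooth (\S 1.1: ``a closed subgroup of a linear algebraic group will be of the same type and hence smooth''). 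For a diagonalizable group scheme $D(M)$ in characteristic $p$, smoothness is equivalent to $M$ having no $p$-torsion, and this --- not any structural property of quotients of abelian groups --- is what forces the torsion of $\wh{H}$ to have order prime to $\Char(k)$. With that substitution (and nothing to check in characteristic zero) your argument closes up and recovers the content of the Borel citations the paper relies on.
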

\begin{proof}
The first statement follows from \cite[Proposition~8.2]{Borel}.
If $T$ is a split torus over $k$, then any of its subgroups is defined over 
$k$ and is split by the first assertion. In particular, all quotients of $T$ 
are defined over $k$. Furthermore, all such quotients are split over $k$
by \cite[Corollary~8.2]{Borel}.
\end{proof}

\begin{lem}\label{lem:Open-orbit}
Let $T$ be a split torus acting on a scheme $X$ with finitely many
orbits. Then:
\begin{enumerate}
\item
Any $T$-orbit in $X$ of minimal dimension is closed.
\item
Any $T$-orbit in $X$ of maximal dimension is open.
\end{enumerate}
\end{lem}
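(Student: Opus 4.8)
The plan is to reduce everything to a few standard facts about the action of the irreducible group $T$ and then exploit the finiteness of the orbit set. Since $T$ is a split torus, it is geometrically irreducible, so for each point $x \in X$ the orbit $O = T\cdot x$, being the image of the irreducible scheme $T$ under the orbit map $t \mapsto t\cdot x$, is irreducible and locally closed in $X$. Its closure $\overline{O}$ is $T$-invariant (the closure of a $T$-invariant set is $T$-invariant), irreducible with $\dim \overline{O} = \dim O$, and $O$ is open and dense in $\overline{O}$. Consequently the boundary $\del O := \overline{O}\setminus O$ is a closed, $T$-invariant subset of dimension strictly less than $\dim O$, hence a union of orbits each of dimension $< \dim O$ (cf. \cite{Borel}). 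I would record these facts first, as both parts rest on them.

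For part (1), let $O$ be an orbit of minimal dimension and suppose it is not closed. Then $\del O = \overline{O}\setminus O$ is nonempty, and by the facts above it is a union of orbits each of dimension strictly smaller than $\dim O$, contradicting the minimality of $\dim O$. Hence $\overline{O} = O$ and $O$ is closed.

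For part (2), let $O$ be an orbit of maximal dimension $d$; I will show that $X\setminus O$ is closed. Because there are only finitely many orbits $O = O_0, O_1, \dots, O_r$, it suffices to prove $\overline{O_i}\cap O = \emptyset$ for every $i \ge 1$, since then $X\setminus O = \bigcup_{i\ge 1}\overline{O_i}$ is a finite union of closed sets. The key step is: if $\overline{O_i}$ meets $O$, then, being $T$-invariant and containing a point of the single orbit $O$, it contains all of $O$; thus $\overline{O}\subseteq \overline{O_i}$, whence $d = \dim O \le \dim O_i \le d$, forcing $\dim O_i = d$, and since both are irreducible of dimension $d$ with $\overline{O}\subseteq\overline{O_i}$, forcing $\overline{O} = \overline{O_i}$. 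Then $O$ and $O_i$ are nonempty open subsets of the irreducible space $\overline{O}$, so they intersect, and being orbits they coincide, i.e. $i = 0$. This establishes the claim and hence that $O$ is open.

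The main obstacle is the argument in part (2): part (1) is essentially immediate once the boundary facts are in place, whereas (2) genuinely needs both the maximality of the dimension (to upgrade the inclusion $\overline{O}\subseteq\overline{O_i}$ to an equality of irreducible closed sets via a dimension count) and the finiteness of the orbit set (to conclude that the complement of $O$, a priori only a union of locally closed orbits, is actually closed). I would also note that since openness and closedness may be checked after the faithfully flat base change to $\ov{k}$, under which a split torus remains a torus, there is no loss in carrying out the topological argument geometrically.
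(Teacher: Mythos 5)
Your proposal is correct, and both parts rest on the same standard facts the paper also invokes for part (1) (namely \cite[Proposition~1.8]{Borel}: orbits are locally closed, open and dense in their irreducible closures, with boundary a union of strictly lower-dimensional orbits). For part (2), however, you take a genuinely different route from the paper. The paper's argument is scheme-theoretic: it forms the inertia group scheme $S \to X$, applies Chevalley's semicontinuity theorem to conclude that the locus $X_{\le s}$ of points with stabilizer of dimension $\le s$ is open, uses the commutativity of $T$ to see that the minimal stabilizer dimension is attained exactly on the union of the maximal-dimensional orbits, and then feeds part (1) back in: inside the open set $X_{\le s}$ all orbits have the same (hence minimal) dimension, so they are closed there, and a finite disjoint union of closed orbits forces each to be open. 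Your argument is purely topological: you show directly that the complement of a maximal orbit $O$ is the finite union $\bigcup_{i\ge 1}\overline{O_i}$ of the closures of the remaining orbits, the key point being that if $\overline{O_i}$ met $O$ it would contain $\overline{O}$, and the dimension count $d \le \dim O_i \le d$ together with irreducibility would force $\overline{O_i} = \overline{O}$ and then $O_i = O$. Your version is more elementary (no inertia scheme, no Chevalley) and nowhere uses that $T$ is abelian, so it proves the statement for an arbitrary algebraic group acting with finitely many orbits; the paper's version has the side benefit of exhibiting the open stratification of $X$ by stabilizer dimension, which is a useful object in its own right (cf.\ the reference to \cite[\S~2.2]{VV}). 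Your closing remark about checking openness and closedness after base change to $\ov{k}$ is the right way to justify working with geometric orbits, and is a point the paper leaves implicit.
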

\begin{proof}
The first assertion is well known and can be found in 
\cite[Proposition~1.8]{Borel}. We prove the second assertion.

Let $f:S \to X$ be the inertia group scheme over $X$ for the $T$-action and let
$\gamma: X \to S$ denote the unit section. Then for a point $x \in X$,
the fiber $S_x$ of the map $f$ is the stabilizer subgroup of $x$ and the 
dimension of $S_x$ is its dimension at the point $\gamma(x)$. 
For any $s \ge 0$, let $X_{\le s}$
denote the set of points $x \in X$ such that $\dim(S_x) \le s$.
It follows from Chevalley's theorem ({\sl cf.} \cite[\S~13.1.3]{EGAIV})
that each $X_{\le s}$ is open in $X$ (see also \cite[\S~2.2]{VV}).

Let $U \subseteq X$ denote a $T$-orbit of maximal dimension (say, $d$) and let
$x \in U$. Suppose $s = \dim(S_x) \ge 0$. Notice that all points in a $T$-orbit
have the same stabilizer subgroup because $T$ is abelian.
We claim that there is no point
on $X$ whose stabilizer subgroup has dimension less than $s$.
If there is such a point $y \in X$, then $Ty$ is a $T$-orbit of
$X$ of dimension bigger than the dimension of $U$, contradicting our
choice of $U$. This proves the claim.  

It follows from this claim that 
$X_{\le s}$ is a $T$-invariant open subscheme of 
$X$ which is a disjoint union of its $T$-orbits
such that the stabilizer subgroups of all points of $X_{\le s}$ have
dimension $s$. In particular, all $T$-orbits in  $X_{\le s}$ have
dimension $d$. 
We conclude from the first assertion of the lemma that
all $T$-orbits (of closed points) in $X_{\le s}$ (including $U$) are closed in 
$X_{\le s}$. Since there only finitely many orbits in $X$, the same is true
for $X_{\le s}$. We conclude that $X_{\le s}$ is a finite disjoint
union of its closed orbits. Hence these orbits must also be open in $X_{\le s}$.
In particular, $U$ is open in $X_{\le s}$ and hence in $X$.
\end{proof} 

\begin{remk}\label{remk:Gen-diag}
The reader can verify that Lemma~\ref{lem:Open-orbit} is true for the action
of any diagonalizable group. But we do not need this general case.
\end{remk}

The following result yields many examples of equivariantly linear
schemes.

\begin{prop}\label{prop:lin-elem}
Let $T$ be a split torus over $k$ and let $T'$ be a quotient of $T$. 
Let $T$ act on $T'$ via the quotient map. Then the following hold.
\begin{enumerate}
\item
$T'$ is $T$-linear.
\item
A toric variety with dense torus $T$ is $T$-linear.
\item
A $T$-cellular scheme is $T$-linear. 
\item
If $k$ is algebraically closed, then every $T$-scheme with finitely many
$T$-orbits is $T$-linear.
\end{enumerate}
\end{prop}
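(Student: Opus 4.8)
My plan is to prove the four statements essentially in the order given, since each one tends to feed the next. The unifying idea is that $T$-linearity is built inductively out of affine $T$-cells (rational representations), so in each case I want to produce a stratification by $T$-invariant locally closed pieces, each of which is a rational $T$-representation, and then assemble them using the recursive structure in Definition~\ref{defn:linear}(2). The key bookkeeping point I will use repeatedly is that if $Z \subseteq Y$ is a $T$-invariant closed immersion with open complement $U$, and two of $\{U,Y,Z\}$ are $T$-linear, then so is the third; in particular an iterated filtration with $T$-linear strata assembles to a $T$-linear total space.

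First I would dispatch (1). Here $T'$ is a split torus (by Lemma~\ref{lem:elem}) on which $T$ acts through the surjection $T \to T'$. I would argue by induction on $\dim T'$. A split torus of dimension $n$ is $\mathbb{G}_m^n$, and I would stratify $\mathbb{G}_m = \mathbb{A}^1 \setminus \{0\}$ with $\mathbb{A}^1$ as a rational representation (hence $T$-equivariantly $0$-linear) and the fixed point $\{0\}$ also $0$-linear; this makes $\mathbb{G}_m$ $T$-linear by part (2) of the definition. For higher rank I would peel off one $\mathbb{G}_m$ factor at a time, using that a product of a $T$-linear scheme with $\mathbb{A}^1$ or with $\mathbb{G}_m$ remains $T$-linear (the open/closed decomposition $\mathbb{G}_m \hookleftarrow \mathbb{A}^1 \hookrightarrow \{0\}$ survives after crossing with any $T$-linear factor, since products of $T$-cells are $T$-cells). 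Then (3) is nearly immediate: a $T$-cellular scheme comes equipped by Definition~\ref{defn:T-CELL} with exactly the kind of $T$-invariant filtration whose successive differences $X_i \setminus X_{i+1}$ are rational representations, so downward induction along the filtration, repeatedly applying the three-term assembly rule, shows $X$ is $T$-linear.

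For (2), a toric variety $X$ with dense torus $T$ is covered by its $T$-orbits, of which there are finitely many, indexed by the cones of its fan, and each orbit $O_\sigma$ is (noncanonically) a torus that is a quotient of $T$. I would stratify $X$ by orbit dimension and apply Lemma~\ref{lem:Open-orbit}: orbits of minimal dimension are closed, those of maximal dimension are open. The cleanest route is to strip off the open dense orbit $T$ itself (which is $T$-linear by (1)), leaving the closed complement, a union of lower-dimensional toric varieties, and induct on $\dim X$; at each stage I use part (2) of the definition with $Y = X$, $U = T$, and $Z$ the boundary, where $Z$ is $T$-linear by the inductive hypothesis and $U$ by (1). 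The orbit-closure structure guarantees the boundary is again a finite union of toric varieties of strictly smaller dimension, so the induction terminates. Finally (4) generalizes (2): for $X$ with finitely many $T$-orbits over an algebraically closed field, each orbit is $T$-equivariantly isomorphic to a homogeneous space $T/T_x$, again a quotient torus of $T$, hence $T$-linear by (1); and Lemma~\ref{lem:Open-orbit} again lets me peel off a maximal (open) orbit and induct on dimension.

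The main obstacle I anticipate is \emph{not} the assembly combinatorics but verifying that each stratum is genuinely a rational $T$-\emph{representation} rather than merely an abstract affine space or torsor. A $T$-orbit $T/T_x$ is a priori only a torsor, and identifying it equivariantly with $\Spec(\Sym(V^*))$ for an honest representation $V$ requires a $T$-fixed basepoint; over an algebraically closed field each orbit has a $k$-point, which is why the hypothesis in (4) is stated that way, but I will need to check that $T/T_x$, as a split torus, is $T$-linear as a \emph{space} in the sense of the definition (built from representations) rather than needing an equivariant linear structure on the orbit itself — here the reduction in (1) to products of $\mathbb{G}_m$'s, each obtained from $\mathbb{A}^1$, is exactly what bridges the gap, and the care is in tracking the $T$-action through the quotient $T \to T/T_x$ so that the $\mathbb{A}^1$'s carry genuine characters of $T$.
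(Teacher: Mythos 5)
Your proposal is correct, and for parts (2)--(4) it follows essentially the same strategy as the paper: reduce everything to (1), peel off an open orbit (using \lemref{lem:Open-orbit}), note that each orbit is a quotient torus of $T$, and assemble with the three-term rule by induction on the number of orbits. Where you genuinely diverge is in (1). The paper realizes $T = (\G_m)^n$ as the dense torus of $\A^n$ and reduces to showing that an arbitrary union of coordinate hyperplanes is $T$-linear, which it does by a double induction on the rank $n$ and on the number $m$ of hyperplanes in the union; no product lemma is needed, but the hyperplane combinatorics have to be tracked. You instead build $(\G_m)^n$ one factor at a time from the decomposition $\{0\} \hookrightarrow \A^1 \hookleftarrow \G_m$, which rests on the auxiliary claim that the product of a $T$-linear scheme with $\A^1$ (carrying a character of $T$) is again $T$-linear. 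That claim is true but is not quite as automatic as "products of $T$-cells are $T$-cells": it needs its own induction on the linearity degree of Definition~\ref{defn:linear}, crossing each triple $\{U,Y,Z\}$ with $\A^1$; you should spell that out. Your route buys a cleaner inductive structure and a reusable product-stability lemma; the paper's route stays entirely inside $\A^n$ and avoids proving anything about products. One small organizational correction to your (2): the complement of the dense torus in a toric variety is a union of orbit closures but is not itself a toric variety, so the inductive hypothesis "(2) in lower dimension" does not directly apply to it; the induction should instead be on the number of $T$-orbits (as the paper does), peeling off one open orbit of the boundary at a time, each being a quotient torus of $T$ and hence $T$-linear by (1). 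Your closing observation about orbits being torsors rather than representations, and the role of the base point (hence of algebraic closedness in (4)), is exactly the right point of care and matches why the paper routes everything through quotient tori in (1).
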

\begin{proof}
We first prove $(1)$.
It follows from Lemma~\ref{lem:elem} that $T'$ is a split torus.
Hence, it is enough to show using the remark following the definition
of $T$-linear schemes that a split torus $T$ is 
$T$-linear under the multiplication action.

We can write $T = (\G_m)^n$ and consider $\A^n$ as the toric variety with the 
dense torus $T$ via the coordinate-wise multiplication  
so that the complement of $T$ is the union of the coordinate hyperplanes. 
Since $\A^n$ is $T$-linear, it suffices to show that
the union of the coordinate hyperplanes is $T$-linear.
 
We shall prove by induction on the rank of $T$ that any union
of the coordinate hyperplanes in $\A^n$ is $T$-linear.
If $n =1$, then this is obvious. So let us assume that $n > 1$ and
let $Y$ be a union of some coordinate hyperplanes in $\A^n$.
After permuting the coordinates, we can write $Y$ as
$Y^{n}_{\{1, \cdots, m\}} = H_1 \cup \cdots \cup H_m$ where 
$H_i = \{(x_1, \cdots , x_n) \in \A^n | x_i = 0\}$.
If $m =1$, then $Y^{n}_{\{1\}}$ is $T$-equivariantly $0$-linear. 
So we assume by an induction on $m$ that $Y^{n}_{\{2, \cdots , m\}}$ is
$T$-linear.

Set $U = Y^{n}_{\{1, \cdots, m\}} \setminus Y^{n}_{\{2, \cdots , m\}}$.
Then $U$ is the complement of a union of hyperplanes 
$W^{n-1}_{\{2, \cdots ,m\}}$ in $H_1 \cong \A^{n-1}$. Notice that $T$ acts on
$H_1$ through the product $T_1$ of its last $(n-1)$ factors.
By induction on $n$, we conclude that $W^{n-1}_{\{2, \cdots ,m\}}$
is $T_1$-linear. Since $H_1$ is clearly $T_1$-linear, we conclude that $U$ is 
$T_1$-linear and
hence $T$-linear. Thus we have concluded that
both $Y^{n}_{\{2, \cdots , m\}}$ and $U$ are $T$-linear. It follows from this that 
$Y^{n}_{\{1, \cdots, m\}}$ is $T$-linear too.

The assertion $(2)$ easily follows from $(1)$ and an induction on the
number of $T$-orbits in a toric variety. The assertion (3) is immediate
from the definitions, using an induction on the length of the filtration
of a $T$-cellular scheme. To prove $(4)$, let $X$ be a $T$-scheme with only
finitely many $T$-orbits. It follows from Lemma~\ref{lem:Open-orbit}
that $X$ has an open $T$-orbit $U$. Since $k$ is algebraically closed, such
an open $T$-orbit must be isomorphic to a quotient of $T$.
In particular, it is $T$-linear by the first assertion.
An induction of the number of $T$-orbit implies that $X \setminus U$
is $T$-linear. We conclude that $X$ is also $T$-linear. 
\end{proof}

\subsubsection{Spherical varieties}\label{subsubsection:Spherical}
Recall that if $G$ is a connected reductive group over $k$, then a
normal variety $X \in \sV_G$ is called {\sl spherical} if a Borel subgroup of
$G$ has a dense open orbit in $X$. The spherical varieties constitute 
a  large class of varieties with group actions, including toric varieties,
flag varieties and all symmetric varieties. It is known that a spherical
variety $X$ has only finitely many fixed points for the $T$-action
where $T$ is a maximal torus of $G$ contained in $B$.

It follows from a theorem of Bialynicki-Birula \cite{BB}
(generalized to the case of non-algebraically closed fields by Hesselink
\cite{Hessel}) that if $T$ is a split torus over $k$ and if
$X$ is a smooth projective variety with a $T$-action
such that the fixed point locus $X^T$ is isolated, then $X$ is $T$-equivariantly
cellular. We conclude that a  smooth and projective
spherical variety is $T$-cellular and hence $T$-linear.
We do not know if all spherical varieties are $T$-linear.

\vskip .4cm 

\subsection{Equivariant G-theory of equivariantly linear schemes}
\label{subsection:Equiv-linear}
Recall that if a linear algebraic group $G$ acts on a scheme $X$, then 
the G-theory and K-theory of the quotient stack
$[X/G]$ are same as the equivariant G-theory and K-theory
of $X$ for the action $G$.
We shall use this identification throughout this text
without further mention.

The following result from \cite[\S~1.9]{Thomason1} will be used repeatedly
in this text.

\begin{thm}\label{thm:Morita} 
Let $G$ be a linear algebraic group over $k$ and let $H \subseteq G$ be a
closed subgroup of $G$. Then for any $X \in \sV_H$, the map of
spectra
\[
\bdG ([(X \stackrel{H}{\times} G)/G]) \to \bdG ([X/H]) 
\]
is a weak-equivalence. In particular, the map of spectra
\[
\bdG ([(X \times G/H)/G]) \to  \bdG ([X/H]) 
\]
is a weak-equivalence if $X \in \sV_G$. These are weak equivalences of ring 
spectra if $X$ is smooth.
\end{thm}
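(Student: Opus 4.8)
The plan is to realize the asserted weak equivalence as the map induced by an equivalence of exact categories of equivariant coherent sheaves. Recall that $\bdG([Y/K])$ is the Quillen K-theory spectrum of the exact category $\operatorname{Coh}^K(Y)$ of $K$-equivariant coherent sheaves on $Y$, and that any exact equivalence of such categories induces a weak equivalence of K-theory spectra. Thus it suffices to produce an exact equivalence $\operatorname{Coh}^G(X \stackrel{H}{\times} G) \simeq \operatorname{Coh}^H(X)$, and I would do this by passing through the common quotient stack $[(X \times G)/(H \times G)]$.

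Concretely, let $H \times G$ act on $X \times G$ by $(h,g')\cdot(x,g) = (hx,\, g'gh^{-1})$; the two factors commute, and one checks directly that this is a group action. The subgroup $\{e\}\times G$ acts freely by right translation on the $G$-coordinate, with quotient $X$ carrying its original $H$-action; descent along the trivial $G$-torsor $X \times G \to X$ then gives $\operatorname{Coh}^{H\times G}(X\times G) \simeq \operatorname{Coh}^H(X)$. Symmetrically, $H \times \{e\}$ acts freely, and by definition the quotient is $X \stackrel{H}{\times} G$ with its residual $G$-action, so descent along the $H$-torsor $X \times G \to X \stackrel{H}{\times} G$ gives $\operatorname{Coh}^{H\times G}(X\times G) \simeq \operatorname{Coh}^G(X \stackrel{H}{\times} G)$. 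Composing the two equivalences yields an exact equivalence $\operatorname{Coh}^G(X \stackrel{H}{\times} G)\simeq\operatorname{Coh}^H(X)$ realizing the canonical comparison, which proves the first weak equivalence.

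For the second statement, assuming $X \in \sV_G$, I would write down the untwisting isomorphism of $G$-schemes $\Phi : X \stackrel{H}{\times} G \xrightarrow{\cong} X \times (G/H)$, $[x,g]\mapsto (gx, gH)$, with inverse $(y, gH)\mapsto [g^{-1}y, g]$; a direct check shows $\Phi$ is well defined, bijective and $G$-equivariant for the diagonal action on the right-hand side, after which the first part applies. For the final assertion, when $X$ is smooth so is $X \stackrel{H}{\times} G$, hence $\bdK \to \bdG$ is a weak equivalence on both sides; restricting the same descent equivalence to equivariant locally free sheaves and noting that descent and the projection functors are symmetric monoidal for the tensor product shows that the induced map on $\bdK$ is multiplicative, i.e. a morphism of commutative ring spectra.

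The step I expect to be most delicate is the descent equivalence along the free action of $H$: I must verify that $X \times G \to X \stackrel{H}{\times} G$ is genuinely an fppf $H$-torsor, so that the geometric quotient exists as a scheme and faithfully flat descent is available. This rests on $H$ being a smooth closed subgroup of $G$, so that $G \to G/H$ is an fppf $H$-torsor (Zariski-locally trivial when $G$ is special, fppf-locally trivial in general), a property inherited by $X \times G \to X \stackrel{H}{\times} G$. Granting the torsor property, the identification of $H$-equivariant sheaves on the total space of a free action with sheaves on the quotient is standard, and the monoidal refinement needed for the ring statement is then formal.
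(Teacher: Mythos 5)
Your argument is correct, but note that the paper does not prove this statement at all: it is quoted verbatim from Thomason (\S 1.9 of \emph{Lefschetz Riemann-Roch theorem and coherent trace formula}) and used as a black box. Your descent proof through the intermediate quotient $[(X\times G)/(H\times G)]$, together with the untwisting isomorphism $X\stackrel{H}{\times}G\cong X\times(G/H)$ when $X$ is a $G$-scheme, is essentially the standard argument underlying the cited result, and the one delicate point you flag --- that $X\times G\to X\stackrel{H}{\times}G$ is an fppf $H$-torsor with scheme-theoretic quotient, which uses smoothness of the closed subgroup $H$ (guaranteed by the paper's conventions) and, strictly speaking, some hypothesis such as quasi-projectivity to realize the balanced product as a scheme rather than merely as a stack --- is exactly the technical content one must import from Thomason.
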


\vskip .3cm

Recall that for a stack $\mathfrak{X}$, the K-theory spectrum
$\bdK (\mathfrak{X})$ is a ring spectrum and 
$\bdG (\mathfrak{X})$ is a module spectrum over $\bdK (\mathfrak{X})$. 
In the following results, we make essential use of the 
derived smash products of module spectra over 
ring spectra. This is the derived functor of the smash product of spectra
in their homotopy category. 
We refer to \cite{SS} (see also \cite{EKMM} and 
\cite[\S~3]{J01}) for basic results in this direction. 
In general, if $R$ is a ring spectrum and $M, N$ are module
spectra over $R$, the derived smash product of $M$ and $N$ over $R$
will be denoted by $M{\overset L {\underset {R} \wedge}} N$. 
We shall now prove the following special case of Theorem~\ref{thm:main-thm-1*}.
The proof follows a trick used in \cite[Theorem~4.1]{J01} in
a different context.

\begin{prop}\label{prop:Linear-case}
Let $T$ be a split torus and let $X \in \sV_T$ be $T$-linear. 
Let $\phi: G \to T$ be a morphism of diagonalizable groups
such that $G$ acts on $X$ via $\phi$.  
Then the natural map of spectra
\begin{equation}\label{eqn:gen.weak.eq*0}
\bdK ([{\Spec(k)}/G]) {\overset L {\underset {\bdK ([{\Spec(k)}/T])} \wedge}} 
\bdG ([X/T]) \to \bdG ([X/G])
\end{equation} 
is a weak-equivalence.
\end{prop}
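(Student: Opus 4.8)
The plan is to reduce the statement to showing that a single natural comparison map $\eta_X$ is a weak equivalence, and then to prove this by induction on the integer $n$ for which $X$ is $T$-equivariantly $n$-linear. First I would construct $\eta_X$. Since $G$ acts on $X$ through $\phi\colon G\to T$, every $T$-equivariant coherent sheaf is canonically $G$-equivariant, which gives an exact restriction functor and hence a natural map of spectra $\bdG([X/T])\to \bdG([X/G])$. This map is linear over $\bdK([\Spec(k)/T])=\bdK^T$ (viewing $\bdG([X/G])$ as a $\bdK^T$-module through the ring map $\bdK^T\to \bdK^G$ induced by $\phi$ on representation rings), and its target is a module over $\bdK^G=\bdK([\Spec(k)/G])$. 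By the extension-of-scalars adjunction for module spectra over ring spectra (\cite{SS}, \cite{EKMM}), it therefore factors canonically as
\[
\eta_X\colon \bdK([{\Spec(k)}/G]) {\overset L {\underset {\bdK ([{\Spec(k)}/T])} \wedge}} \bdG([X/T]) \to \bdG([X/G]),
\]
which is exactly the map of the statement. It then remains only to show $\eta_X$ is a weak equivalence.

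For the base case $n=0$, the scheme $X$ is either empty, in which case both sides are contractible, or $X=\Spec(\Sym(V^*))$ for a rational representation $V$ of $T$ (hence of $G$ via $\phi$). In the latter case $X\to \Spec(k)$ is a $T$- and $G$-equivariant vector bundle, so homotopy invariance of equivariant G-theory gives $\bdG([X/T])\simeq \bdG([\Spec(k)/T])=\bdK^T$ and $\bdG([X/G])\simeq \bdK^G$ (using that $\Spec(k)$ is smooth, so $\bdG=\bdK$ there), compatibly with $\eta$. Under these identifications $\eta_X$ becomes the canonical equivalence $\bdK^G\wedge^L_{\bdK^T}\bdK^T\xrightarrow{\simeq}\bdK^G$.

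For the inductive step with $n\geq 1$, I would choose the family $\{U,Y,Z\}$ from the definition of $n$-linearity, with $Z\subseteq Y$ a $T$-invariant (hence $G$-invariant) closed immersion, $U=Y\setminus Z$, where $Z$ and one of $U,Y$ are $(n-1)$-linear and $X$ is the remaining member, so $X\in\{U,Y\}$. The equivariant localization theorem, via dévissage and localization (\cite{Thomason1}), gives homotopy fibration sequences
\[
\bdG([Z/T])\to \bdG([Y/T])\to \bdG([U/T]), \qquad \bdG([Z/G])\to \bdG([Y/G])\to \bdG([U/G])
\]
of $\bdK^T$- and $\bdK^G$-module spectra respectively (the maps are $\bdK^T$- resp. $\bdK^G$-linear by the projection formula), and the restriction maps make $\eta$ a morphism between them. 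Since the derived smash product $\bdK^G\wedge^L_{\bdK^T}(-)$ is exact, applying it to the first sequence produces a fibration sequence mapping to the second via $\eta_Z,\eta_Y,\eta_U$. By the inductive hypothesis $\eta_Z$ and whichever of $\eta_U,\eta_Y$ corresponds to the $(n-1)$-linear member are weak equivalences, so the long exact sequences of homotopy groups together with the five lemma force the remaining vertical map $\eta_X$ to be a weak equivalence as well.

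The main obstacle is not the induction itself but the structural bookkeeping that makes it run. The hard part will be verifying that the restriction functor from $T$-equivariant to $G$-equivariant coherent sheaves commutes with the proper pushforward and flat restriction appearing in the localization sequences, so that $\eta$ is genuinely a map of fibration sequences, and that the module-spectrum structures over $\bdK^T$ are rigid enough for $\bdK^G\wedge^L_{\bdK^T}(-)$ to be an exact functor preserving these sequences. Once this input---essentially the trick of \cite[Theorem~4.1]{J01}---is in place, the geometric content is carried entirely by homotopy invariance in the base case and localization in the inductive step.
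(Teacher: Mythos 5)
Your proposal is correct and follows essentially the same route as the paper: induction on the linearity degree $n$, with the base case handled by homotopy invariance of equivariant G-theory and the inductive step by comparing the two localization fiber sequences under the exact functor $\bdK^G\wedge^L_{\bdK^T}(-)$ and invoking the inductive hypothesis on two of the three vertical maps. The only difference is presentational: you spell out the construction of the comparison map via the extension-of-scalars adjunction, which the paper leaves implicit.
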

\begin{proof}
We assume that $X$ is $T$-equivariantly $n$-linear for some $n \ge 0$.
We shall prove our result by an ascending induction on $n$.
If $n = 0$, then $X \cong \A^n$ and hence by the homotopy invariance,
we can assume that $X = \Spec(k)$, and the result is immediate in this case.
We now assume that $n > 0$. By the definition of $T$-linearity, there are
two cases to consider: 
\begin{enumerate}
\item
There exists a $T$-invariant closed subscheme $Y$ of $X$ with complement $U$
such that $Y$ and $U$ are $T$-equivariantly $(n-1)$-linear.
\item
There exists a $T$-scheme $Z$ which contains $X$ as a $T$-invariant open
subscheme such that $Z$ and $Y = Z \setminus X$ are $T$-equivariantly 
$(n-1)$-linear.
\end{enumerate}

In the first case, the localization fiber sequence in equivariant G-theory
gives us a commutative diagram of fiber sequences in the homotopy
category of spectra\footnote {For spectra, this is same as a cofiber sequence}:
\[
\xymatrix@C2pc{
{\bdK ^G{\overset L {\underset {\bdK ^T} \wedge}} 
\bdG ([Y/T])} \ar@<1ex>[r] \ar@<-1ex>[d] & 
{\bdK ^G{\overset L {\underset {\bdK ^T} \wedge}} \bdG ([X/T])} 
\ar@<1ex>[r] \ar@<-1ex>[d] & {\bdK ^G{\overset L {\underset {\bdK ^T} \wedge}} 
\bdG ([U/T])} \ar@<-1ex>[d] \\
{\bdG ([Y/G])} \ar@<1ex>[r] & {\bdG ([X/G])} \
\ar@<1ex>[r] &{\bdG ([U/G])}.}
\]
The left and the right vertical maps are weak equivalences by the induction.
We conclude that the middle vertical map is a weak equivalence too.

In the second case, we obtain as before, a commutative diagram of fiber 
sequences in the homotopy category of spectra:
\[
\xymatrix@C2pc{
{\bdK ^G{\overset L {\underset {\bdK ^T} \wedge}} 
\bdG ([Y/T])} \ar@<1ex>[r] \ar@<-1ex>[d] & 
{\bdK ^G{\overset L {\underset {\bdK ^T} \wedge}} \bdG ([Z/T])} 
\ar@<1ex>[r] \ar@<-1ex>[d] & {\bdK ^G{\overset L {\underset {\bdK ^T} \wedge}} 
\bdG ([X/T])} \ar@<-1ex>[d] \\
{\bdG ([Y/G])} \ar@<1ex>[r] & {\bdG ([Z/G])} \
\ar@<1ex>[r] &{\bdG ([X/G])}.}
\]
The first two vertical maps are weak equivalences by induction and hence the
last vertical map must also be a weak equivalence. This completes the
proof of the proposition.
\end{proof}

We end this section with the following (rather technical) 
result which will be used in the proof of Theorem~\ref{thm:main-thm-1*}.
Taking $V$ to be $\Spec(k)$, 
this becomes a special case of what is considered in the last Proposition.

\begin{lem}\label{lem:BC-I}
Let $T$ be a split torus over $k$ and let $T'$ be a quotient of $T$.
Let $T$ act on $T'$ via the quotient map and let it act trivially on an affine
scheme $V$. Consider the scheme $X =  V \times T'$ where $T$ acts diagonally.
Let $\phi: G \to T$ be a morphism of diagonalizable groups such that 
$G$ acts on any $T$-scheme via $\phi$.
Then the map of spectra
\begin{equation}\label{eqn:BC-I0}
\bdK ([{\Spec(k)}/G]) {\overset L {\underset {\bdK ([{\Spec(k)}/T])} \wedge}} 
\bdG ([X/T]) \to \bdG ([X/G])
\end{equation} 
is a weak-equivalence.
\end{lem}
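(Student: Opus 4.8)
The plan is to mimic the inductive proof of Proposition~\ref{prop:Linear-case}, carrying the trivial factor $V$ along at every stage; the only genuinely new input will be the base case, where the scheme has trivial $T$-action but is no longer an affine space. For a $T$-scheme $W$ on which $G$ acts through $\phi$, write $\mu_W$ for the natural map $\bdK^G {\overset L {\underset {\bdK^T} \wedge}} \bdG([W/T]) \to \bdG([W/G])$ whose weak-equivalence we want to establish for $W = X = V\times T'$. Two formal facts will be used throughout: first, since $\bdK^G{\overset L {\underset {\bdK^T} \wedge}}(-)$ is a derived smash product it is exact, hence carries the localization cofiber sequences in equivariant G-theory to cofiber sequences; second, $\mu_{(-)}$ is natural for proper pushforward and flat pullback, so it induces maps of localization sequences exactly as in the proof of Proposition~\ref{prop:Linear-case}.

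\emph{The base case.} Suppose first that $T$ (and hence $G$, through $\phi$) acts trivially on $W$; I claim $\mu_W$ is a weak equivalence. Because $T$ is split diagonalizable, every $T$-equivariant coherent sheaf on a scheme with trivial action splits into its isotypic pieces indexed by the characters of $T$; this yields a natural equivalence of $\bdK^T$-module spectra $\bdG([W/T]) \simeq \bdG(W){\underset {\bdK(k)} \wedge}\bdK^T$, and likewise $\bdG([W/G]) \simeq \bdG(W){\underset {\bdK(k)} \wedge}\bdK^G$, with the evident $R(T)$- and $R(G)$-actions. Substituting the first equivalence into the source of $\mu_W$ and cancelling the two copies of $\bdK^T$ gives
\[
\bdK^G{\overset L {\underset {\bdK^T} \wedge}}\bigl(\bdG(W){\underset {\bdK(k)} \wedge}\bdK^T\bigr) \simeq \bdG(W){\underset {\bdK(k)} \wedge}\bdK^G \simeq \bdG([W/G]),
\]
and one checks this composite is $\mu_W$, so $\mu_W$ is a weak equivalence. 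If instead $W = V\times \A^m$ with $T$ acting trivially on $V$ and linearly (through $\phi$ and the quotient $T\to T'$) on the representation $\A^m$, then the projection $W\to V$ is a $T$- and $G$-equivariant vector bundle, so homotopy invariance of equivariant G-theory identifies $\mu_{V\times\A^m}$ with $\mu_V$, which is covered by the previous paragraph.

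\emph{The inductive step.} By Proposition~\ref{prop:lin-elem}(1), $T'$ is $T$-linear; concretely, writing $T'\cong (\G_m)^d$ and $(\G_m)^d\subset\A^d$ with complement the union $Y$ of the coordinate hyperplanes, the torus $T'$ is built from $\A^d$ and the $Y$-stratification by the operations of Definition~\ref{defn:linear}. Crossing this entire stratification with $V$ produces $T$-invariant closed/open decompositions of $V\times\A^d$ whose pieces are exactly the products of $V$ with the coordinate subspaces of $\A^d$ and their complements. I will prove, by induction on the length of the linearity filtration and exactly as in Propositions~\ref{prop:lin-elem} and \ref{prop:Linear-case}, that $\mu_{V\times W'}$ is a weak equivalence for every such piece $W'$. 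The induction begins at $W'=\A^m$, where $V\times\A^m$ is handled by the base case. At each step one has a localization sequence relating $V\times Z$, $V\times W'$ and $V\times U'$ (with $Z\subset W'$ closed and $U'$ its complement), two of whose three terms are already known; applying $\bdK^G{\overset L {\underset {\bdK^T} \wedge}}(-)$ and comparing the two resulting cofiber sequences via $\mu$ yields a commutative ladder in which two of the three vertical maps are weak equivalences, whence so is the third. Taking $W'=T'=\A^d\setminus Y$ at the top of the induction gives the conclusion for $X=V\times T'$.

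The main obstacle will be the base case, and more precisely verifying the two claimed identifications at the level of module spectra rather than merely on homotopy groups: one must produce the isotypic decomposition $\bdG([W/T])\simeq \bdG(W){\underset {\bdK(k)} \wedge}\bdK^T$ as a natural equivalence of $\bdK^T$-modules and confirm that the derived-smash cancellation respects the module structures, so that the resulting equivalence is genuinely the map $\mu_W$. Once this is settled, the localization and homotopy-invariance formalism is identical to that of Proposition~\ref{prop:Linear-case}. Note that because only G-theory appears, no smoothness or linearity hypothesis on $V$ is required: localization and homotopy invariance hold for $\bdG$ on all of $\sV_T$.
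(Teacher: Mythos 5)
Your proof is correct in outline, but it takes a genuinely different route from the paper's. The paper does not induct at all: it observes that $T$ acts on $X=V\times T'$ through its quotient $T'$ and $G$ through its image $H\subseteq T'$, invokes Thomason's decomposition for affine schemes to get $\bdG([X/T])\simeq \bdG([X/{T'}])\,{\overset L{\underset{\bdK^{T'}}\wedge}}\,\bdK^T$ (and likewise for $H\subseteq G$), and then, after a formal base-change computation, identifies $\bdG([X/{T'}])\,{\overset L{\underset{\bdK^{T'}}\wedge}}\,\bdK^H$ with $\bdG([X/H])$ using the Morita equivalence $\bdK^H\simeq\bdG([({T'}/H)/{T'}])$ together with the K{\"u}nneth weak equivalence of Proposition~\ref{prop:Kunneth-Linear-case} applied to the $T'$-linear scheme ${T'}/H$. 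You instead prove the statement for trivial actions directly via the isotypic decomposition $\bdG([W/T])\simeq\bdG(W)\wedge_{\bdK(k)}\bdK^T$ and then bootstrap to $V\times T'$ by localization induction along the $T$-linearity filtration of $T'$, exactly as in Proposition~\ref{prop:Linear-case}. Both arguments rest on the same Thomason-type decomposition for diagonalizable groups acting trivially (or through a quotient); you apply it only to the factor $V$, the paper applies it to all of $X$. What your route buys is self-containedness: it stays entirely within the Section~\ref{section:ELin} toolkit and avoids the paper's forward reference to the K{\"u}nneth formula of Section~\ref{section:Kunneth} (which is not circular, since Proposition~\ref{prop:Kunneth-Linear-case} does not use this lemma, but is aesthetically awkward), and it does not use affineness of $V$. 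What the paper's route buys is brevity and the avoidance of the one point you rightly flag as delicate, namely verifying that the isotypic splitting and the smash-cancellation are equivalences of module spectra inducing precisely the map $\mu_W$; that verification is genuinely needed in your argument and is of the same nature as the spectrum-level form of Thomason's lemma that the paper cites, so it is a real but standard gap to fill rather than an error.
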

\begin{proof}
Let $H$ denote the image of $G$ in $T'$ under the composite map
$G \xrightarrow{\phi} T \surj T'$ and
let $H' = {T'}/H$. Notice that $T'$ is a split torus by Lemma~\ref{lem:elem}.
Since $T$ (and hence $G$) acts trivially on 
the scheme $V$, it follows that $T$ and $G$ act on $X$ via their quotients 
$T'$ and $H$, respectively. Since $X$ is affine and all the underlying groups
are diagonalizable, it follows from \cite[Lemma~5.6]{Thomason1} that the maps
of spectra
\begin{equation}\label{eqn:BC-I1}
\bdG ([X/{T'}]) {\overset L {\underset {\bdK ^{T'}} \wedge}} \bdK ^T \to \bdG ([X/T]) ;
\end{equation}
\[
\bdG ([X/{H}]) {\overset L {\underset {\bdK ^{H}} \wedge}} \bdK ^G \to \bdG ([X/G]) 
\]
are weak equivalences.
Using the first weak equivalence, we obtain
\begin{equation}\label{eqn:BC-I2}
\begin{array}{lll}
\bdG ([X/T]) {\overset L {\underset {\bdK ^{T}} \wedge}} \bdK ^G & {\cong} &
\left(\bdG ([X/{T'}]) {\overset L {\underset {\bdK ^{T'}} \wedge}} \bdK ^T\right)
{\overset L {\underset {\bdK ^{T}} \wedge}} \bdK ^G \\
& {\cong} &  \bdG ([X/{T'}]) {\overset L {\underset {\bdK ^{T'}} \wedge}} \bdK ^G \\
& {\cong} &  \bdG ([X/{T'}]) {\overset L {\underset {\bdK ^{T'}} \wedge}} 
\left(\bdK ^H {\overset L {\underset {\bdK ^{H}} \wedge}} \bdK ^G\right) \\
& {\cong} & \left( \bdG ([X/{T'}]) {\overset L {\underset {\bdK ^{T'}} \wedge}} 
\bdK ^H\right) {\overset L {\underset {\bdK ^{H}} \wedge}} \bdK ^G.
\end{array}
\end{equation}
On the other hand, we have
\begin{equation}\label{eqn:BC-I3}
\begin{array}{lll}
\bdG ([X/{T'}]) {\overset L {\underset {\bdK ^{T'}} \wedge}} 
\bdK ^H & {\cong}^{1} &
\bdG ([X/{T'}]) {\overset L {\underset {\bdK ^{T'}} \wedge}} 
\bdG ([{H'}/{T'}]) \\
& {\cong}^{2} & \bdG ([(X \times H')/{T'}])\\
& {\cong}^{3} & \bdG ([X/H]),
\end{array}
\end{equation}
where the isomorphisms ${\cong}^{1}$ and ${\cong}^{3}$ follow from 
Theorem~\ref{thm:Morita}. 
The isomorphism ${\cong}^{2}$ follows from Propositions~\ref{prop:lin-elem} and
~\ref{prop:Kunneth-Linear-case}.
Combining ~\eqref{eqn:BC-I1}, ~\eqref{eqn:BC-I2} and  ~\eqref{eqn:BC-I3}, 
we get the weak equivalences
\[
\bdG ([X/T]) {\overset L {\underset {\bdK ^{T}} \wedge}} \bdK ^G 
\cong \bdG ([X/H])  {\overset L {\underset {\bdK ^{H}} \wedge}} \bdK ^G \cong
\bdG ([X/G])
\]
and this proves the lemma.
\end{proof}


\section{G-theory of general toric stacks}\label{section:Gen}
This section is devoted to the determination of the G-theory of a general 
(generically stacky) toric stack.
We prove our main results in a much more general set-up where
the underlying scheme with a $T$-action need not be a toric variety.
 
Our first result is a spectral sequence that computes the 
$G$-equivariant G-theory of a $T$-scheme $X$ in terms of its
$T$-equivariant G-theory and the representation ring of $G$ whenever
there is a morphism of diagonalizable groups $\phi: G \to T$.
When the underlying scheme is assumed to be smooth, 
these conclusions may be stated in terms of K-theory instead of G-theory. 

This result specializes to the case of all (generically stacky) toric stacks
when $X$ is assumed to be a toric variety. 
We conclude this section with an explicit presentation
of the Grothendieck K-theory ring of a 
smooth toric stack which may not necessarily be Deligne-Mumford.

\vskip .3cm

We now prove the following main result of this section and derive
its consequences.

\begin{thm}\label{thm:main-thm-1*}
Let $T$ be a split torus acting on a scheme $X$ 
and let $\phi: G \to T$ be a morphism of diagonalizable groups
such that $G$ acts on $X$ via $\phi$.  
Then the natural map of spectra
\begin{equation}\label{eqn:gen.weak.eq*0*}
\bdK ([{\Spec(k)}/G]) {\overset L {\underset {\bdK ([{\Spec(k)}/T])} \wedge}} 
\bdG ([X/T]) \to \bdG ([X/G])
\end{equation} 
is a weak-equivalence.
In particular, one obtains a spectral sequence:
\begin{equation}\label{eqn:gen.weak.eq*1}
E^2_{s,t} = 
{\Tor}_{s,t}^{\bdK ^T_*(k)}(\bdK ^G_*(k), \bdG _*([X/T])) 
\Rightarrow \bdG _{s+t}([X/G]).
\end{equation}
\end{thm}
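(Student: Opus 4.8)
The plan is to deduce the general case from \propref{prop:Linear-case} and \lemref{lem:BC-I} by Noetherian induction on $X$, using the localization sequence to propagate the weak equivalence from a well-chosen dense open subscheme to all of $X$. I regard both sides of \eqref{eqn:gen.weak.eq*0*} as functors on $\sV_T$ and write $\eta_X$ for the natural transformation
\[
\eta_X : \bdK^G {\overset L {\underset {\bdK^T} \wedge}} \bdG([X/T]) \longrightarrow \bdG([X/G]),
\]
abbreviating $\bdK^G = \bdK([\Spec(k)/G])$ and $\bdK^T = \bdK([\Spec(k)/T])$. First I would record that $\eta$ is compatible with localization: for a $T$-invariant closed immersion $Z \hookrightarrow X$ with open complement $U$, both the source and the target of $\eta$ carry cofiber sequences indexed by $(Z, X, U)$ — for the target this is localization in equivariant G-theory, and for the source it is obtained by applying the exact functor $\bdK^G {\overset L {\underset {\bdK^T} \wedge}} (-)$ to the localization cofiber sequence of $\bdG([-/T])$. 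Since $\eta_Z$, $\eta_X$, $\eta_U$ then fit into a map of cofiber sequences, any two of them being weak equivalences forces the third. This is exactly the two-out-of-three mechanism already exploited in the proof of \propref{prop:Linear-case}.

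Granting this, the proof reduces to producing, for each $X$, a dense $T$-invariant open $U \subseteq X$ on which $\eta_U$ is a weak equivalence. Indeed, by Noetherian induction I may assume $\eta_W$ is a weak equivalence for every proper $T$-invariant closed subscheme $W \subsetneq X$; applying this to $W = X \setminus U$ and combining with the weak equivalence of $\eta_U$ via two-out-of-three yields that $\eta_X$ is a weak equivalence. Since G-theory is insensitive to nilpotents and the connectedness of $T$ forces each irreducible component of $X$ to be $T$-invariant, I may further reduce to the case where $X$ is an integral $T$-variety.

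The heart of the argument — and the step I expect to be the main obstacle — is the generic structure of the $T$-action. Let $T_0 \subseteq T$ be the generic stabilizer and set $T' = T/T_0$; by \lemref{lem:elem} the group $T'$ is again a split torus, and on a dense open the $T$-action factors through the generically free action of $T'$. By Rosenlicht's theorem there is a dense $T$-invariant open admitting a geometric quotient $q \colon U_0 \to V_0$ which is a $T'$-torsor over a dense open of $V_0$; since $T'$ is a \emph{split} torus, Hilbert's Theorem~90 trivializes this torsor over the generic point of $V_0$, hence over a dense open, so after shrinking one obtains a $T$-equivariant isomorphism $U \cong V \times T'$ with $V$ affine carrying the trivial $T$-action and $T$ acting on $T'$ through $T \surj T'$. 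This is precisely the situation of \lemref{lem:BC-I}, which gives that $\eta_U$ is a weak equivalence. The delicate points here are the existence of the geometric quotient over a possibly imperfect base field and the descent of the product structure from the generic point; for these one may invoke the generic triviality of torsors under split tori together with the slice description of torus actions used in the constructions of \cite{Merk} and \cite{VV}.

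Finally, the spectral sequence \eqref{eqn:gen.weak.eq*1} is formal once the weak equivalence \eqref{eqn:gen.weak.eq*0*} is established. For a ring spectrum $R$ and module spectra $M$, $N$ over $R$ there is a convergent Tor (Eilenberg--Moore/K{\"u}nneth) spectral sequence
\[
E^2_{s,t} = \Tor^{R_*}_{s,t}(M_*, N_*) \Rightarrow \pi_{s+t}\big(M {\overset L {\underset {R} \wedge}} N\big),
\]
as in \cite{EKMM} (see also \cite[\S~3]{J01}). Taking $R = \bdK^T$, $M = \bdK^G$ and $N = \bdG([X/T])$, so that $R_* = \bdK^T_*(k)$, $M_* = \bdK^G_*(k)$ and $N_* = \bdG_*([X/T])$, and identifying the abutment through the weak equivalence \eqref{eqn:gen.weak.eq*0*}, yields precisely \eqref{eqn:gen.weak.eq*1}.
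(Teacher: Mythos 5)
Your proof is correct and follows essentially the same route as the paper: noetherian induction combined with the localization two-out-of-three argument, reduction to a dense invariant open of product form handled by Lemma~\ref{lem:BC-I}, and the EKMM Tor spectral sequence for the abutment. The only difference is that the generic slice structure $U \cong V \times T'$ you assemble by hand from Rosenlicht's theorem and Hilbert's Theorem~90 is obtained in the paper in one step by citing Thomason's generic slice theorem \cite[Proposition~4.10]{Thomason2}, which also supplies the affineness and field-of-definition points you flag as delicate.
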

\begin{proof}
We shall prove the theorem by the noetherian induction on $T$-schemes.
The statement of the theorem is obvious if $X$ is the empty scheme so that
both sides of ~\eqref{eqn:gen.weak.eq*0*} are contractible.
Suppose $X$ is any $T$-scheme such that ~\eqref{eqn:gen.weak.eq*0*} holds when
$X$ is replaced by all its proper $T$-invariant closed subschemes.
We show that  ~\eqref{eqn:gen.weak.eq*0*} holds for $X$. 
This will prove the theorem.

By Thomason's generic slice theorem \cite[Proposition~4.10]{Thomason2},
there exists a $T$-invariant dense open subset $U \subseteq X$ which is affine.
Moreover, $T$ acts on $U$ via its quotient $T'$ which in turn acts freely on
$U$ with affine geometric quotient $U/T$ such that there is a $T$-equivariant
isomorphism $U \cong (U/T) \times T'$. Here, $T$ acts trivially on $U/T$,
via the quotient map on $T'$ and diagonally on $U$.
The weak equivalence of  ~\eqref{eqn:gen.weak.eq*0*} holds for $U$ by
Lemma~\ref{lem:BC-I}.

We now set $Y = X \setminus U$. Then $Y$ is a proper $T$-invariant closed
subscheme of $X$. The localization sequence induces the commutative diagram
of the fiber sequences in the homotopy category of spectra:
\[
\xymatrix@C2pc{
{\bdK ^G{\overset L {\underset {\bdK ^T} \wedge}} 
\bdG ([Y/T])} \ar@<1ex>[r] \ar@<-1ex>[d] & 
{\bdK ^G{\overset L {\underset {\bdK ^T} \wedge}} \bdG ([X/T])} 
\ar@<1ex>[r] \ar@<-1ex>[d] & {\bdK ^G{\overset L {\underset {\bdK ^T} \wedge}} 
\bdG ([U/T])} \ar@<-1ex>[d] \\
{\bdG ([Y/G])} \ar@<1ex>[r] & {\bdG ([X/G])} \
\ar@<1ex>[r] &{\bdG ([U/G])}.}
\]
We have shown above that the right vertical map is a weak equivalence.
The left vertical map is a weak equivalence by the noetherian induction.
We conclude that the middle vertical map is a weak equivalence too.

The existence of the spectral sequence now follows along 
standard lines (see for example, \cite[Theorem IV.4.1]{EKMM}).
\end{proof}

\vskip .3cm 

{\bf{Proof of Theorem~\ref{thm:main-thm-1}:}}
To obtain the spectral sequence ~\eqref{eqn:gen.weak.eq1}, it is enough to 
identify this spectral sequence with the one in ~\eqref{eqn:gen.weak.eq*1}.

To see this, we recall from \cite[Lemma~5.6]{Thomason1} that the maps
$R(T){\underset {\Z} \otimes} \bdK _*(k) \to \bdK _*([{\Spec(k)}/T])$ and
$R(G){\underset {\Z} \otimes} \bdK _*(k) \to \bdK _*([{\Spec(k)}/G])$
are ring isomorphisms.
Since $R(T)$ and $R(G)$ are flat $\Z$-modules, these isomorphisms can be 
written as
\begin{equation}\label{eqn:main-thm-1*0}
R(T) {\overset L {\underset {\Z} \otimes}} \bdK _*(k) 
\xrightarrow{\cong} \bdK _*([{\Spec(k)}/T]) \ {\rm and} \ \
R(G) {\overset L {\underset {\Z} \otimes}} \bdK _*(k) \xrightarrow{\cong}
\bdK _*([{\Spec(k)}/G]),
\end{equation}
where ${\overset L \otimes}$ denotes the derived tensor product.

Let $M^{\bullet} \xrightarrow{\sim} R(G)$ be a flat resolution of $R(G)$ as an
$R(T)$-module. Since $R(T)$ is a flat $\Z$-module, we see that
$M^{\bullet} \xrightarrow{\sim} R(G)$ is a flat resolution of $R(G)$ also as a
$\Z$-module. In particular, we obtain

\begin{equation}\label{eqn:main-thm-1*1}
\begin{array}{lll}
\bdK ^G_*(k) {\overset L {\underset {\bdK ^T_*(k)} \otimes}} \bdG _*([X/T]) & \cong &
\left(R(G) {\overset L {\underset {\Z} \otimes}} \bdK _*(k) \right)
{\overset L {\underset {R(T) {\underset {\Z}\otimes} \bdK _*(k)} \otimes}} 
\bdG _*([X/T]) \\
& \cong & 
\left(M ^{\bullet}{\overset L {\underset {\Z} \otimes}} \bdK _*(k) \right)
{\overset L {\underset {R(T) {\underset {\Z}\otimes} \bdK _*(k)} \otimes}} 
\bdG _*([X/T]) \\
& {\cong}^{1} & 
\left(M^{\bullet} {\underset {\Z}\otimes} \bdK _*(k) \right)
{\overset L {\underset{R(T) {\underset {\Z}\otimes} \bdK _*(k)} \otimes}} 
\bdG _*([X/T]) \\
& {\cong}^{2} & 
\left(M ^{\bullet} {\underset {\Z}\otimes} \bdK _*(k) \right)
{\underset {R(T) {\underset {\Z}\otimes} \bdK _*(k)} \otimes} \bdG _*([X/T]) \\
& {\cong} & 
M ^{\bullet} {\underset {R(T)} \otimes} \left(R(T) {\underset {\Z} \otimes} \bdK _*(k)\right)
{\underset {R(T) {\underset {\Z}\otimes} \bdK _*(k)} \otimes} \bdG _*([X/T]) \\
& {\cong} & M ^{\bullet} {\underset {R(T)} \otimes}  \bdG _*([X/T]) \\
& {\cong}^{3} & M ^{\bullet}{\overset L {\underset {R(T)} \otimes}}  \bdG _*([X/T]) \\
& {\cong} & R(G) {\overset L {\underset {R(T)} \otimes}}  \bdG _*([X/T]),
\end{array}
\end{equation}
where the isomorphism ${\cong}^{1}$ follows because $M^{\bullet}$ is a  
complex of flat $\Z$-modules,
${\cong}^{2}$ follows because $M^{\bullet} {\underset {\Z}\otimes} \bdK _*(k)$
is a complex of flat $R(T) {\underset {\Z}\otimes} \bdK _*(k)$-modules and the
isomorphism ${\cong}^{3}$ follows because $M^{\bullet}$ is a complex of flat 
$R(T)$-modules.  
Taking the homology groups on the both sides, we obtain
\[
{\Tor}_{s,t}^{\bdK ^T_*(k)}(\bdK ^\bdG _*(k), \bdG _*([X/T])) \cong
{\Tor}^{R(T)}_{s,t}(R(G), \bdG _*([X/T]))
\]
which yields the spectral sequence ~\eqref{eqn:gen.weak.eq1}.
The isomorphism of the edge map $\bdG _0([X/T]) 
{\underset {R(T)} \otimes} R(G) \to
\bdG _0([X/G])$ follows immediately from  ~\eqref{eqn:gen.weak.eq1}
and the fact that the equivariant G-theory spectra appearing 
in Theorem ~\ref{thm:main-thm-1} are all connected (have no negative
homotopy groups).

Let us now assume that $X$ is a smooth toric variety with dense torus $T$ such
that $\bdK _0([X/T])$ is a projective $R(T)$-module. 
In this case, we can identify the G-theory and the K-theory.
To show the degeneration of the spectral sequence ~\eqref{eqn:gen.weak.eq1},
it suffices to show that the map
\begin{equation}\label{eqn:Degen1}
\bdG _*([X/T]) {\underset {R(T)} \otimes} R(G) \to 
\bdG _*([X/T]) {\overset L {\underset {R(T)} \otimes}}  R(G)
\end{equation}
is an isomorphism. However, we have
\[
\begin{array}{lll}
\bdG _*([X/T]) {\overset L {\underset {R(T)} \otimes}}  R(G) & {\cong}^{0} &
\left(\bdK ^T_*(k)  {\underset {R(T)} \otimes} \bdG _0([X/T])\right) 
{\overset L {\underset {R(T)} \otimes}}  R(G) \\
& {\cong}^{1} &
\left(\bdK ^T_*(k)  {\overset L {\underset {R(T)} \otimes}} \bdG _0([X/T])\right) 
{\overset L {\underset {R(T)} \otimes}}  R(G) \\
& {\cong}^{2} &
\left(\bdK _*(k)  {\overset L {\underset {\Z} \otimes}} R(T)\right) 
{\overset L {\underset {R(T)} \otimes}} 
\left(\bdG _0([X/T]) {\overset L {\underset {R(T)} \otimes}}  R(G)\right) \\
& {\cong}^{3} &
\left(\bdK _*(k)  {\overset L {\underset {\Z} \otimes}} R(T)\right) 
{\overset L {\underset {R(T)} \otimes}} 
\left(\bdG _0([X/T]) {\underset {R(T)} \otimes}  R(G)\right) \\
& {\cong}^{4} &
\bdK _*(k)  {\overset L {\underset {\Z} \otimes}}
\left(\bdG _0([X/T]) {\underset {R(T)} \otimes}  R(G)\right) \\
& {\cong}^{5} &
\bdK _*(k)  {\underset {\Z} \otimes}
\left(\bdG _0([X/T]) {\underset {R(T)} \otimes}  R(G)\right) \\
& {\cong}^{6} &
\left(\bdK _*(k)  {\underset {\Z} \otimes} \bdG _0([X/T])\right)  
{\underset {R(T)} \otimes}  R(G) \\
& {\cong}^{7} &
\bdG _*([X/T]) {\underset {R(T)} \otimes}  R(G).
\end{array}
\]
The isomorphism ${\cong}^{0}$ follows from \cite[Proposition~6.4]{VV} in general
and also from Theorem~\ref{thm:main-thm-2} when $X$ is projective.
The isomorphism ${\cong}^{1}$ follows 
because $\bdG _0([X/T])$ is projective $R(T)$-module. 
The isomorphism ${\cong}^{2}$ follows from \cite[Lemma~5.6]{Thomason1}
because $R(T)$ is flat $\Z$-module.
The isomorphism ${\cong}^{3}$ follows again from the projectivity of
$\bdG _0([X/T])$ as an $R(T)$-module. The isomorphisms ${\cong}^{4}$ and 
${\cong}^{6}$ are the
associativity of the ordinary and derived tensor products. 
The isomorphism ${\cong}^{5}$
follows because $R(G)$ is a free $\Z$-module 
and $\bdG _0([X/T]) {\underset {R(T)} \otimes}  R(G)$ is a projective $R(G)$-module
and hence is flat as a $\Z$-module.
The isomorphism ${\cong}^{7}$ follows again from \cite[Proposition~6.4]{VV} in 
general and also from Theorem~\ref{thm:main-thm-2} when $X$ is projective.
This proves ~\eqref{eqn:Degen1}.
The projectivity of $\bdG _0([X/T])$ as $R(T)$-module when
$X$ is a smooth and projective toric variety, 
is shown in \cite[Proposition~6.9]{VV}
(see also Lemma~\ref{lem:linear-P}).
The proof of Theorem~\ref{thm:main-thm-1} is now complete. 
$\hspace*{14.4cm} \hfil\square$

\begin{remk}\label{remk:SS}
The spectral sequence ~\eqref{eqn:gen.weak.eq1} is basically an 
Eilenberg-Moore type spectral sequence. 
A spectral sequence similar to the one in ~\eqref{eqn:gen.weak.eq1}
had been constructed by Merkurjev \cite{Merk} in the special case when $G$ is 
the trivial group. The construction of that spectral sequence is considerably 
more involved. This special case ($G = \{e\}$) of the
above construction yields a completely different and
simpler proof of Merkurjev's theorem in the setting of schemes with the
action of split tori.
\end{remk}

\begin{remk}\label{remk:Baggio}
It was shown by Baggio \cite{Bag} that there are examples of non-projective 
smooth toric varieties $X$ such that $\bdG _0([X/T])$ is a projective 
$R(T)$-module. 
This shows that there are smooth non-projective toric varieties for which the 
spectral sequence in Theorem ~\ref{thm:main-thm-1} degenerates.
In all these cases, one obtains a complete description of the K-theory
of the toric stack $[X/G]$.
We shall see in Section~\ref{subsection:WPS} that there are examples
where the spectral sequence of Theorem ~\ref{thm:main-thm-1} degenerates 
even if $\bdG _0([X/T])$ is not a projective $R(T)$-module. 
\end{remk}

\subsection{Grothendieck group of toric stacks}\label{subsection:BHR}
In \cite{BH}, Borisov and Horja had computed the Grothendieck $K$-theory ring
$\bdK _0([X/G])$ when $[X/G]$ is a smooth toric Deligne-Mumford stack.
Recall from \S~\ref{section:T-stacks} that the dense stacky torus of a 
Deligne-Mumford stack is of the form $T' \times \mathfrak{B}_{\mu}$
where $T'$ is a torus and $\mu$ is a finite abelian group.
The following consequence of Theorem~\ref{thm:main-thm-1} generalizes
the result of \cite{BH} to the case of all smooth toric stacks,
not necessarily Deligne-Mumford. Even in this latter case, we obtain a
simpler proof.

\begin{thm}\label{thm:BH}
Let $\mathfrak{X} = [X/G]$ be a smooth and reduced toric stack 
associated to the data $\underline{X} = (X, G \xrightarrow{\phi} T)$. 
Let $\Delta$ be the fan defining $X$ and let $d$ be the number of rays in
$\Delta$.  
Let $I^G_{\Delta}$ denote the ideal of the Laurent polynomial algebra 
$\Z[t^{\pm 1}_1, \cdots , t^{\pm 1}_d]$ generated by the relations:
\begin{enumerate}
\item
$(t_{j_1}-1) \cdots (t_{j_l}-1), \ 1 \le j_p \le d$
such that the rays $\rho_{j_1}, \cdots , \rho_{j_l}$ do not span a cone of 
$\Delta$. 
\item
$\left(\stackrel{d}{\underset{j = 1}\prod}
(t_j)^{<-\chi, v_j>}\right) - 1, \ \chi \in (T/G)^{\vee}$.
\end{enumerate}
Then there is a ring isomorphism
\begin{equation}\label{eqn:BH0}
\phi: \frac{\Z[t^{\pm 1}_1, \cdots , t^{\pm 1}_d]}{I^G_{\Delta}} \xrightarrow{\cong}
\bdK _0(\mathfrak{X}).
\end{equation}
\end{thm}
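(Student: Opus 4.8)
The plan is to read off the presentation from the \emph{unconditional} edge-map isomorphism of Theorem~\ref{thm:main-thm-1}, combined with the Vezzosi--Vistoli presentation of the $T$-equivariant $K_0$ of a smooth toric variety. Since $X$ is smooth, the map $\bdK ^G(X)\to \bdG ^G(X)$ is a weak equivalence for every $G$, so we may work with $\bdK _0$ throughout. The edge map of Theorem~\ref{thm:main-thm-1} then supplies a ring isomorphism
\begin{equation*}
\bdK _0([X/T]) {\underset {R(T)} \otimes} R(G) \xrightarrow{\cong} \bdK _0([X/G]) = \bdK _0(\mathfrak{X}),
\end{equation*}
where $R(T)\to R(G)$ is the map $\phi^*$ induced by the closed immersion $\phi\colon G\hookrightarrow T$. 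Thus it suffices to compute the left-hand side and identify it with $\Z[t_1^{\pm 1},\dots,t_d^{\pm 1}]/I^G_\Delta$.

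For the smooth toric variety $X=X_\Delta$ I would invoke \cite{VV} to present $\bdK _0([X/T])=\bdK _0^T(X)$ as the $R(T)$-algebra
\begin{equation*}
\bdK _0^T(X) \cong \frac{R(T)[t_1^{\pm 1},\dots,t_d^{\pm 1}]}{I^T_\Delta},
\end{equation*}
where $t_j=[\mathcal{O}_X(D_j)]$ is the class of the line bundle of the $T$-invariant prime divisor $D_j$ attached to the ray $\rho_j$ with primitive generator $v_j$, and $I^T_\Delta$ is generated by the Stanley--Reisner relations $(t_{j_1}-1)\cdots(t_{j_l}-1)$ for rays $\rho_{j_1},\dots,\rho_{j_l}$ that do not span a cone of $\Delta$, together with the character relations $e^{m}=\prod_{j=1}^{d} t_j^{\langle -m,v_j\rangle}$ for $m\in T^{\vee}$. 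The first vanish because $\prod_i\bigl(1-[\mathcal{O}(-D_{j_i})]\bigr)=[\mathcal{O}_{D_{j_1}\cap\cdots\cap D_{j_l}}]=0$ when the intersection is empty, and the second encode $\mathrm{div}(\chi^m)=\sum_j\langle m,v_j\rangle D_j$ at the equivariant level; smoothness makes every cone unimodular, which is what guarantees these relations are complete.

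Base-changing this presentation along $\phi^*\colon R(T)=\Z[T^{\vee}]\to R(G)=\Z[G^{\vee}]$ replaces $R(T)$ by $R(G)$ and turns each character relation into $e^{\phi^{\vee}(m)}=\prod_j t_j^{\langle -m,v_j\rangle}$. The key structural fact is that, since $G\subseteq T$ is a closed subgroup of a diagonalizable group (Lemma~\ref{lem:elem}), the restriction $\phi^{\vee}\colon T^{\vee}\twoheadrightarrow G^{\vee}$ is \emph{surjective}, with kernel exactly $(T/G)^{\vee}$, giving a short exact sequence $0\to (T/G)^{\vee}\to T^{\vee}\to G^{\vee}\to 0$. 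Surjectivity means every generator $e^{g}$ of $R(G)$ already equals a monomial $\prod_j t_j^{\langle -m,v_j\rangle}$ in the quotient, so the natural map $\Z[t_1^{\pm 1},\dots,t_d^{\pm 1}]\to \bdK _0(\mathfrak{X})$ sending $t_j\mapsto[\mathcal{O}(D_j)]$ is surjective; and two lifts $m,m'$ of the same $g$ differ by $\chi=m-m'\in (T/G)^{\vee}$, yielding precisely the relation $\prod_j t_j^{\langle -\chi,v_j\rangle}=1$ of type (2). I would make the elimination rigorous by exhibiting the inverse ring map $R(G)[t^{\pm 1}]/I' \to \Z[t^{\pm 1}]/I^G_\Delta$ defined by $e^{g}\mapsto \prod_j t_j^{\langle -m_g,v_j\rangle}$ for a chosen lift $m_g$ of $g$ (well defined modulo $I^G_\Delta$ exactly because of the type-(2) relations) and $t_j\mapsto t_j$, checking that it annihilates $I'$ and is two-sided inverse to $\phi$. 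The main obstacle is this final bookkeeping: confirming that after discarding the redundant $R(G)$-generators the surviving relations are \emph{exactly} those of $I^G_\Delta$ — no more and no fewer — which rests squarely on the exactness of $0\to (T/G)^{\vee}\to T^{\vee}\to G^{\vee}\to 0$ and on the completeness of the Vezzosi--Vistoli relations in the smooth case.
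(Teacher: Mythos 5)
Your proposal is correct and follows essentially the same route as the paper: the edge-map isomorphism $\bdK _0([X/T])\otimes_{R(T)}R(G)\xrightarrow{\cong}\bdK _0([X/G])$ from Theorem~\ref{thm:main-thm-1}, the Vezzosi--Vistoli presentation of $\bdK _0^T(X)$, the action of characters as monomials in the $t_j$, and the identification of $R(G)$ with $R(T)/(\chi-1:\chi\in(T/G)^{\vee})$ (the paper's Lemma~\ref{lem:Groupring}, which is exactly your exact sequence of character groups made rigorous). The only differences are presentational: you carry the characters $e^{m}$ as redundant generators and eliminate them at the end, and you normalize $t_j\mapsto[\sO(D_j)]$ rather than $[L_j^{\vee}]$, which only changes the isomorphism by the involution $t_j\mapsto t_j^{-1}$ preserving $I^G_\Delta$.
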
 
\begin{proof} 
It follows from Theorem~\ref{thm:main-thm-1} that the map
$\bdK _0([X/T]) {\underset {R(T)} \otimes} R(G) \xrightarrow{\cong} \bdK _0([X/G])$
is a ring isomorphism.
Since $G$ is a diagonalizable subgroup of $T$ ($[X/G]$ is reduced), the ring 
$R(G)$ is a quotient of $R(T)$ by the ideal 
$J^G_{\Delta} = \left(\chi - 1, \chi \in (T/G)^{\vee}\right)$
({\sl cf.} Lemma~\ref{lem:Groupring}). 
This implies that 
\begin{equation}\label{eqn:BH1}
\bdK _0([X/G]) \cong \frac{\bdK _0([X/T])}{J^G_{\Delta}\bdK _0([X/T])}.
\end{equation}

If we let $\Delta(1) = \{\rho_1, \cdots , \rho_d\}$, then
for each $1 \le j \le d$, there is a unique $T$-equivariant
line bundle $L_j$ on $X$ which has a $T$-equivariant section 
$s_{j} : X \to L_{j}$ and whose zero locus is the orbit
closure $V_j = \ov{O_{\rho_j}}$. Then every character $\chi \in T^{\vee}$
acts on $\bdK _0([X/T])$ by multiplication with the element
$(\stackrel{d}{\underset{j = 1}\prod} ([L_{j}])^{<\chi, v_j>})$
({\sl cf.} \cite[Proposition~4.3]{SU}).
We conclude that there is a ring isomorphism
\begin{equation}\label{eqn:BH2}
\frac{\bdK _0([X/T])}
{\left(\stackrel{d}{\underset{j = 1}\prod} 
([L^{\vee}_{j}])^{<-\chi, v_j>} - 1, \ \chi \in (T/G)^{\vee} \right)}
\xrightarrow{\cong} \bdK _0([X/G]).
\end{equation} 

If $I^T_{\Delta}$ denotes the ideal of $\Z[t^{\pm 1}_1, \cdots , t^{\pm 1}_d]$ 
generated by the relations (1) above, then  
it follows from \cite[Theorem~6.4]{VV} that there is a ring isomorphism 
\begin{equation}\label{eqn:BH3}
\frac{\Z[t^{\pm 1}_1, \cdots , t^{\pm 1}_d]}{I^T_{\Delta}} \xrightarrow{\cong}
\bdK _0([X/T]). 
\end{equation} 

Setting $\phi(t_j) = [L^{\vee}_{j}]$,  
we obtain the isomorphism ~\eqref{eqn:BH0} by combining ~\eqref{eqn:BH2}
and ~\eqref{eqn:BH3}.
\end{proof}

\begin{remk}\label{remk:non-red}
If $[X/G]$ is not a reduced stack and there is an exact sequence
\[
0 \to H \to G \to F \to 0
\]
where $F = {\rm Im}(\phi)$, then the stack $[X/G]$ is isomorphic
to $[X/F] \times \mathfrak{B}_H$. In this case, one obtains an
isomorphism $\bdK _*([X/G]) \cong \bdK _*([X/F]) {\underset {R(F)} \otimes} R(G)$
({\sl cf.} \cite[Lemma~5.6]{Thomason1}).
In particular, if $H$ is a torus, one obtains
$\bdK _*([X/G]) \cong  \bdK _*([X/F]) {\underset {\Z} \otimes} R(H)$.
Thus, we see that the calculation of the K-theory of a (generically stacky) 
toric stack can be easily reduced to the case of reduced stacks.
\end{remk}

\section{A K{\"u}nneth formula and its consequences}
\label{section:Kunneth}
Our goal in this section is to prove Theorem~\ref{thm:main-thm-2} and
give applications.
We shall deduce this theorem from a K{\"u}nneth spectral sequence for the 
equivariant K-theory for the action of diagonalizable groups.
A similar spectral sequence for topological K-theory was constructed long time ago by
Hodgkin \cite{Hodgkin} and Snaith \cite{Snaith}. 
A spectral sequence of this kind in the non-equivariant setting was
constructed by the first author in \cite[Theorem~4.1]{J01}.

\subsection{K{\"u}nneth formula}\label{subsection:KFormula}
Suppose  that $X$ and $X'$ are schemes acted upon by a linear algebraic group 
$G$. In this case, the flatness of $X$ and $X'$ over $k$ implies that
the spectra $\bdG ([X/G])$ and $\bdG ([{X'}/G])$ are module spectra over
the ring spectrum $\bdK ([{\Spec(k)}/G])$. This flatness also ensures that
the external tensor product of coherent 
$\sO$-modules induces a pairing $\bdG ([X/G]) \wedge \bdG ([{X'}/G]) \to 
\bdG ([(X{ \times}{X'})/G])$, where the action of $G$ on $X{\times}{X'}$ 
is the diagonal action. This pairing is compatible with 
the structure of the above spectra as module spectra over the ring spectrum 
$\bdK ([{\Spec(k)}/G])$ so that one obtains the induced pairing:
\[
p_1^* \wedge p_2^*: \bdG ([X/G]) {\overset L {\underset {\bdK ([{\Spec(k)}/ G])} 
\wedge}} \bdG ([{X'}/G]) \to \bdG ([(X \times {X'})/ G]).
\]
This is a map of ring spectra if $X$ and $X'$ are smooth.

\begin{prop}\label{prop:Kunneth-Linear-case}
Let $T$ be a split torus and let $X, X'$ be in $\sV_T$ such that $X$ is 
$T$-linear. Let $\phi: G \to T$ be a morphism of diagonalizable groups
such that $G$ acts on $X$ and $X'$ via $\phi$.  
Then the natural map of spectra
\begin{equation}\label{eqn:gen.weak.eq*0}
\bdG ([X/G]) {\overset L {\underset {\bdK ([{\Spec(k)}/G])} \wedge}} 
\bdG ([X'/G]) \to \bdG ([(X\times X')/G])
\end{equation} 
is a weak-equivalence.

In particular, there exists a first quadrant spectral sequence 
\begin{equation}\label{eqn:MT2*1}
E^2_{s,t} = {\Tor}^{\bdK ^G_*(k)}_{s,t}(\bdG _*([X/G]), \bdG _*([{X'}/G])) 
\Rightarrow \bdG _{s+t}([(X \times X')/G]).
\end{equation}
\end{prop}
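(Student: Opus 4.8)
The plan is to establish the weak equivalence \eqref{eqn:gen.weak.eq*0} by ascending induction on the integer $n$ for which $X$ is $T$-equivariantly $n$-linear, following the same scheme as the proof of \propref{prop:Linear-case}. Only $X$ is decomposed in this argument; the auxiliary scheme $X'$ stays fixed and is merely carried along. Throughout I write $\bdK^G$ for $\bdK([\Spec(k)/G])$.

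For the base case $n = 0$ I may assume $X \cong \Spec({\rm Sym}(V^*))$ for a finite-dimensional rational representation $V$ of $T$, the empty case being trivial. The projections $X \to \Spec(k)$ and $X \times X' \to X'$ are $G$-equivariant vector bundles, so homotopy invariance of equivariant G-theory yields weak equivalences $\bdG([X/G]) \simeq \bdK^G$ and $\bdG([(X\times X')/G]) \simeq \bdG([X'/G])$. Under these the left side of \eqref{eqn:gen.weak.eq*0} collapses to $\bdK^G {\overset L {\underset {\bdK^G} \wedge}} \bdG([X'/G]) \simeq \bdG([X'/G])$, and the map \eqref{eqn:gen.weak.eq*0} is identified with this equivalence, settling the base case.

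For the inductive step, let $n > 0$. By \defref{defn:linear} there are two configurations. In the first, there is a $G$-invariant closed immersion $Y \subseteq X$ with open complement $U$, both $(n-1)$-linear; then $Y \times X'$ is closed in $X \times X'$ with complement $U \times X'$. I would take the localization fiber sequence $\bdG([Y/G]) \to \bdG([X/G]) \to \bdG([U/G])$ and smash it with $\bdG([X'/G])$ over $\bdK^G$; since smashing with a fixed module spectrum preserves fiber sequences, the result is again a fiber sequence, and the pairing $p_1^* \wedge p_2^*$ maps it to the localization fiber sequence for the triple $(Y \times X',\, X \times X',\, U \times X')$. The outer two vertical maps are the weak equivalences supplied by the inductive hypothesis applied to the $(n-1)$-linear schemes $Y$ and $U$, so the middle map, which is \eqref{eqn:gen.weak.eq*0} for $X$, is a weak equivalence as well. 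The second configuration, an open immersion $X \subseteq Z$ with $Z$ and $Y = Z \setminus X$ both $(n-1)$-linear, is treated identically, now deducing the assertion for $X$ from those for $Z$ and $Y$.

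The step requiring genuine care is the commutativity of this ladder of fiber sequences, namely the naturality of the external pairing $p_1^* \wedge p_2^*$ with respect to the maps of the localization sequence. Compatibility with the proper pushforward along the closed immersion $Y \times X' \hookrightarrow X \times X'$ is the projection formula, and compatibility with the flat restriction to the open complement is base change; both are among the standard multiplicativity properties of equivariant G-theory recorded in \cite[\S~1]{Thomason1}. I expect this verification, rather than the inductive bookkeeping, to be the main obstacle. Granting the weak equivalence \eqref{eqn:gen.weak.eq*0}, the spectral sequence \eqref{eqn:MT2*1} then arises from the standard computation of the homotopy groups of a derived smash product of module spectra over a ring spectrum, exactly as \eqref{eqn:gen.weak.eq*1} was obtained in \thmref{thm:main-thm-1*}; see \cite[Theorem IV.4.1]{EKMM}.
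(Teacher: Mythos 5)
Your proposal is correct and follows essentially the same route as the paper: ascending induction on the linearity degree $n$ of $X$, reduction of the base case via homotopy invariance, and a ladder of localization fiber sequences smashed with the fixed module spectrum $\bdG([X'/G])$ over $\bdK^G$, with the spectral sequence extracted as in \cite[Theorem IV.4.1]{EKMM}. You are in fact slightly more explicit than the paper on the one point that deserves care, namely the commutativity of the ladder via the projection formula and flat base change.
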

\begin{proof}
We assume that $X$ is $T$-equivariantly $n$-linear for some $n \ge 0$.
This proposition is proved by an ascending induction on $n$, along the same 
lines as the proof of Proposition~\ref{prop:Linear-case}. We sketch the
argument.

If $n = 0$, then $X \cong \A^n$ and hence by the homotopy invariance,
we can assume that $X = \Spec(k)$, and the result is immediate in this case.
We now assume that $n > 0$. By the definition of $T$-linearity, there are
two cases to consider: 
\begin{enumerate}
\item
There exists a $T$-invariant closed subscheme $Y$ of $X$ with complement $U$
such that $Y$ and $U$ are $T$-equivariantly $(n-1)$-linear.
\item
There exists a $T$-scheme $Z$ which contains $X$ as a $T$-invariant open
subscheme such that $Z$ and $Y = Z \setminus X$ are $T$-equivariantly 
$(n-1)$-linear.
\end{enumerate}

In the first case, the localization fiber sequence in equivariant G-theory
gives us a commutative diagram of fiber sequences in the homotopy
category of spectra:
\[
\xymatrix@C.8pc{
{\bdG ([X'/G]){\overset L {\underset {\bdK ^G} \wedge}} 
\bdG ([Y/G])} \ar@<1ex>[r] \ar@<-1ex>[d] & 
{\bdG ([X'/G]){\overset L {\underset {\bdK ^G} \wedge}} \bdG ([X/G])} 
\ar@<1ex>[r] \ar@<-1ex>[d] & {\bdG ([X'/G]){\overset L {\underset {\bdK ^G} \wedge}} 
\bdG ([U/G])} \ar@<-1ex>[d] \\
{\bdG ([(Y \times X')/G])} \ar@<1ex>[r] & {\bdG ([(X \times X')/G])} \
\ar@<1ex>[r] &{\bdG ([(U \times X')/G])}.}
\]
The left and the right vertical maps are weak equivalences by the induction
on $n$.
We conclude that the middle vertical map is a weak equivalence.
The second case is proved in the same way where we now use induction
on $Y$ and $Z$ (see the proof of Proposition~\ref{prop:Linear-case}).

The existence of the spectral sequence now follows along 
standard lines (see for example, \cite[Theorem IV.4.1]{EKMM}). 
\end{proof}

\begin{remk}\label{remk:subtorus-case}
As an application of Proposition~\ref{prop:Kunneth-Linear-case},
one can obtain another proof of the special case of the
spectral sequence ~\eqref{eqn:gen.weak.eq1} when $G$ is a closed subgroup of 
$T$. This is done by taking $G = T$, $X' = T/G$ in  ~\eqref{eqn:MT2*1}
and using the Morita weak equivalences $\bdG ([{X'}/T]) \cong \bdG ([\Spec(k)/G])$ and
$\bdG ([(X \times X')/T]) \cong \bdG ([X/G])$. Notice that $X' = T/G$ is $T$-linear
by Proposition~\ref{prop:lin-elem}.
\end{remk}

\begin{cor}[K{\"u}nneth decomposition]\label{cor:Kunneth-dec}
Let $T$ be a split torus over $k$ and let $X$ be a $T$-linear scheme. 
Then the class of the diagonal 
$[\Delta] \in \bdG _0([(X \times X)/G])$ admits
a strong K{\"u}nneth decomposition, i.e., 
may be written as $\stackrel{n}{\underset{i =1}\Sigma}
 p_1^*(\alpha_i) \otimes p_2^*(\beta_i)$, where 
$\alpha_i, \beta_i \in \bdG _0([X/G])$.
\end{cor}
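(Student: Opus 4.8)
The plan is to deduce this directly from Proposition~\ref{prop:Kunneth-Linear-case} by taking $X' = X$. Since $X$ is $T$-linear, that proposition yields a first-quadrant spectral sequence
\[
E^2_{s,t} = {\Tor}^{\bdK ^G_*(k)}_{s,t}(\bdG _*([X/G]), \bdG _*([X/G]))
\Rightarrow \bdG _{s+t}([(X \times X)/G]),
\]
arising from a weak equivalence whose induced map on homotopy groups is the one determined by the external product pairing $p_1^* \wedge p_2^*$. The first step is therefore to record that the bottom edge homomorphism of this spectral sequence is precisely the map sending $\alpha \otimes \beta$ to $p_1^*(\alpha) \otimes p_2^*(\beta)$ in $\bdG _0([(X \times X)/G])$.

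Next I would examine the corner $(s,t) = (0,0)$. Because the spectral sequence lives in the first quadrant, no nonzero differential can enter or leave $E^\bullet_{0,0}$, so $E^\infty_{0,0} = E^2_{0,0}$; moreover the convergent filtration on $\bdG _0([(X \times X)/G])$ in total degree $0$ has a single nonzero graded piece, namely $E^\infty_{0,0}$. Consequently the edge homomorphism $E^2_{0,0} \to \bdG _0([(X \times X)/G])$ is an isomorphism, and in particular surjective.

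It then remains to identify $E^2_{0,0}$ with the expected tensor product. The term $E^2_{0,0}$ is the degree-$0$ part of the graded tensor product $\bdG _*([X/G]) {\underset {\bdK ^G_*(k)} \otimes} \bdG _*([X/G])$; since the equivariant G-theory spectra appearing here are connective (they have no negative homotopy groups, as already noted in the text), the only contribution in degree $0$ is $\bdG _0([X/G]) {\underset {R(G)} \otimes} \bdG _0([X/G])$, using $R(G) = \bdK ^G_0(k)$. Combining this with the surjectivity of the edge map shows that $[\Delta]$ lies in the image of the external product pairing, so it can be written as $\stackrel{n}{\underset{i =1}\Sigma} p_1^*(\alpha_i) \otimes p_2^*(\beta_i)$ with $\alpha_i, \beta_i \in \bdG _0([X/G])$, which is the desired strong K{\"u}nneth decomposition.

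The only delicate point—and hence the main thing to get right—is the bookkeeping that matches the purely homotopy-theoretic edge homomorphism of the spectral sequence with the geometrically defined external product $p_1^* \wedge p_2^*$, together with the verification that $E^2_{0,0}$ is the claimed tensor product rather than picking up contributions from other bidegrees. Both of these rest on the connectivity of the equivariant G-theory spectra, so no genuinely new input beyond Proposition~\ref{prop:Kunneth-Linear-case} is needed.
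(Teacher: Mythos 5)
Your proposal is correct and follows exactly the route the paper takes: the paper's own proof simply cites the spectral sequence of Proposition~\ref{prop:Kunneth-Linear-case} to obtain $\bdG _0([(X \times X) / G]) \cong \bdG _0([X/G]){\underset {R(G)} \otimes} \bdG _0([X/G])$ and then takes $X = X'$. You have merely spelled out the corner/edge-homomorphism and connectivity bookkeeping that the paper leaves implicit.
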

\begin{proof} The spectral sequence of 
Proposition~\ref{prop:Kunneth-Linear-case} shows in general that 
\begin{equation}\label{eqn:Ku-0}
\bdG _0([(X \times X') / G]) 
\cong \bdG _0([X/G]){\underset {R(G)} \otimes} \bdG _0([{X'}/G]).
\end{equation}
The K{\"u}nneth decomposition now follows by taking $X = X'$.
\end{proof}

\vskip .3cm 

{\bf{Proof of Theorem~\ref{thm:main-thm-2}:}}
Let $X$ be a smooth and projective $T$-linear scheme.
Since the group $G$ is diagonalizable, we apply \cite[Lemma~5.6]{Thomason1}
to obtain the isomorphism:
\begin{equation}\label{eqn:main-2*0}
R(G) {\underset {\Z}\otimes} \bdK _*(k) \xrightarrow{\cong} \bdK ^G_*(k)
\end{equation} 
and this provides the first isomorphism of ~\eqref{eqn:main-2-0}.
Since $X$ is smooth, we can identify $G_*([X/G])$ with $\bdK _*([X/G])$.

Let $[x] \in \bdK _*([X/G])$. Then $[x]= p_{1*} (\Delta \circ p_2^*([x]))$. 
Now we use the K{\"u}nneth decomposition for $\Delta$ obtained 
in Corollary~\ref{cor:Kunneth-dec}  and the projection formula 
(since $X$ is projective) to identify the last term with 
$\stackrel{n}{\underset{i =1}\sum} 
\alpha _i \circ p_{1*}p_2^*(\beta _i\circ [x])$. 
The Cartesian square
\[
\xymatrix@C2pc{
X\times X \ar[r]^>>>>>>{p_2} \ar[d]_{p_1} & X \ar[d]^{p'_1} \\
X \ar[r]_<<<<<<{p'_2} & \Spec(k)}
\]
and the flat base-change for the equivariant G-theory 
show that $p_{1*}( p_2^*(\beta_i \circ [x]))$ identifies  
with ${p_{2}'}^*{p_1'}_*(\beta _i \circ [x])$ so that
\begin{equation}\label{eqn:surj.diag}
[x]= \stackrel{n}{\underset{i = 1}\sum} 
\alpha _i \circ {p_2'}^*({p_1'}_*(\beta _i \circ [x])).
\end{equation} 
The class  ${p_1'}_*(\beta _i \circ [x]) \in \bdG ([{\Spec(k)}/G])$. 
It follows that the classes $\{\alpha_i \}$ generate $\pi_*(G[X/G])$ as a 
module over $\bdK ^G_*(k)$. This shows that the map in question is surjective.

Next we prove the injectivity of the map $\rho$. 
The key is the following diagram:
\begin{equation}\label{eqn:inj.dig}
\xymatrix@C2pc{
\bdK _*([X/G]) \ar@<1ex>[dr]_{\mu} &
{\bdK _0([X/G]) {\underset {\bdK ^G_0(k)} \otimes} \bdK ^G_*(k)} 
\ar[l]_<<<<<<{\rho} \ar@<1ex>^{\alpha}[d] \\
& {{\Hom}_{\bdK ^G_0(k)}(\bdK _0([X/G]), \bdK ^G_*(k))}}
\end{equation}
where $\alpha(x \otimes y)$ (resp. $\mu(x)$, $ x \in \bdK _*([X/G])$) 
is defined by $ \alpha (x \otimes y) =$ the map 
$x' \mapsto f_*(x' \circ x) \circ y$ (resp., the map $x' \mapsto
f_*(x' \circ x)$). 
Here, $f$ denotes the projection map $X \to \Spec(k)$ and
$x' \circ x$ denotes the product in the ring $\bdK _*([X/G])$.
The commutativity of the above diagram is
an immediate consequence of the projection formula: observe that $\rho(x
\otimes y) = x \circ f^*(y)$. Therefore, to show that 
$ \rho$ is injective, it suffices to show that the map $\alpha$ is
injective.
For this, we define a map $\beta$ to be a splitting for
$\alpha$ as follows. 

If $\phi \in {\Hom}_{\bdK ^G_0(k)}(\bdK _0([X/G]), \bdK ^G_*(k))$,
we let
$\beta(\phi) = \stackrel{n}{\underset{i =1}\sum} 
\alpha_{i} \otimes (\phi(\beta_{i}))$.
Observe that 
\[
\begin{array}{lll}
\beta (\alpha (x \otimes y)) & = &
\beta \left(the \quad map
\quad {x'} \rightarrow f_*(x' \circ x) \circ y\right) \\
& = & (\stackrel{n}{\underset{i =1}\sum} 
\alpha_{i} \otimes f_*(\beta_{i} \cdot x)) \circ y.
\end{array}
\]

We next observe that $f_*( \beta_{i} \cdot x) \in \bdK ^G_0(k)$, so that we may
write the last term as $(\stackrel{n}{\underset{i =1}\sum}  
\alpha_{i} . f^*f_*( \beta_{i} \cdot x)) \circ y$. 
By ~\eqref{eqn:surj.diag}, the last term $=x \circ y$.
This proves that $ \alpha$ is injective and hence that
so is $\rho$. This completes the proof. 
$\hspace*{3cm} \hfil \square$

\vskip .4cm 

The following result generalizes ~\eqref{eqn:gen.weak.eq2} to a bigger
class of schemes.

\begin{cor}\label{cor:Base-change}
Let $T$ be a split torus over $k$ and let 
$X$ be a smooth and projective $T$-linear scheme.
Let $\phi: G \to T$ be a morphism of diagonalizable groups
such that $G$ acts on $X$ via $\phi$. 
Then the map
\[
\bdK _*([X/T]) {\underset {R(T)} \otimes} R(G) \to \bdK _*([X/G])
\]
is an isomorphism.
In particular, $\bdK _0([X/G])$ is a free $R(G)$-module $($and hence a 
free $\Z$-module$)$ if $X$ is $T$-cellular.
\end{cor}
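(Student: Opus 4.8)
The plan is to bootstrap the two previously established main theorems: the edge-map isomorphism of \thmref{thm:main-thm-1} will handle the degree-zero part, while \thmref{thm:main-thm-2}, applied to both $T$ and $G$, will upgrade this to all of $\bdK_*$. First I would pin down the map in the statement as the canonical one. The morphism $\phi$ induces a map of stacks $[X/G] \to [X/T]$ and hence a restriction homomorphism $r \colon \bdK_*([X/T]) \to \bdK_*([X/G])$ of graded rings, which is $R(T)$-linear once $R(T)$ acts on the target through $R(T) \to R(G)$. Thus $r$ factors through the $R(G)$-linear ring map
\[
\Psi \colon \bdK_*([X/T]) {\underset {R(T)} \otimes} R(G) \to \bdK_*([X/G]), \qquad \Psi(a \otimes \lambda) = r(a)\cdot \bar\lambda,
\]
where $\bar\lambda$ denotes the image of $\lambda \in R(G) = \bdK ^\bdG _0(k)$ in $\bdK_*([X/G])$. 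This $\Psi$ is the map to be shown an isomorphism.

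Next I would observe that $\Psi$ is linear over $\bdK_*(k)$: the map $r$ commutes with pullback from $\Spec(k)$ by naturality, so it is $\bdK_*(k)$-linear, and this passes to $\Psi$. Since $X$ is smooth, projective and $T$-linear, \thmref{thm:main-thm-2} applies both to $G$ and, taking $G = T$ with $\phi = \id$, to $T$, giving $\bdK_*(k)$-algebra isomorphisms
\[
\bdK_*([X/T]) \cong \bdK_0([X/T]) {\underset {\Z} \otimes} \bdK_*(k), \qquad \bdK_*([X/G]) \cong \bdK_0([X/G]) {\underset {\Z} \otimes} \bdK_*(k),
\]
in which the $R(T)$- and $R(G)$-module structures are carried entirely by the degree-zero factors. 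Base-changing the first isomorphism along $R(T) \to R(G)$ and commuting the inert factor $\bdK_*(k)$ past the tensor over $R(T)$ (pure associativity of tensor products) yields
\[
\bdK_*([X/T]) {\underset {R(T)} \otimes} R(G) \cong \left(\bdK_0([X/T]) {\underset {R(T)} \otimes} R(G)\right) {\underset {\Z} \otimes} \bdK_*(k).
\]

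Both the source and the target of $\Psi$ are therefore of the form (degree-zero part) $\otimes_{\Z} \bdK_*(k)$, and since $\Psi$ is $\bdK_*(k)$-linear it must coincide with $\Psi_0 \otimes_{\Z} \id_{\bdK_*(k)}$, where $\Psi_0$ is its degree-zero component $\bdK_0([X/T]) \otimes_{R(T)} R(G) \to \bdK_0([X/G])$. The key point is that $\Psi_0$ is precisely the edge-map isomorphism of \thmref{thm:main-thm-1}: here I use that $X$ is smooth, so $\bdG _0 = \bdK_0$, and that $\Psi_0$ is the natural multiplication map $a \otimes \lambda \mapsto r(a)\cdot \bar\lambda$, which is exactly how that edge map is described. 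As tensoring an isomorphism with $\id$ is again an isomorphism, $\Psi$ is an isomorphism. I expect the only genuine subtlety, and hence the main obstacle, to be the bookkeeping in the previous step—verifying that all the module structures line up so that $\Psi$ really does split as $\Psi_0 \otimes \id$ over $\bdK_*(k)$; everything else is a formal consequence of the cited theorems.

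For the final assertion, suppose $X$ is $T$-cellular. Then it is $T$-linear by \propref{prop:lin-elem}(3), so the isomorphism just proved applies and, in degree zero, gives $\bdK_0([X/G]) \cong \bdK_0([X/T]) \otimes_{R(T)} R(G)$. For $T$-cellular $X$ the module $\bdK_0([X/T])$ is free over $R(T)$—one runs the localization sequences of the cell filtration, each cell contributing a free rank-one summand (\lemref{lem:linear-P})—so its base change $\bdK_0([X/G])$ is free over $R(G)$. Finally $R(G)$ is itself a free $\Z$-module, being a tensor product of rings of the form $\Z[t,t^{-1}]$ and $\Z[s]/(s^n-1)$ since $G$ is diagonalizable; hence $\bdK_0([X/G])$ is free over $\Z$ as well.
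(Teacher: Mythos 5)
Your proposal is correct and follows essentially the same route as the paper: both proofs apply Theorem~\ref{thm:main-thm-2} to $T$ and to $G$ to peel off the $\bdK_*(k)$ factor, and then invoke the degree-zero edge-map isomorphism $\bdK_0([X/T])\otimes_{R(T)}R(G)\cong\bdK_0([X/G])$ from Theorem~\ref{thm:main-thm-1}; your extra care in checking that the composite identification agrees with the canonical map $\Psi$ is a point the paper passes over silently. The only minor divergence is in the freeness assertion, where the paper applies Lemma~\ref{lem:linear-P} directly to the $G$-action rather than base-changing the free $R(T)$-module $\bdK_0([X/T])$, but both arguments are valid.
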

\begin{proof}
To prove the first part of the corollary, 
we trace through the sequence of isomorphisms:
\[
\begin{array}{lll}
\bdK _*([X/T]) {\underset {R(T)} \otimes} R(G) & {\cong} &
\left(\bdK ^T_*(k) {\underset {R(T)} \otimes} \bdK _0([X/T])\right) 
{\underset {R(T)} \otimes} R(G) \\
& {\cong} &
\left(\bdK _*(k) {\underset {\Z} \otimes} R(T)\right)
{\underset {R(T)} \otimes} \left( \bdK _0([X/T]) 
{\underset {R(T)} \otimes} R(G)\right) \\ 
& {\cong}^{\dag} & 
\bdK _*(k) {\underset {\Z} \otimes} \bdK _0([X/G]) \\
& {\cong} & 
\bdK _*([X/G]).
\end{array}
\]

The first and the last isomorphisms in this sequence follow from 
Theorem~\ref{thm:main-thm-2} and the isomorphism ${\cong}^{\dag}$ follows from
Theorem~\ref{thm:main-thm-1}. This proves the first part of the corollary.
If $X$ is $T$-cellular, the freeness of $\bdK _0([X/G])$ as an $R(G)$-module follows 
from Lemma~\ref{lem:linear-P}. 
\end{proof}

\begin{remk}\label{remk:freeness}
In the special case when $[X/G]$ is a smooth toric Deligne-Mumford stack 
(with $X$ projective), the freeness
of $\bdK _0([X/G])$ as $\Z$-module was earlier shown in \cite[Theorem~2.2]{Hua}
and independently in \cite{GHHKK} using symplectic methods.
It is known ({\sl cf.} \cite[Example~4.1]{Hua}) that the freeness
property may fail if $X$ is not projective. 
\end{remk}

\subsection{K-theory of weighted projective spaces}
\label{subsection:WPS}
In the past, there have been many attempts to study the K-theory and
Chow rings of weighted projective spaces. However, there are only a few
explicit computations in this regard. We end this section with an explicit
description of the integral higher K-theory of stacky
weighted projective spaces. 
These are examples of toric stacks, where the spectral sequence
~\eqref{eqn:gen.weak.eq1} degenerates even though $\bdK_0([X/T])$ is not
a projective $R(T)$-module.
We also describe the rational higher G-theory
of weighted projective schemes as another application of 
Theorem~\ref{thm:main-thm-2}.

\subsubsection{Weighted projective spaces}
Let $\underline{q} = \{q_0, \cdots , q_n\}$ be an ordered set of positive 
integers and let $d = gcd(q_0, \cdots , q_n)$.
This ordered set of positive integers gives rise to a morphism of tori
$\phi: \G_m \to (\G_m)^{n+1}$ given by $\phi(\lambda) = (\lambda^{q_0},
\cdots, \lambda^{q_n})$.

The (stacky) weighted projective space $\P(q_0, \cdots ,q_n)$ is the stack  
$[{(\A^{n+1}_k \setminus \{0\})}/{\G_m}]$, where $\G_m$ acts on 
$\A^{n+1}_k$ by $\lambda \cdot (a_0, \cdots , a_n) =
(\lambda^{q_0}a_0, \cdots , \lambda^{q_n} a_n)$. Notice that $\A^{n+1} \setminus
\{0\}$ is a toric variety with dense torus $T = (\G_m)^{n+1}$ acting by
the coordinate-wise multiplication. We see that $\P(\underline{q})$ is the
toric stack associated to the data $((\A^{n+1}_k \setminus \{0\}), 
\G_m \xrightarrow{\phi} T)$. It is known that $\P(\underline{q})$ is a 
Deligne-Mumford toric stack and is reduced (an orbifold) if and only if
$d = 1$.

\subsubsection{{\rm K}-theory of $\P(\underline{q})$}
To describe the higher K-theory of $\P(\underline{q})$, we consider
$\A^{n+1}$ as the toric variety with  dense torus $T = (\G_m)^{n+1}$ acting by
the coordinate-wise multiplication. 
Let $V$ be the $(n+1)$-dimensional representation of $T$ which
represents $\A^{n+1}$ as the toric variety. 
Let $\iota: \Spec(k) \to \A^{n+1}$
and $j: U \to \A^{n+1}$ be the $T$-invariant closed and open inclusions,
where we set $U = \A^{n+1} \setminus \{0\}$. 
Observe that $V$ is the $T$-equivariant normal bundle of 
$\Spec(k)$ sitting inside $\A^{n+1}$ as the origin.

We have the localization exact sequence:
\begin{equation}\label{eqn:WPS0}
\cdots \to \bdK_i([{\Spec(k)}/{\G_m}]) \xrightarrow{\iota_*} 
\bdK_i([{\A^{n+1}}/{\G_m}]) \xrightarrow{j^*} 
\bdK_i([{U}/{\G_m}]) \to \cdots .
\end{equation}

Our first claim is that this sequence splits into short exact sequences
\begin{equation}\label{eqn:WPS1}
0 \to \bdK_i([{\Spec(k)}/{\G_m}]) \xrightarrow{\iota_*} 
\bdK_i([{\A^{n+1}}/{\G_m}]) \xrightarrow{j^*} 
\bdK_i([{U}/{\G_m}]) \to 0
\end{equation}
for each $i \ge 0$.

Using \cite[Proposition~4.3]{VV}, it suffices to show that
$\lambda_{-1}(V) = \stackrel{n}{\underset{i=0}\sum} (-1)^i[\wedge^i(V)]$
is not a zero-divisor in the ring $\bdK_*([{\Spec(k)}/{\G_m}])$.
However, we can write $V = \stackrel{n}{\underset{i = 0}\oplus} V_i$,
where $\G_m$ acts on $V_i \cong k$ by $\lambda \cdot v = \lambda^{q_i}v$.
Since each $q_i$ is positive, we see that no irreducible factor of $V$ is
trivial. It follows from \cite[Lemma~4.2]{VV} that $\lambda_{-1}(V)$ is not
a zero-divisor in the ring $\bdK_*([{\Spec(k)}/{\G_m}])$, and hence
~\eqref{eqn:WPS1} is exact. We have thus proven our claim.

We can now use ~\eqref{eqn:WPS1} to compute $\bdK_*([U/{\G_m}])$.
We first observe that the map $\bdK_*([{\Spec(k)}/{\G_m}]) 
\to \bdK_*([{\A^{n+1}}/{\G_m}])$ induced by the structure map is an 
isomorphism by the homotopy invariance. So we can identify 
the middle term of ~\eqref{eqn:WPS1} with $\bdK_i([{\Spec(k)}/{\G_m}])$.
Furthermore, it follows from the Self-intersection formula
(\cite[Theorem~2.1]{VV}) that the map $\iota_*$ is
multiplication by $\lambda_{-1}(V)$ under this identification.

Since $V = \stackrel{n}{\underset{i = 0}\oplus} V_i$, we get
$\lambda_{-1}(V) = \stackrel{n}{\underset{i = 0}\prod} \lambda_{-1}(V_i)$.
Furthermore, since the class of $V_i$ in $R(\G_m) = \Z[t^{\pm 1}]$ is $t^{q_i}$,
we see that $\lambda_{-1}(V_i) = 1 - t^{q_i}$. We conclude that
$\lambda_{-1}(V)  = \stackrel{n}{\underset{i = 0}\prod} (1 - t^{q_i})$.
We have thus proven: 

\begin{thm}\label{thm:WPS-main} There is a ring isomorphism
\[
\frac{\bdK_*(k)[t^{\pm 1}]}{\stackrel{n}{\underset{i = 0}\prod} (1 - t^{q_i})}
\xrightarrow{\cong} \bdK_*(\P(\underline{q})).
\]
\end{thm}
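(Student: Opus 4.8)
The plan is to compute $\bdK_*(\P(\underline{q}))$ directly from the localization sequence, now that the author has done all the heavy lifting: the short exact sequences \eqref{eqn:WPS1} are already established, and the map $\iota_*$ has been identified as multiplication by $\lambda_{-1}(V) = \prod_{i=0}^n (1 - t^{q_i})$ on the ring $\bdK_*([{\Spec(k)}/{\G_m}])$. So the final theorem is essentially a matter of reading off the cokernel of this multiplication map and verifying it carries the claimed ring structure.

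\medskip

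First I would recall that $\bdK_*([{\Spec(k)}/{\G_m}]) = \bdK^{\G_m}_*(k)$ is, by \cite[Lemma~5.6]{Thomason1}, the ring $\bdK_*(k) \otimes_{\Z} R(\G_m) = \bdK_*(k)[t^{\pm 1}]$, the Laurent polynomial ring over $\bdK_*(k)$. From the split short exact sequence \eqref{eqn:WPS1}, the third term $\bdK_*([U/{\G_m}]) = \bdK_*(\P(\underline{q}))$ is identified with the cokernel of $\iota_*$. Since the two outer terms are both identified (via homotopy invariance for the middle term) with $\bdK_*(k)[t^{\pm 1}]$, and $\iota_*$ is multiplication by $\lambda_{-1}(V)$, the cokernel is precisely
\[
\frac{\bdK_*(k)[t^{\pm 1}]}{\left(\stackrel{n}{\underset{i = 0}\prod} (1 - t^{q_i})\right)},
\]
which is exactly the left-hand side of the claimed isomorphism. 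The map $j^*$ from \eqref{eqn:WPS1} furnishes the surjection realizing this identification.

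\medskip

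The one point requiring care is that the isomorphism is asserted to be a \emph{ring} isomorphism, not merely an isomorphism of graded abelian groups. Here I would argue that $j^* : \bdK_*([{\A^{n+1}}/{\G_m}]) \to \bdK_*([U/{\G_m}])$ is a ring homomorphism, since pullback along the open immersion $j$ respects the product structure in K-theory. Because $j^*$ is surjective (by exactness of \eqref{eqn:WPS1}) with kernel the ideal generated by $\iota_*(1) = \lambda_{-1}(V)$ — the image of $\iota_*$ is an \emph{ideal} by the projection formula, as $\iota_*(x \cdot \iota^* y) = \iota_*(x) \cdot y$ — the induced map on the quotient ring is a ring isomorphism. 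The identification of the source with $\bdK_*(k)[t^{\pm 1}]$ as a ring is exactly the content of \cite[Lemma~5.6]{Thomason1}.

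\medskip

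The only genuine subtlety — and what I would flag as the main obstacle — is verifying that the image of $\iota_*$ is the \emph{ideal} generated by $\lambda_{-1}(V)$, rather than just the abelian subgroup generated by its multiples under the $\bdK_*(k)[t^{\pm 1}]$-action already used to split the sequence. Fortunately this is immediate from the module structure: the localization sequence is a sequence of $\bdK^{\G_m}_*(k)$-modules, and under the homotopy-invariance identification of the middle term with $\bdK^{\G_m}_*(k)$ itself, the map $\iota_*$ becomes a $\bdK^{\G_m}_*(k)$-module map sending $1$ to $\lambda_{-1}(V)$; hence its image is precisely the principal ideal $(\lambda_{-1}(V))$, and the cokernel is the quotient ring displayed above. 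This completes the identification, and the theorem follows.
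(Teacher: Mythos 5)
Your proposal is correct and follows essentially the same route as the paper: the theorem is precisely the summary of the preceding computation, namely reading off the cokernel of $\iota_*$ (identified as multiplication by $\lambda_{-1}(V)=\prod_{i=0}^n(1-t^{q_i})$ on $\bdK_*(k)[t^{\pm 1}]$) from the split short exact sequence \eqref{eqn:WPS1}. Your extra care in checking that the image of $\iota_*$ is the principal ideal (via the projection formula and the module structure) and that $j^*$ induces a ring map is a worthwhile explicit verification of points the paper leaves implicit, but it is not a different argument.
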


\begin{remk}\label{remk:Non-proj}
In the above calculations, we can replace $\G_m$ by the dense torus $T$
to get a similar formula. In this case, the exact sequence
~\eqref{eqn:WPS1} shows that $\bdK_0([(\A^{n+1} \setminus \{0\})/T])$
is a quotient of $R(T)$ and hence is not a projective $R(T)$-module.
\end{remk}

\subsubsection{{\rm G}-theory of weighted projective scheme}
The weighted projective scheme is the scheme theoretic
quotient of $\A^{n+1} \setminus \{0\}$ by the above action of $\G_m$.
This is the coarse moduli scheme of $\P(\underline{q})$.
We shall denote this scheme by $\wt{\P(\underline{q})}$.
It is known that this is a normal (but singular in general)
projective scheme. There was no computation available for the higher 
G-theory or K-theory of this schematic weighted projective space.
As an application of Theorem~\ref{thm:main-thm-2}, we now give a
simple description of the rational higher G-theory of $\wt{\P(\underline{q})}$.
We still do not know how to compute its K-theory.

In order to describe the higher G-theory of $\wt{\P(\underline{q})}$,
we shall use the following presentation of this scheme which allows us
to use our main results. We assume that the characteristic of $k$
does not divide any $q_i$.

The torus $T = {\mathbb G} ^n_m$ acts on $\P^n_k$
as the dense open torus by 
$(\lambda_1, \cdots , \lambda_n) \star [z_0, \cdots , z_n]
= [z_0, \lambda_1 z_1, \cdots , \lambda_n z_n]$.
Let $G = \mu_{q_0} \times \cdots \times \mu_{q_n}$ be the product of
finite cyclic groups. Then $G$ acts on $\P^n_k$ by 
$(a_0, \cdots , a_n) \bullet [z_0, \cdots , z_n]
= [a_0z_0, \cdots , a_nz_n]$. It is then easy to see that
$\wt{\P(\underline{q})}$ is isomorphic to the scheme ${\P^n_k}/G$.

Define $\phi : G \to T$ by $\phi(a_0, \cdots , a_n) = 
({a_1}/{a_0}, \cdots , {a_n}/{a_0})$.  
Then one checks that 
\[
\begin{array}{lll}
H  := {\rm Ker}(\phi) & = & \{(a_0, \cdots , a_n) \in G | a_0 = \cdots = a_n\} \\
& = & \{\lambda  \in \G_m| \lambda^{q_0} = 1 = \cdots = \lambda^{q_n}\} \\
& = & \{\lambda \in \G_m| \lambda^d = 1\} \\
& \cong & \mu_d. 
\end{array}
\]

Moreover, it is easy to see that 
\[
\begin{array}{lll}
(a_0, \cdots , a_n) \bullet [z_0, \cdots , z_n] & = &
[a_0z_0, \cdots , a_nz_n] \\
& = & [a^{-1}_0(a_0z_0), \cdots , a^{-1}_0(a_nz_n)] \\
& = & [z_0, ({a_1}/{a_0})z_1, \cdots , ({a_n}/{a_0})z_n] \\
& = & \phi(a_0, \cdots , a_n) \star  [z_0, \cdots , z_n].
\end{array}
\]
In particular, $G$ acts on $\P^n_k$ through $\phi$.
We conclude that $\mathfrak{X} = [\P^{n}_k/G]$ is a smooth toric Deligne-Mumford 
stack associated
to the data $(\P^n_k, G \xrightarrow{\phi} T)$ and there
is an isomorphism $\mathfrak{X} \cong [{\P^n_k}/F] \times 
{\mathfrak{B}}_{\mu_d}$, where $F = {\rm Im}(\phi)$.

\begin{thm}\label{thm:WPS-K-th}
There is a ring isomorphism 
\begin{equation}\label{eqn:WPS0}
\bdK _*(k) {\underset {\Z} \otimes} \frac{[t, t_0, \cdots , t_n]}
{((t-1)^{n+1}, t^{q_0}_0-1, \cdots , t^{q_n}_n-1)} 
\xrightarrow{\cong} \bdK _*(\mathfrak{X}).
\end{equation}
\end{thm}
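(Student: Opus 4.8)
The plan is to use Theorem~\ref{thm:main-thm-2} to reduce everything to a computation of $\bdK_0(\mathfrak{X})$, and then to identify this ring via the projective bundle structure of $\mathfrak{X}$ over the classifying stack of $G$. First I would record that $\P^n_k$ is a smooth projective toric variety whose dense torus is $T=(\G_m)^n$, hence it is $T$-equivariantly linear by Proposition~\ref{prop:lin-elem}. Since $G$ is diagonalizable and acts through $\phi\colon G\to T$, Theorem~\ref{thm:main-thm-2} applies and yields a ring isomorphism
\[
\bdK_0(\mathfrak{X}){\underset {\Z} \otimes}\bdK_*(k)\xrightarrow{\cong}\bdK_*(\mathfrak{X}).
\]
Moreover $\P^n_k$ is $T$-cellular (its $T$-fixed locus consists of the $n+1$ coordinate points, so the Bialynicki--Birula argument of \S~\ref{subsubsection:Spherical} applies), whence Corollary~\ref{cor:Base-change} shows that $\bdK_0(\mathfrak{X})$ is a free $R(G)$-module; this makes the passage through ${\underset {\Z}\otimes}\bdK_*(k)$ flat and ensures the graded ring on the left is exactly $\bdK_0(\mathfrak{X})$ placed in degree zero. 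It therefore remains to identify $\bdK_0(\mathfrak{X})$ with the displayed quotient of $\Z[t,t_0,\dots,t_n]$.

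For the core computation I would present $\mathfrak{X}=[\P^n_k/G]$ as the relative projective space $\P(W)$ of the rank $n+1$ bundle $W=\bigoplus_{i=0}^n W_i$ over $\mathfrak{B}_G=[\Spec(k)/G]$, where $G$ scales the homogeneous coordinate $z_i$ through the standard character of its factor $\mu_{q_i}$, so that $[W_i]=t_i$ in $R(G)=\Z[t_0,\dots,t_n]/(t_0^{q_0}-1,\dots,t_n^{q_n}-1)$. The projective bundle theorem then exhibits $\bdK_0(\mathfrak{X})$ as $R(G)[t]$, with $t=[\mathcal{O}(-1)]$ a unit, modulo the single relation coming from the Koszul resolution of the tautological inclusion $\mathcal{O}(-1)\hookrightarrow W$, namely
\[
\lambda_{-1}(W)=\prod_{i=0}^n\bigl(1-t_i\,t\bigr)=0.
\]
Equivalently, one may avoid the projective bundle theorem and instead combine the edge isomorphism $\bdK_0([\P^n_k/T]){\underset {R(T)}\otimes}R(G)\cong\bdK_0(\mathfrak{X})$ of Theorem~\ref{thm:main-thm-1} with the Vezzosi--Vistoli presentation \cite[Theorem~6.4]{VV} of $\bdK_0([\P^n_k/T])$ and base-change along $\phi^*\colon R(T)\to R(G)$; this produces the same relation. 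Adjoining the relations $t_i^{q_i}=1$ then recovers a presentation of $\bdK_0(\mathfrak{X})$ on the generators $t,t_0,\dots,t_n$, which I would match against the ideal in the theorem; note that the $\lambda_{-1}$-relation $\prod_{i=0}^n(1-t_i t)$ is the weighted analogue of the relation $\prod_{i=0}^n(1-t^{q_i})$ of Theorem~\ref{thm:WPS-main} and collapses to $(t-1)^{n+1}$ exactly when $q_0=\dots=q_n=1$.

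The step I expect to be the main obstacle is verifying that this relation generates the \emph{entire} relation ideal, equivalently that $\bdK_0(\mathfrak{X})$ is freely spanned over $R(G)$ by $1,t,\dots,t^n$. Since each $q_i$ is positive, every $W_i$ is a nontrivial character, and the relevant input is that $\lambda_{-1}(W)$ behaves like a non-zero-divisor, exactly as in the proof of Theorem~\ref{thm:WPS-main}; concretely, after clearing the unit $t$ the relation is monic of degree $n+1$ with unit constant term, so division with remainder makes $1,t,\dots,t^n$ an $R(G)$-basis, matching the rank $n+1$ predicted by $T$-cellularity. Compatibility of the ring structures is automatic, since every isomorphism invoked---Theorems~\ref{thm:main-thm-1} and~\ref{thm:main-thm-2} together with the projective bundle theorem---is an isomorphism of rings. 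Tensoring the resulting presentation with $\bdK_*(k)$ over $\Z$ then delivers the asserted description of $\bdK_*(\mathfrak{X})$.
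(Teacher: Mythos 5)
Your reduction to $\bdK_0$ via Theorem~\ref{thm:main-thm-2} and Corollary~\ref{cor:Base-change} is exactly the paper's first step, and computing $\bdK_0(\mathfrak{X})$ by viewing $[\P^n_k/G]$ as $\P(W)\to \mathfrak{B}_G$ with $W=\bigoplus_i k_{t_i}$ is a legitimate variant of the paper's route through $\bdK_0([\P^n_k/T])\otimes_{R(T)}R(G)$. The genuine gap is the step you defer to the end: ``matching'' your presentation $R(G)[t^{\pm1}]/\bigl(\prod_{i=0}^n(1-t_it)\bigr)$ against the ideal $((t-1)^{n+1},t_0^{q_0}-1,\dots,t_n^{q_n}-1)$. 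This cannot be done: the two rings are not isomorphic once some $q_i>1$. Already for $\underline q=(1,2)$ your presentation gives $\Z[t_1,t^{\pm1}]/(t_1^2-1,(1-t)(1-t_1t))$, which after $\otimes\,\Q$ is $\Q[\epsilon]/(\epsilon^2)\times\Q\times\Q$ (three points in the spectrum, matching the inertia-stack count for $[\P^1/\mu_2]$), whereas the ring in the statement becomes $\Q[\epsilon]/(\epsilon^2)\times\Q[\epsilon]/(\epsilon^2)$ (two points). Your own parenthetical remark --- that $\prod_i(1-t_it)$ collapses to $(t-1)^{n+1}$ exactly when all $q_i=1$ --- is precisely the obstruction, and it cannot be waved away.

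What this points to is a defect in the paper's own argument rather than in your projective-bundle step. The paper asserts that ``the projective bundle formula'' identifies $\bdK_0([\P^n_k/T])$ with $R(T)[t]/((t-1)^{n+1})$; but that is the answer for the \emph{trivial} $T$-action on $\P^n$. For the dense-torus action one has $\P^n=\P(k\oplus k_{\chi_1}\oplus\cdots\oplus k_{\chi_n})$, the relation is $\prod_{i=0}^n(1-\chi_i^{-1}t)$ with $\chi_0=1$, and no relinearization of $\sO(1)$ can make the factors coincide since the $\chi_i$ are distinct; base-changing along $\phi^*\colon R(T)\to R(G)$, $\chi_i\mapsto t_it_0^{-1}$, recovers your relation, not $(t-1)^{n+1}$. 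So to complete a correct proof you should keep your computation and correct the target presentation to $R(G)[t^{\pm1}]/\bigl(\prod_{i=0}^n(1-t_it)\bigr)$ --- which is consistent with the weighted relation $\prod_i(1-t^{q_i})$ of Theorem~\ref{thm:WPS-main} --- rather than attempt to identify it with the ideal as stated.
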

\begin{proof}
It follows from Corollary~\ref{cor:Base-change} and 
Theorem~\ref{thm:main-thm-2} that there is a ring isomorphism
\[
\bdK _*(k) {\underset {\Z} \otimes} \bdK _0([{\P^n_k}/T]) 
{\underset {R(T)} \otimes} R(G) \xrightarrow{\cong} 
\bdK _*(\mathfrak{X})).
\]
On the other hand, the projective bundle formula implies that the 
left side of this isomorphism is same as
$\bdK _*(k) {\underset {\Z} \otimes} \frac{R(T)[t]}{((t-1)^{n+1})}
{\underset {R(T)} \otimes} R(G)$  which in turn is isomorphic to
$\bdK _*(k) {\underset {\Z} \otimes} \frac{R(G)[t]}{((t-1)^{n+1})}$.
The theorem now follows from the isomorphism
$R(G) \cong \frac{\Z[t_0, \cdots , t_n]}{(t^{q_0}_0-1, \cdots , t^{q_n}_n-1)}$.  
\end{proof}

\begin{cor} There is an isomorphism
\[
\frac{\bdG_*(k)[t]}{((t-1)^{n+1})} \xrightarrow{\cong} 
\bdG_*\left(\wt{\P(\underline{q})}\right)
\]
with the rational coefficients.
\end{cor}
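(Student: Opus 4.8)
The plan is to deduce the corollary from the explicit description of $\bdK_*(\mathfrak{X})$ in Theorem~\ref{thm:WPS-K-th} by comparing the rational $G$-theory of the coarse moduli scheme $\wt{\P(\underline{q})}$ with that of the stack $\mathfrak{X} = [\P^n_k/G]$. Recall that $\wt{\P(\underline{q})} \cong \P^n_k/G$ and that $\P^n_k$ is smooth, so $\bdG_*([\P^n_k/G]) = \bdK_*(\mathfrak{X})$ is already computed. First I would invoke the standard comparison between the rational $G$-theory of a Deligne--Mumford stack which is a finite group quotient and the rational $G$-theory of its coarse moduli space: for the quotient of a scheme $X$ by a finite group $G$ whose order is prime to $\Char(k)$, the pullback along $[X/G] \to X/G$ identifies $\bdG_*(X/G)_\Q$ with the augmentation summand of the $R(G)$-module $\bdG^G_*(X)_\Q$, equivalently with $\bdG^G_*(X)_\Q {\underset {R(G)} \otimes} \Z$, where $R(G) \to \Z$ is the augmentation.

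The key input, and the main obstacle, is precisely this rational descent statement. Its validity here rests on two points: the order $|G| = \prod_i q_i$ is prime to $\Char(k)$ (guaranteed by our hypothesis that $\Char(k)$ divides no $q_i$), and the augmentation idempotent is already defined over $\Q$. Concretely, for each factor $\mu_{q_i}$ with $R(\mu_{q_i}) = \Z[t_i]/(t_i^{q_i}-1)$, the element $\tfrac{1}{q_i}(1 + t_i + \cdots + t_i^{q_i-1})$ is an idempotent in $R(\mu_{q_i})_\Q$ projecting onto the augmentation factor, so no roots of unity in $k$ are needed and the splitting is exact (no higher $\Tor$ intervenes rationally). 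I would cite the finite-quotient case of the Riemann--Roch / decomposition results for the $G$-theory of Deligne--Mumford stacks, in which the summand indexed by the identity element of $G$ is exactly the coarse-space contribution $(\bdG_*(X)_\Q)^G$.

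Granting this, the remainder is a routine base change. Starting from the ring isomorphism of Theorem~\ref{thm:WPS-K-th}, in which the subring $R(G) = \Z[t_0,\dots,t_n]/(t_0^{q_0}-1,\dots,t_n^{q_n}-1)$ acts by multiplication and the variable $t$ comes from the projective bundle factor $\bdK_0([\P^n_k/T])$, I would apply $- {\underset {R(G)} \otimes} \Z$ along the augmentation, i.e. set every $t_i = 1$. Since the presentation factors as $\tfrac{\Z[t]}{((t-1)^{n+1})} {\underset {\Z} \otimes} R(G)$, this base change collapses the $R(G)$-factor and leaves $\bdK_*(k) {\underset {\Z} \otimes} \tfrac{\Z[t]}{((t-1)^{n+1})}$. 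Passing to rational coefficients and using $\bdG_*(k) = \bdK_*(k)$ for the field $k$ then yields the desired ring isomorphism $\tfrac{\bdG_*(k)[t]}{((t-1)^{n+1})} \xrightarrow{\cong} \bdG_*(\wt{\P(\underline{q})})$ with rational coefficients. As a consistency check, one can instead compute the $G$-action on $\bdK_*(\P^n_k)$ directly: it is trivial since $G$ acts through the connected group $\mathrm{PGL}_{n+1}$ and hence fixes $\Pic(\P^n_k)$, so $(\bdG_*(\P^n_k)_\Q)^G = \bdK_*(\P^n_k)_\Q$ gives the same answer.
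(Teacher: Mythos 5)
Your proposal is correct and follows essentially the same route as the paper: both identify $\bdG_*\bigl(\wt{\P(\underline{q})}\bigr)_{\Q}$ with $\bdK_*(\mathfrak{X})\,{\underset{R(G)}\otimes}\,\Q$ along the augmentation and then specialize the presentation of Theorem~\ref{thm:WPS-K-th} by setting $t_i=1$. The only difference is in how the descent step is justified: where you cite the general untwisted-sector decomposition for rational G-theory of finite quotient Deligne--Mumford stacks, the paper constructs the pushforward $\sF\mapsto(\pi_*\sF)^G$ explicitly and proves it is an isomorphism by comparing, via the equivariant Riemann--Roch square of \cite{Krishna0} and \cite{EG2}, with the corresponding Chow-group pushforward, which is an isomorphism by \cite{EG1}.
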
 
\begin{proof}
All the groups in this proof will be considered with rational coefficients.
Let $\pi: \P^{n+1}_k \to \wt{\P(\underline{q})}$ be the quotient
map. The assignment $\sF \mapsto \left(\pi_*(\sF)\right)^G$ defines
a covariant functor from the category of $G$-equivariant coherent sheaves
on $\P^{n+1}_k$ to the category of ordinary coherent sheaves on
$\wt{\P(\underline{q})}$. Since the characteristic of $k$ does not divide
the order of $G$, this functor is exact and gives a push-forward map
$\pi_*: {\bdG}^G_*(\P^{n+1}_k) \to \bdG_*\left(\wt{\P(\underline{q})}\right)$. 

Let $\CH^G_*(\P^{n+1}_k)$ denote the equivariant higher Chow groups
of $\P^{n+1}_k$ (\cite{EG1}).
By \cite[Theorem~3]{EG1}, there is a push-forward map
$\ov{\pi}_*: \CH^G_*(\P^{n+1}_k) \to \CH_*\left(\wt{\P(\underline{q})}\right)$
which is an isomorphism.
It follows from \cite[Theorem~9.8, Lemma~9.1]{Krishna0} (see
also \cite[Theorem~3.1]{EG2}) that
there is a commutative diagram

\[
\xymatrix@C3pc{
{\bdG}^G_*(\P^{n+1}_k) {\underset{R(G)}\otimes} \Q \ar[r]^>>>>>>{\tau_G}
\ar[d]_{\pi_*} & \CH^G_*(\P^{n+1}_k) \ar[d]^{\ov{\pi}_*}_{\cong} \\
\bdG_*\left(\wt{\P(\underline{q})}\right) \ar[r]_{\tau}  &
\CH_*\left(\wt{\P(\underline{q})}\right),}
\]
where the horizontal arrows are the Riemann-Roch maps
which are isomorphisms 
(\cite[Theorem~8.6]{Krishna0}).  
It follows that the left vertical arrow is an isomorphism.
The corollary now follows by combining this isomorphism with
Theorem~\ref{thm:WPS-K-th}.
\end{proof}

\section{Toric stack bundles and the stacky Leray-Hirsch theorem}
\label{section:TS-bundle}
Toric bundle schemes and their cohomology were first studied by
Sankaran and Uma in \cite{SU}. They computed the Grothendieck group
of a toric bundle over a smooth base scheme. 
Jiang \cite{Jiang} studied smooth and simplicial Deligne-Mumford toric stack 
bundles over schemes and computed their Chow rings.  
These bundles are relative analogues of toric Deligne-Mumford stacks.
A description of the Grothendieck group of toric Deligne-Mumford stack bundles
was given by Jiang and Tseng in \cite{JTseng2}. 

In this section, we give a general definition of toric stack bundles over
a base scheme in such a way that every fiber of this bundle is a (generically 
stacky) toric stack in the sense of \cite{GSI}.
We prove a stacky version of the Leray-Hirsch theorem for the 
algebraic K-theory of stack bundles. This Leray-Hirsch theorem will
be used in the next section to describe the higher K-theory of toric stack
bundles.

\subsection{Toric stack bundles}\label{subsection:TS-bun-def}
Let $T$ be a split torus of rank $n$ and let $X$ be a scheme with a
$T$-action. Let $G$ be a diagonalizable group over $k$ and let
$\phi: G \to T$ be a morphism of algebraic groups over $k$.

Let $p: E \to B$ be a principal $T$-bundle over a scheme $B$. 
Let $G$ act on $E \times X$ 
by $g(e,x) = (e,gx):= (e, \phi(g)x)$ and let $T$ act on $E \times X$ via the
diagonal action. It is easy to see that these two actions commute
and the projection map $E \times X \to E$ is equivariant with respect to 
these actions. 

The commutativity of the actions ensures that the
$G$-action descends to the quotients $E(X) : = E \stackrel{T}{\times} X$ and 
$E/T = B$ such that the induced map of quotients 
$\ov{p} : E(X) \to B$ is $G$-equivariant.
Since $E$ has trivial $G$-action, so does $B$ and  we see that $G$ acts on 
$E(X)$ fiber-wise
and the map $\ov{p}$ canonically factors through the stack quotient
$\pi: [E(X)/G] \to B$.
Notice that $E$ is a Zariski locally trivial $T$-bundle and so are
$E(X) \to B$ and $[E(X)/G] \to B$. Setting $\mathfrak{X} = [E(X)/G]$,
we conclude that the map $\pi: \mathfrak{X} \to B$ is a Zariski locally
trivial fibration each of whose fiber is the stack $[X/G]$. The morphism
$\pi$ will be called a {\sl stack bundle} over $B$. 

If $X$ is a toric variety with dense torus $T$, then $\pi: \mathfrak{X} \to B$
will be called a {\sl toric stack bundle} over $B$. In this case,
each fiber of $\pi$ is the toric stack $[X/G]$ in the sense of \cite{GSI}.
If $[X/G]$ is a Deligne-Mumford stack, this construction recovers the notion of
toric stack bundles used in \cite{Jiang} and \cite{JTseng2}.

\subsection{Leray-Hirsch Theorem for stack bundles}
\label{subsection:LRT}
First we prove the following lemma.
\begin{lem}\label{lem:linear-P}
Let $X$ be a $T$-equivariantly cellular scheme with the $T$-equivariant
cellular decomposition
\begin{equation}\label{eqn:linear-P*}
\emptyset = X_{n+1} \subsetneq X_n \subsetneq \cdots \subsetneq X_1
\subsetneq X_0 = X
\end{equation}
and let $U_i = X\setminus X_i$ for $0 \le i \le n+1$.
Let $G$ be a diagonalizable group provided with a morphism of algebraic groups
$\phi: G \to T$.
Then for any $0 \le i \le n$, the sequence 
\begin{equation}\label{eqn:linear-P0}
0 \to \bdG ^G_*\left(U_{i+1} \setminus U_i\right) \to \bdG ^G_*(U_{i+1}) \to
\bdG ^G_*(U_i) \to 0
\end{equation}
is exact. In particular, $\bdG ^G_0(X)$ is a free $R(G)$-module of rank
equal to the number of $T$-invariant affine cells in $X$ with basis given by 
the closures of the affine cells.
\end{lem}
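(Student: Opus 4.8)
The plan is to run an ascending induction on $i$ (with $U_0=\emptyset$ and $U_{n+1}=X$) that establishes simultaneously the short exactness of \eqref{eqn:linear-P0} and the stronger structural claim that $\bdG^G_*(U_i)$ is a free $\bdK^G_*(k)$-module admitting a basis concentrated in degree $0$, namely the classes $[\sO_{\ov{V}_j}]$ of the closures (taken inside $U_i$) of the cells $V_j=X_j\setminus X_{j+1}$ for $j<i$. First I would record that, by Definition~\ref{defn:T-CELL}, the successive difference $U_{i+1}\setminus U_i=X_i\setminus X_{i+1}\cong V_i$ is a rational representation of $T$, hence $T$-equivariantly (and so $G$-equivariantly, via $\phi$) an affine space. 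Writing the localization long exact sequence in equivariant G-theory for the $G$-invariant closed immersion $V_i\hookrightarrow U_{i+1}$ with open complement $U_i$,
\[
\cdots\to\bdG^G_t(V_i)\xrightarrow{\iota_*}\bdG^G_t(U_{i+1})\xrightarrow{j^*}\bdG^G_t(U_i)\xrightarrow{\partial}\bdG^G_{t-1}(V_i)\to\cdots,
\]
and using homotopy invariance to identify $\bdG^G_*(V_i)\cong\bdG^G_*(\Spec k)=\bdK^G_*(k)$ (a free $R(G)$-module whose generator sits in degree $0$), the exactness of \eqref{eqn:linear-P0} reduces to showing that $\partial$ vanishes in every degree.

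The vanishing of $\partial$ is the crux, and the step I expect to be the only real obstacle. I would exploit three facts: (i) the localization sequence is one of $\bdK^G_*(k)$-modules and $\partial$ is $\bdK^G_*(k)$-linear up to sign, being the boundary map of a cofiber sequence of module spectra over $\bdK^G(\Spec k)$; (ii) by the inductive hypothesis $\bdG^G_*(U_i)$ is generated over $\bdK^G_*(k)$ by elements lying in degree $0$; and (iii) on such degree-$0$ elements $\partial$ lands in $\bdG^G_{-1}(V_i)\cong\bdK^G_{-1}(k)=0$, since $\bdK^G_*(k)\cong R(G)\otimes_\Z\bdK_*(k)$ is connective. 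Combining (ii) and (iii), $\partial$ kills the degree-$0$ generators, and then (i) forces $\partial\equiv 0$: any $x\in\bdG^G_t(U_i)$ can be written $x=\sum_k c_k\cdot b_k$ with $b_k$ the degree-$0$ generators and $c_k\in\bdK^G_t(k)$, whence $\partial(x)=\sum_k\pm c_k\,\partial(b_k)=0$.

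With $\partial=0$ the long exact sequence breaks into the short exact sequences \eqref{eqn:linear-P0}; since $\bdG^G_*(U_i)$ is free, hence projective, over $\bdK^G_*(k)$, each splits and gives $\bdG^G_*(U_{i+1})\cong\bdG^G_*(V_i)\oplus\bdG^G_*(U_i)$, again free with a degree-$0$ basis. I would take the new basis element to be $\iota_*(1)=[\sO_{V_i}]$, which is the closure class of the cell $V_i$ inside $U_{i+1}$ (the cell being already closed there), and lift the inductive basis of $\bdG^G_*(U_i)$ along the surjection $j^*$ by the closure classes $[\sO_{\ov{V}_j}]$ formed in $U_{i+1}$, using that flat restriction along the open immersion carries $[\sO_{\ov{V}_j}]$ to its counterpart in $U_i$. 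Running the induction up to $U_{n+1}=X$ then shows $\bdG^G_*(X)$ is free over $\bdK^G_*(k)$ on the degree-$0$ classes $\{[\sO_{\ov{V}_j}]\}_{j=0}^{n}$; restricting to degree $0$ identifies $\bdG^G_0(X)$ as a free $R(G)=\bdK^G_0(k)$-module with basis the closures of the affine cells and rank equal to their number.
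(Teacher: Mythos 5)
Your proof is correct, but it takes a genuinely different route from the paper's. You split off the \emph{closed} cell $V_i\hookrightarrow U_{i+1}$ and kill the connecting map $\partial$ by combining (a) the $\bdK ^G_*(k)$-linearity of $\partial$ for a cofiber sequence of module spectra over $\bdK ^G(\Spec(k))$, (b) a strengthened inductive hypothesis that $\bdG ^G_*(U_i)$ is generated in degree $0$, and (c) connectivity, so that $\partial$ of a degree-zero class lands in $\bdG ^G_{-1}(V_i)=0$. The paper instead splits off the \emph{open} dense cell: its key claim is that whenever the open member $U$ of a localization pair is a representation, the restriction $j^*$ admits the section $\alpha^*\circ(\beta^*)^{-1}$ supplied by homotopy invariance, so that localization sequence splits outright; the general case of ~\eqref{eqn:linear-P0} is then deduced from a $3\times 3$ diagram together with an induction on the number of cells, and the freeness of $\bdG ^G_0(X)$ follows by iterating the splitting. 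The paper's argument is lighter on technology (only homotopy invariance; no module-spectrum structure on the localization sequence and no Leibniz rule for $\partial$), while yours buys more: your inductive hypothesis already shows that $\bdG ^G_*(X)$ is free over $\bdK ^G_*(k)$ on degree-zero generators given by the cell closures, which the paper establishes separately in Proposition~\ref{prop:linear}. The extra inputs you invoke are all available within the paper --- the Leibniz rule for the connecting map of a localization sequence is used in the proof of Theorem~\ref{thm:LHT} with references, and the connectivity of equivariant G-theory spectra is noted in the proof of Theorem~\ref{thm:main-thm-1} --- so there is no gap; only your parenthetical describing $\bdG ^G_*(V_i)$ as ``a free $R(G)$-module'' should read ``a free $\bdK ^G_*(k)$-module of rank one generated in degree $0$'', which is what your argument actually uses.
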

\begin{proof}
To prove the exactness part of the proposition, 
we first make the following claim. 
Suppose $X$ is
a $G$-scheme and $j:U \inj X$ is a $G$-invariant open inclusion with
complement $Y$. Suppose that $U$ is isomorphic to a representation of $G$.
Then the localization sequence 
\begin{equation}\label{eqn:linear-P1}
0 \to \bdG ^G_*(Y) \to \bdG ^G_*(X) \xrightarrow{j^*} \bdG ^G_*(U) \to 0
\end{equation}
is (split) short exact.

To prove the claim, let $\alpha: X \to \Spec(k)$ and $\beta: U \to \Spec(k)$
be the structure maps (which are $G$-equivariant) so that
$\beta = \alpha \circ j$. The homotopy invariance of equivariant K-theory
shows that $\beta^*$ is an isomorphism. Let $\gamma = \alpha^* \circ 
(\beta^*)^{-1}$. Then one checks that $\gamma$ is a section of $j^*$ and
hence the localization sequence splits into short exact sequences.
This proves the claim.

We shall prove ~\eqref{eqn:linear-P0} by induction on the number of 
$T$-invariant affine cells in $X$. 
For $i = 0$, ~\eqref{eqn:linear-P0} is immediate.
So we assume $i \ge 1$ and consider the commutative diagram:
\begin{equation}\label{eqn:linear-P2}
\xymatrix@C1pc{
& 0 \ar[d] & 0 \ar[d] & 0 \ar[d] & \\
0 \ar[r] & \bdG ^G_*(X_i \setminus X_{i+1}) \ar[r] \ar@{=}[d] &
\bdG ^G_*(X_1 \setminus X_{i+1}) \ar[r] \ar[d] & 
\bdG ^G_*(X_1 \setminus X_{i}) \ar[r] \ar[d] & 0 \\
0 \ar[r] & \bdG ^G_*(X_i \setminus X_{i+1}) \ar[r]  &
\bdG ^G_*(X \setminus X_{i+1}) \ar[r] \ar[d] & 
\bdG ^G_*(X \setminus X_{i}) \ar[r] \ar[d] & 0 \\ 
& &
\bdG ^G_*(X \setminus X_1) \ar@{=}[r] \ar[d] & 
\bdG ^G_*(X \setminus X_1) \ar[r] \ar[d] & 0 \\
& & 0  & 0.}
\end{equation}

The top row is exact by induction on the number of affine cells since
$X_1$ is $T$-equivariantly cellular with fewer number of cells.
The two columns are exact by the above claim. It follows that the middle
row is exact, which proves ~\eqref{eqn:linear-P0}.

To prove the last (freeness) assertion, we apply ~\eqref{eqn:linear-P1}
to the inclusion $X_1 \subset X$ and see that $\bdG ^G_0(X) \cong
\bdG ^G_0(X_1) \oplus R(G)$. An induction on the number of affine $G$-cells
now finishes the proof.
\end{proof}

\begin{prop}\label{prop:linear}
Let $X$ be a $T$-equivariantly cellular scheme and let $B$ be any
scheme with trivial $T$-action. Then the external product map
\begin{equation}\label{eqn:linear1}
\bdG _*(B) {\otimes}_{\Z} \bdG ^G_0(X)     
\to \bdG ^G_*(B \times X)
\end{equation}
is an isomorphism. In particular, the natural map
$\bdK _*(k) {\otimes}_{\Z} \bdG ^G_0(X) \to \bdG ^G_*(X)$ is an
isomorphism.
\end{prop}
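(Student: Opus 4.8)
The plan is to induct on the number $n+1$ of $T$-invariant affine cells of $X$, using the cellular filtration of Definition~\ref{defn:T-CELL} together with the localization sequence in equivariant $G$-theory and the five lemma. Since $B$ carries the trivial $T$-action it carries the trivial $G$-action through $\phi$, and I regard $B \times X$ as a $G$-scheme via the action on the second factor, so that the external product of a coherent sheaf on $B$ with a $G$-equivariant sheaf on $X$ is a $G$-equivariant sheaf on $B \times X$.

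For the base case $X = V$ is a single cell, i.e.\ a rational representation of $G$. The projection $B \times V \to B$ is a $G$-equivariant vector bundle, so homotopy invariance gives $\bdG ^G_*(B) \xrightarrow{\cong} \bdG ^G_*(B \times V)$; and since $G$ is diagonalizable and acts trivially on $B$, \cite[Lemma~5.6]{Thomason1} identifies $\bdG ^G_*(B)$ with $\bdG _*(B) {\underset {\Z} \otimes} R(G)$ via external product. As $\bdG ^G_0(V) \cong R(G)$ by homotopy invariance, these identifications exhibit the map ~\eqref{eqn:linear1} as a composite of isomorphisms.

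For the inductive step I would write $U_1 = X \setminus X_1 \cong V_0$ for the open top cell and $X_1$ for its closed complement, the latter being $T$-equivariantly cellular with one fewer cell. Both the localization sequence of the pair $(X, X_1)$ and that of $(B \times X, B \times X_1)$ split into short exact sequences: for the former this is exactly the claim established inside the proof of Lemma~\ref{lem:linear-P}, and for the latter the same argument applies, the splitting of $j^*\colon \bdG ^G_*(B \times X) \to \bdG ^G_*(B \times V_0)$ being given by $p^* \circ (q^*)^{-1}$, where $p\colon B \times X \to B$ and $q\colon B \times V_0 \to B$ are the flat $G$-equivariant projections and $q^*$ is an isomorphism by homotopy invariance. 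I expect this splitting for the $B$-crossed family to carry the real content of the argument, since it is what replaces the genuine representation-theoretic input available in Lemma~\ref{lem:linear-P}.

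These two split short exact sequences fit into a ladder whose vertical maps are the external products ~\eqref{eqn:linear1} for $X_1$, $X$ and $V_0$; the squares commute by compatibility of the external product with flat pullback (for $j^*$) and with proper pushforward along the closed immersion (for $i_*$, via base change/the projection formula). The lower row is exact by the previous paragraph. For the upper row, Lemma~\ref{lem:linear-P} shows $\bdG ^G_0(X_1)$, $\bdG ^G_0(X)$ and $\bdG ^G_0(V_0)$ are free $R(G)$-modules, hence free over $\Z$ since $R(G)$ is $\Z$-free, so the degree-zero sequence splits over $\Z$ and remains exact after ${\underset {\Z} \otimes}\, \bdG _*(B)$. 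The left vertical map is an isomorphism by the inductive hypothesis and the right one by the base case, so the five lemma forces the middle map to be an isomorphism, completing the induction. Taking $B = \Spec(k)$ and using $\bdG _*(\Spec k) = \bdK _*(k)$ yields the final assertion.
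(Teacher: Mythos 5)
Your proof is correct and follows the same basic strategy as the paper's: induction over the cellular filtration, the localization sequence, the splitting supplied by homotopy invariance on a cell, the freeness statements from Lemma~\ref{lem:linear-P}, and the five lemma. The one genuine difference is the direction of the induction. You peel off the \emph{open} top cell $V_0=X\setminus X_1$ and induct on its closed complement $X_1$; since the open stratum $B\times V_0\to B$ is a $G$-equivariant vector bundle, the localization sequence for the pair $(B\times X, B\times X_1)$ splits outright via $p^*\circ (q^*)^{-1}$, so both rows of your ladder are short exact before the inductive hypothesis is ever invoked. The paper instead inducts over the increasing open subschemes $U_i=X\setminus X_i$, so at each stage the \emph{closed} stratum is the single cell $U_{i+1}\setminus U_i$ and the open piece $U_i$ is the one handled by induction; there the bottom row is a priori only a long exact sequence, and the surjectivity of $j^*$ (hence injectivity of $i_*$) has to be extracted from the commutative ladder using the exactness of the top row and the inductive isomorphism on $U_i$. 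Your variant localizes the real content exactly where you say it is --- in the splitting of the $B$-crossed sequence --- and is, if anything, slightly cleaner at that step; otherwise the two arguments are interchangeable.
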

\begin{proof} 
Since the map
\begin{equation}\label{eqn:linear1*}
\bdG_ *(B) \otimes_{\Z} R(G) \xrightarrow{\cong} \bdG ^G_*(B)
\end{equation}
is an isomorphism ({\sl cf.} \cite[Lemma~5.6]{Thomason1}), 
the lemma is equivalent to the assertion that the map
\begin{equation}\label{eqn:linear1*0}
\bdG ^G_*(B) {\otimes}_{R(G)} \bdG ^G_0(X) \to \bdG ^G_*(B \times X)
\end{equation}
is an isomorphism.

Consider the cellular decomposition of $X$ as in 
Lemma~\ref{lem:linear-P}.
Then each $U_i = X\setminus X_i$ is also a $T$-equivariantly cellular scheme. 
It suffices to show by induction on $i \ge 0$ 
that ~\eqref{eqn:linear1*0} holds when $X$ is any of these $U_i$'s. 
There is nothing to prove for $i =0$ and the case $i = 1$
follows by the homotopy invariance since $U_1$ is an affine space.

To prove the  general case, we use the short exact sequence
\begin{equation}\label{eqn:linear2}
0 \to \bdG ^G_0\left(U_{i+1} \setminus U_i\right) \to \bdG ^G_0(U_{i+1}) \to
\bdG ^G_0(U_i) \to 0
\end{equation}
given by Lemma~\ref{lem:linear-P}. This sequence
splits, since each 
$\bdG ^G_0(U_i)$ was shown to be free over $R(G)$ in Lemma~\ref{lem:linear-P}. 
Tensoring this with $\bdG ^G_*(B)$ over $R(G)$, we obtain a commutative diagram 
\[
\xymatrix@C.3pc{
0 \ar[r] & \bdG ^G_*(B) {\otimes} \bdG ^G_0\left(U_{i+1} \setminus U_i\right)  
\ar[r] \ar[d] & \bdG ^G_*(B) {\otimes} \bdG ^G_0(U_{i+1}) 
\ar[r] \ar[d] & \bdG ^G_*(B) {\otimes} \bdG ^G_0(U_i) 
\ar[r] \ar[d] & 0 \\
\ar[r] &  \bdG ^G_*(B \times (U_{i+1} \setminus U_i)) \ar[r]_{\ \ \ i_*} &
\bdG ^G_*(B \times U_{i+1}) \ar[r]_{j^*} &  
\bdG ^G_*(B \times U_i) \ar[r] &}
\] 
where the top row remains exact since the short exact sequence in 
~\eqref{eqn:linear2}
is split. The bottom row is the localization exact sequence.
The left vertical arrow is an isomorphism by the homotopy invariance
and the right vertical arrow is an isomorphism by the induction.
In particular, $j^*$ is surjective in all indices. We conclude that
$i_*$ is injective in all indices and the middle vertical arrow is  
an isomorphism. 
\end{proof}

\begin{thm}$($Stacky Leray-Hirsch theorem$)$\label{thm:LHT}
Suppose that $k$ is a perfect field and $B$ is a smooth scheme over $k$.
Let $X$ be a $T$-equivariantly cellular scheme. 
Let $\mathfrak{F} \xrightarrow{i} \mathfrak{X} \xrightarrow{\pi} B$ 
be a Zariski locally trivial stack bundle (\S~\ref{subsection:TS-bun-def})
each of whose fiber $\mathfrak{F}$ is a smooth stack of the form $[X/G]$. 
Assume that 
there are elements $\{e_1, \cdots , e_r\}$ in $\bdK _0(\mathfrak{X})$ such that 
$\{f_1 = i^*(e_1), \cdots , f_r = i^*(e_r)\}$ is an $R(G)$-basis of
$\bdK _0(\mathfrak{X}_b)$ for each fiber $\mathfrak{X}_b = \mathfrak{F}$ of the 
fibration. 
Then the map
\begin{equation}\label{eqn:LHT**}
\Phi : \bdK _0(\mathfrak{F}) {\underset{R(G)}\otimes} \bdK ^G_*(B) \to 
\bdK _*(\mathfrak{X})
\end{equation}
\[
\Phi\left({\underset{1 \le i \le r}\sum} \ f_i \otimes b_i\right)
=  {\underset{1 \le i \le r}\sum} {\pi}^*(b_i) e_i
\]
is an isomorphism of $R(G)$-modules. 
In particular, $\bdK _*(\mathfrak{X})$ is a free $\bdK ^G_*(B)$-module and
the map $\pi^*: \bdK ^G_*(B) \to \bdK _*(\mathfrak{X})$ is injective.
\end{thm}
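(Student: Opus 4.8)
The plan is to prove that $\Phi$ is an isomorphism by Zariski descent on the base $B$, reducing to the case where the bundle is trivial and then invoking Proposition~\ref{prop:linear}. Throughout I use that $\mathfrak{X}$ is a smooth stack (because $B$ and $X$ are smooth and $G$ is diagonalizable), so that $\bdK _*(\mathfrak{X}) = \bdG _*(\mathfrak{X})$ and the localization and homotopy-invariance machinery for equivariant G-theory applies to the target. I also record, from Lemma~\ref{lem:linear-P}, that $\bdK _0(\mathfrak{F}) = \bdG ^G_0(X)$ is a free $R(G)$-module on the cell-closure basis $\{f_1,\dots,f_r\}$, so the source of $\Phi$ is the free $\bdK ^G_*(B)$-module $\bigoplus_i \bdK ^G_*(B)\cdot(f_i\otimes 1)$. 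This already yields the freeness assertion once $\Phi$ is shown to be an isomorphism.

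First I would treat the local, trivial case. Choose a finite cover of $B$ by connected open subschemes $U$ over which the principal bundle $E$ trivializes, so that $\mathfrak{X}|_U \cong [(U\times X)/G]$ with $G$ acting through $\phi$ on $X$ and trivially on $U$, and with $\pi|_U$ the projection. Proposition~\ref{prop:linear} then supplies the external-product isomorphism $\Psi_U\colon \bdK ^G_*(U)\otimes_{R(G)}\bdG ^G_0(X)\xrightarrow{\cong}\bdK _*(\mathfrak{X}|_U)$, $b\otimes f_j\mapsto \pi|_U^*(b)\,p_X^*(f_j)$. Writing each restricted class in this basis, $e_i|_U=\sum_j c_{ij}\,p_X^*(f_j)$ with $c_{ij}\in\bdK ^G_0(U)$, one checks from the defining formula that $\Phi|_U=\Psi_U\circ M$, where $M$ is the $\bdK ^G_*(U)$-linear endomorphism with matrix $(c_{ij})\in M_r(\bdK ^G_0(U))$. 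Thus $\Phi|_U$ is an isomorphism as soon as the matrix $(c_{ij})$ is invertible.

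The invertibility of $(c_{ij})$ is the technical heart of the argument. The hypothesis $i^*(e_i)=f_i$ means that restriction to any fibre $b\in U$ sends $c_{ij}$ to $\delta_{ij}$ under the augmentation $\bdK ^G_0(U)\to R(G)$. For $U$ connected and smooth we have $\bdK ^G_0(U)=R(G)\otimes_{\Z}\bdK _0(U)$, and the augmentation ideal $R(G)\otimes_{\Z}\wt{\bdK }_0(U)$ is nilpotent, since the $\gamma$-filtration on $\bdK _0(U)$ of a smooth variety of finite dimension terminates. Hence $(c_{ij})=I+N$ with $N$ a matrix of nilpotent entries, so $\det(c_{ij})$ is a unit and $(c_{ij})$ is invertible; this settles the trivial case.

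Finally I would globalize by induction on the number of opens in the trivializing cover. For $B=U\cup V$ the localization sequence yields a Mayer--Vietoris long exact sequence for $\bdK _*(\mathfrak{X})$ associated with the Zariski cover $\{\pi^{-1}(U),\pi^{-1}(V)\}$ of $\mathfrak{X}$. Because $\bdK _0(\mathfrak{F})$ is free of finite rank over $R(G)$, tensoring the Mayer--Vietoris sequence of the base (valid since $\bdK ^G_*(-)=\bdK _*(-)\otimes_{\Z} R(G)$ satisfies Zariski descent) preserves exactness, giving a Mayer--Vietoris sequence for the source of $\Phi$; naturality of $\pi^*$ and of multiplication by the global classes $e_i$ makes $\Phi$ a morphism between these two sequences. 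By the inductive hypothesis $\Phi$ is an isomorphism over $U$, $V$, and $U\cap V$, so the five lemma gives the isomorphism over $B$. The two ``in particular'' statements follow at once: freeness over $\bdK ^G_*(B)$ was noted above, and if $\pi^*(b)=0$ then $\Phi(f_{i_0}\otimes b)=\pi^*(b)\,e_{i_0}=0$ for the top cell-closure basis element $f_{i_0}=[\sO_X]=1$, whence $f_{i_0}\otimes b=0$ by injectivity of $\Phi$ and therefore $b=0$, proving $\pi^*$ injective. The main obstacle is precisely the matrix computation of the third paragraph: without the nilpotence of the augmentation ideal one obtains only that $(c_{ij})$ is invertible fibrewise, which does not suffice over a positive-dimensional base.
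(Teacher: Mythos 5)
Your argument is correct, but it takes a genuinely different route from the paper's. The paper uses the perfectness of $k$ to stratify $B$ by smooth locally closed pieces over which the bundle is trivial, and then inducts on the open complements $U_i$ of the closed strata via the localization sequence and the five lemma; you instead take a finite trivializing open cover and induct on the number of opens via Mayer--Vietoris, which incidentally makes no use of the perfectness of $k$. Both reduce to the trivial-bundle case through Proposition~\ref{prop:linear}, but your treatment of that case is more careful than the paper's: since the hypothesis only controls the $e_j$ fibrewise, the restriction $e_j|_U$ under a trivialization need not equal $p_X^*(f_j)$, and your change-of-basis matrix $(c_{ij})=I+N$, with $N$ having entries in the nilpotent augmentation ideal of $\bdK_0(U)$ for $U$ smooth, connected and finite-dimensional, is exactly what is needed to identify $\Phi|_U$ with the external product isomorphism up to an invertible matrix --- a point the paper passes over in silence (in its applications the $e_j$ are classes of associated bundles, which do restrict to $p_X^*(f_j)$ over trivializing opens, so the issue is invisible there). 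Conversely, you are glib precisely where the paper is careful: the compatibility of $\Phi$ with the boundary maps of the two long exact sequences is not bare ``naturality''. The paper proves it by combining the Leibniz rule for the connecting homomorphism of the fibration sequence ~\eqref{eqn:Ring-sp} with the vanishing of $\delta$ on restrictions of global classes; for your Mayer--Vietoris boundary one needs the same Leibniz-rule argument together with the projection formula for the pushforward from the closed complement, and this step should be written out rather than asserted. Your derivation of the injectivity of $\pi^*$ from the unimodularity of $[\sO_X]$ in the cell-closure basis is a clean alternative to the paper's restriction to $\mathfrak{X}_1$.
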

\begin{proof}
Since $k$ is perfect and since the fibration $p$ is Zariski locally trivial,
we can find a filtration 
\begin{equation}\label{eqn:LHT-fil}
\emptyset = B_{n+1} \subsetneq B_n \subsetneq \cdots \subsetneq B_1
\subsetneq B_0 = B
\end{equation}
of $B$ by closed subschemes such that for each $0 \le i \le n$,
the scheme $B_i \setminus B_{i+1}$ is smooth and the given fibration is trivial 
over it. We set $U_i = B \setminus B_i$ 
and $V_i = U_i \setminus U_{i-1} = B_{i-1} \setminus B_i$. 
Observe then that each of $U_i$'s and $V_i$'s is smooth. 

Set $\mathfrak{X}_i = {\pi}^{-1}(U_i)$ and $\mathfrak{W}_i = {\pi}^{-1}(V_i) = 
V_i \times \mathfrak{F}$. 
Let $\eta_i : \mathfrak{X}_i \inj \mathfrak{X}$ and 
$\iota_i: \mathfrak{W}_i \inj\mathfrak{X}$ be the inclusion maps. 
We prove by induction on $i$ that the map
$\bdK _0(\mathfrak{F}) {\underset{R(G)}\otimes} \bdK ^G_*(U_i) \to 
\bdK _*(\mathfrak{X}_i)$ 
is an isomorphism, which will
prove the theorem. 

Since $U_0 = \emptyset$ and 
$\mathfrak{X}_1 = U_1 \times \mathfrak{F}$, the desired isomorphism for 
$i \le 1$ follows from Proposition~\ref{prop:linear} and the isomorphism 
$U_1 \times \mathfrak{F} \cong [(U_1 \times X)/G]$. 
We now consider the commutative diagram: 
\begin{equation}\label{eqn:LHT&}
\xymatrix@C.5pc{
{\begin{array}{c}
\bdK ^G_*(U_{i}) \\
{\otimes} \\
\bdK _0(\mathfrak{F})
\end{array}} 
\ar[r] \ar[d] & 
{\begin{array}{c}
\bdK ^G_*(V_{i+1}) \\
{\otimes} \\
\bdK _0(\mathfrak{F})
\end{array}}
\ar[d] \ar[r] &
{\begin{array}{c}
\bdK ^G_*(U_{i+1}) \\
{\otimes} \\
\bdK _0(\mathfrak{F})
\end{array}} 
\ar[d] \ar[r] &
{\begin{array}{c}
\bdK ^G_*(U_{i}) \\
{\otimes} \\
\bdK _0(\mathfrak{F})
\end{array}} 
\ar[r] \ar[d] & 
{\begin{array}{c}
\bdK ^G_*(V_{i+1}) \\
{\otimes} \\
\bdK _0(\mathfrak{F})
\end{array}}
\ar[d] \\
\bdK _*(\mathfrak{X}_i) \ar[r] & \bdK _*(\mathfrak{W}_{i+1}) \ar[r] & 
\bdK _*(\mathfrak{X}_{i+1}) \ar[r] & 
\bdK _*(\mathfrak{X}_i) \ar[r] & \bdK _*(\mathfrak{W}_{i+1}).}
\end{equation}

The top row in this diagram is obtained by tensoring the K-theory long exact 
localization sequence with $\bdK _0(\mathfrak{F})$ over $R(G)$, and the 
bottom row
is just the localization exact sequence. Since $\bdK _0(\mathfrak{F})$ is a free
$R(G)$-module ({\sl cf.} Lemma~\ref{lem:linear-P}), the top row is also exact. 

It is easily checked that the second and the third squares commute
using the commutativity property of the push-forward and pull-back maps
of K-theory of coherent sheaves in a Cartesian diagram of proper and
flat maps. 
We show that the other squares also commute. It is enough to show that
the first square commutes as the fourth one is same as the first.
Let $\delta$ denote the connecting homomorphism in a long exact localization
sequence for higher K-theory. 

If we start with an element $b \otimes i^*(e_j) \in \bdK _*(U_{i}) {\otimes} 
\bdK _0(\mathfrak{F})$ and map this horizontally, we obtain 
$\delta b \otimes i^*(e_j)$
which maps vertically down to ${\pi}^*(\delta b) \cdot \iota^*_{i+1}(e_j)$. 
On the other hand,
if we first map vertically, we obtain ${\pi}^*(b) \cdot \eta^*_i(e_j)$ which 
maps horizontally to $\delta \left({\pi}^*(b) \cdot \eta^*_i(e_j) \right)$. 

Now, we recall that these elements in the higher K-theory of coherent
sheaves are represented by the elements in the higher homotopy groups of the
various infinite loop spaces. Moreover, if we have a closed immersion
of smooth stacks $\mathfrak{F} \inj \mathfrak{X}$ with open complement
$\mathfrak{U}$, then we have a fibration sequence of ring spectra
\begin{equation}\label{eqn:Ring-sp}
\bdK (\mathfrak{F}) \to  \bdK (\mathfrak{X}) \to \bdK (\mathfrak{U}).
\end{equation}

The homotopy groups of these ring spectra form graded rings and the
connecting homomorphism in the long exact sequence of the homotopy groups 
associated to the above
fibration sequence satisfies the Leibniz rule 
(e.g., see \cite[Appendix~A]{Brown} and \cite[\S~2.4]{Panin}).

Applying this Leibniz rule, we see that the term
$\delta \left({\pi}^*(b) \cdot \eta^*_i(e_j) \right)$ is same as 
$\delta {\pi}^*(b) \cdot \iota^*_{i+1}(e_j) = {\pi}^*\left(\delta b\right) \cdot 
\iota^*_{i+1}(e_j)$ since 
$\delta (\eta^*_i(e_j)) = 0$. We have shown that the above diagram commutes.

The first and the fourth vertical arrows in ~\eqref{eqn:LHT&} are isomorphisms 
by induction. The second and the fifth vertical arrows are isomorphisms by 
Proposition~\ref{prop:linear}. 
Hence the middle vertical arrow is also an isomorphism by 5-lemma.

To show that $\pi^*$ is injective, consider the $T$-invariant filtration of $X$
as in ~\eqref{eqn:linear-P*} and let $j: [E(U_1)/G] = \mathfrak{X}_1
\to \mathfrak{X}$ be the open inclusion. 
If we apply ~\eqref{eqn:LHT**} to the map $ \mathfrak{X}_1 \to B$, we see
that the composite map $\bdK ^G_*(B) \to \bdK _*(\mathfrak{X}) \to
\bdK _*(\mathfrak{X}_1)$ is an isomorphism (since $U_1$ is a $T$-invariant 
cell of $X$). We conclude that $\pi^*$ is split injective.
\end{proof}

\section{Higher K-theory of toric stack bundles}
\label{section:K-Chow-TSB}
In this section, we give explicit descriptions of the higher K-theory of 
toric stack bundles in terms of the higher K-theory of the base scheme. 

Let $T$ be a split torus of rank $n$. Let $N = \Hom(\G_m, T)$ be the lattice of
one-parameter subgroups of $T$ and let $M = \Hom(T, \G_m) = N^{\vee}$ be
its character group. Let $X = X(\Delta)$ be a smooth
projective toric variety associated to a fan $\Delta$ in $N_{\R}$. Let
 
\begin{equation}\label{eqn:reduced}
0 \to G \to T \to T' \to 0
\end{equation}
be an exact sequence of diagonalizable groups.
This yields the exact sequence of the character groups
\begin{equation}\label{eqn:reduced}
0 \to T'^{\vee} \to T^{\vee} \to G ^{\vee} \to 0.
\end{equation}

\subsection{The Stanley-Reisner algebra associated to a 
subgroup of $T$}\label{subsection:SRA}
We fix an ordering $\{\sigma_1, \cdots , \sigma_m\}$ of $\Delta_{\rm max}$  
and let $\tau_i \subset \sigma_i$ be the cone which is the
intersection of $\sigma_i$ with all those $\sigma_j$ such that $j \ge i$
and which intersect $\sigma_i$ in dimension $n-1$. 
Let $\tau'_i \subset \sigma_i$ be the cone such that $\tau_i \cap \tau'_i 
= \{0\}$ and ${\rm dim}(\tau_i) + {\rm dim}(\tau'_i) = n$ for $1 \le i \le m$.
It is easy to see that $\tau'_i$ is the intersection of $\sigma_i$ with all 
those $\sigma_j$ such that $j \le i$ and which intersect $\sigma_i$ in 
dimension $n-1$. 
Since $X$ is smooth and projective, it is well known that we can choose the
above ordering of $\Delta_{\rm max}$ such that
\begin{equation}\label{eqn:order}
\tau_i \subset \sigma_j \Rightarrow \ i \le j \ \ {\rm and}
\ \ \tau'_i \subset \sigma_j  \Rightarrow  \ j \le i.
\end{equation}

Let $\Delta_1 = \{\rho_1, \cdots , \rho_d\}$ be the set of one-dimensional 
cones in $\Delta$ and let $\{v_1, \cdots , v_d\}$ be the associated primitive
elements of $N$. We choose $\{\rho_1, \cdots , \rho_n\}$ to be a set of one 
dimensional faces of $\sigma_m$ such that $\{v_1, \cdots , v_n\}$ is a 
basis of $N$. Let $\{\chi_1, \cdots , \chi_n\}$ be the dual basis of $M$.
Let $\{\chi'_1, \cdots , \chi'_r\}$ be a chosen basis of $T'^{\vee} = M'$.
We will denote the group operations in all the lattices additively.

\begin{defn}\label{defn:RING}
Let $A$ be a commutative ring with unit and let $\{r_1, \cdots ,r_n\}$ be 
a set of invertible elements in $A$. 
Let $I^T_{\Delta}$ denote the ideal of the Laurent polynomial algebra 
$A[t^{\pm 1}_1, \cdots , t^{\pm 1}_d]$ generated by the elements
\begin{equation}\label{eqn:Reln1}
 (t_{j_1}-1) \cdots (t_{j_l}-1), \ 1 \le j_p \le d
\end{equation}
such that $\rho_{j_1}, \cdots , \rho_{j_l}$ do not span a cone of $\Delta$.
Let $J^G_\Delta$ denote the ideal of $A[t^{\pm 1}_1, \cdots , t^{\pm 1}_d]$ 
generated by the relations
\begin{equation}\label{eqn:Reln2}
s_i := \left(\stackrel{d}{\underset{j = 1}\prod}
(t_j)^{<-\chi'_i, v_j>}\right) - r_i, \ 1 \le i \le r.
\end{equation}
We define the $A$-algebras $R_T(A, \Delta)$ and $R_G(A, \Delta)$ to be 
quotients of ${A[t^{\pm 1}_1, \cdots , t^{\pm 1}_d]}$ by the ideals $I^T_\Delta$ and 
$I^G_\Delta = I^T_{\Delta} + J^G_{\Delta}$, respectively. 
\end{defn}

The ring $R_G(A, \Delta)$ will be called
the {\sl Stanley-Reisner} algebra over $A$ associated to the subgroup $G$.
Every character $\chi \in M$ acts on $R_T(A, \Delta)$ via
multiplication by the element $t_\chi =  
\left(\stackrel{d}{\underset{j = 1}\prod} (t_j)^{<-\chi, v_j>}\right)$
and this makes $R_T(A, \Delta)$ (and hence $R_G(A, \Delta)$)
an $\left(A {\underset{\Z}\otimes} R(T)\right)$-algebra. 

\vskip .3cm 

\subsection{The K-theory of toric stack bundles}
\label{subsection:Main-formula}
Let $T$ be a split torus over a perfect field $k$ and let $G$ 
be a closed subgroup of $T$ (which may not necessarily be a torus).
Let $X$ be a smooth projective toric variety with dense torus $T$
and let $\pi:\mathfrak{X} = [(E(X)/G] \to B$ be a toric stack bundle
over a smooth $k$-scheme $B$ associated to a principal $T$-bundle
$p: E \to B$. We wish to describe the K-theory of $\mathfrak{X}$
in terms of the K-theory of $B$.

\vskip .3cm 
 
Any $T$-equivariant
line bundle $L \to X$ uniquely defines a $G$-equivariant line bundle 
$E(L) = E \stackrel{T}{\times} L$ on $E(X)$, where the $G$-action on
$E(L)$ is given exactly as on $E(X)$.  
Every $\rho \in \Delta_1$ 
defines a unique $T$-equivariant line bundle
$L_{\rho}$ on $X$ with a $T$-equivariant section $s_{\rho} : X \to L_{\rho}$
which is transverse to the zero-section and whose zero locus is the orbit
closure $V_\rho = \ov{O_\rho}$. 

For any $\sigma \in \Delta$, let $u_{\sigma}$ 
denote the fundamental class $[\sO_{V_\sigma}]$  of the $T$-invariant subscheme
$V_{\sigma}$ in $\bdK ^T_0(X)$ and let $y_{\sigma}$ denote the fundamental class
of $[E\left(V_\sigma\right)]$ in $\bdK ^G_0(E(X)) = \bdK _0(\mathfrak{X})$.

Notice that ${\ov{p}}_{\sigma}: E\left(V_\sigma\right) \to B$ is a 
$G$-equivariant smooth 
projective toric sub-bundle of $\ov{p}:E(X) \to B$ with fiber 
$V_\sigma$. In particular, $\pi_{\sigma} :[E\left(V_\sigma\right)/G] \to B$
is a toric stack sub-bundle of $\pi: \mathfrak{X} \to B$ with fiber 
$[V_\sigma/G]$. We set $\mathfrak{X}_{\sigma} = [E\left(V_\sigma\right)/G]$.

Suppose that $\rho_{j_1}, \cdots , \rho_{j_l}$ do not span a cone in $\Delta$.
Then $s = (s_{j_1}, \cdots , s_{j_l})$ yields a $G$-equivariant nowhere 
vanishing section of
$E(L_{\rho_{j_1}}) \oplus \cdots \oplus E(L_{\rho_{j_l}})$ and hence the Whitney sum
formula for Chern classes in K-theory implies that

\begin{equation}\label{eqn:vanish1}
y_{\rho_{j_1}} \cdots y_{\rho_{j_l}} = 0 \ {\rm in} \ 
\bdK ^G_0\left(E(X)\right). 
\end{equation}

We now consider the commutative diagram
\begin{equation}\label{eqn:vanish2}
\xymatrix@C.7pc{
X_l \ar[r]^{\iota} \ar[d]_{\pi_l} & E(X) \ar[d]_{\ov{p}} & 
E \times X \ar[d]^{p_E} \ar[r]^>>>>{p_X} \ar[l]_{p'} & 
X \ar[d]^{\pi_X} \\
{\rm Spec}(l) \ar[r] & B & E \ar[l]^{p} \ar[r]_<<<<{\pi_E} & {\rm Spec}(k),}
\end{equation}
where ${\rm Spec}(l)$ is any point of $B$. It is clear that all squares are 
Cartesian and all the maps in the right square are
$T$-equivariant.

We define $(T \times G)$-actions on any $T$-invariant subscheme $Y \subseteq X$
and on $E$ by $(t,g)\cdot y = tg \cdot y$ and
$(t,g)\cdot e = t\cdot e$, respectively. An action of $(T \times G)$ on
$E \times X$ is defined by $(t,g) \cdot (e,x) = (t\cdot e, tg \cdot x)$.
It is clear that these are group actions such that the square on the right in
~\eqref{eqn:vanish2} is $(T\times G)$-equivariant. This implies that
the middle square is also $(T\times G)$-equivariant and the map
$\ov{p}$ is $G$-equivariant with respect to the trivial action of $G$ on $B$.
The square on the left is $G$-equivariant.

Let $L_{\chi}$ denote the $T$-equivariant line bundle on
${\rm Spec}(k)$ associated to a character $\chi$ of $T$. 
Let $(T \times G)$ act on $L_{\chi}$ by $(t,g)\cdot v = \chi(t) \chi(g) \cdot v$.
If $\chi \in M' = T'^{\vee}$, then $G$ acts trivially on $L_{\chi}$ and hence
it acts trivially on $\pi^*_E(L_{\chi})$. Recall that $(T \times G)$ acts on
$E$ via $T$. Hence $\pi^*_E(L_{\chi}) \to E$ is a $(T \times G)$-equivariant
line bundle on which $G$-acts trivially. 
Since the $T$-equivariant line
bundles on $E$ are same as ordinary line bundles on $B$, 
we find that for every $\chi \in M'$, there is a unique ordinary line
bundle $\zeta_{\chi}$ on $B$ such that $\pi^*_E(L_{\chi}) = p^*(\zeta_{\chi})$.

Since $G$ acts trivially on $B$, there is a canonical ring homomorphism
$c_B: \bdK _*(B) \to \bdK ^G_*(B)$ such that the composite $\bdK _*(B) \xrightarrow{c_B}
\bdK ^G_*(B) \to \bdK _*(B)$ is identity. These maps are simply the
maps $\bdK _*(B) \xrightarrow{c_B} \bdK ^G_*(B) = \bdK _*(B)\otimes_{\Z} R(G) \to \bdK _*(B)$.
Since $p^*_X \circ \pi^*_X (L_{\chi}) = p^*_E \circ \pi^*_E (L_{\chi})$
and since the $(T \times G)$-equivariant vector bundles on $E \times X$
are same as $G$-equivariant vector bundles on $E(X)$,
we conclude that for every $\chi \in M'$, there is a unique ordinary line
bundle $\zeta_{\chi}$ on $B$ such that 
\begin{equation}\label{eqn:vanish3}
E(\pi^*_X(L_{\chi})) = {\ov{p}}^*(\zeta_{\chi}) = 
{\ov{p}}^*\left(c_B(\zeta_{\chi})\right). 
\end{equation}

Notice also that on each open subset of $B$ where the bundle $p$ is trivial,
the restriction of $\zeta_{\chi}$ is the trivial line bundle since
$\zeta_{\chi}$ is obtained from the $T$-line bundle $L_{\chi}$ on $\Spec(k)$.

We define a homomorphism of $\bdK _*(B)$-algebras 
$\bdK _*(B)[t^{\pm 1}_1, \cdots , t^{\pm 1}_d] \to
\bdK _*(\mathfrak{X})$ by the assignment $t_i \mapsto [{E(L^{\vee}_{\rho_i})}/G]$
for $1 \le i \le d$. If we let $r_i = \zeta_{\chi'_i}$ for
$1 \le i \le r$ ({\sl cf.} \S~\ref{subsection:SRA}), 
then it follows from ~\eqref{eqn:vanish1} and 
~\eqref{eqn:vanish3} that this homomorphism descends to a $\bdK _*(B)$-algebra
homomorphism
\begin{equation}\label{eqn:vanish4}
\Phi_G : R_G\left(\bdK _*(B), \Delta \right) \to \bdK _*(\mathfrak{X}).
\end{equation}

\vskip .3cm 

Given a sequence $\gamma = \{i_1, \cdots, i_d\}$ of integers,
set $E(\gamma) = E\left((L^{\vee}_{\rho_1})^{i_1} \otimes \cdots \otimes
(L^{\vee}_{\rho_d})^{i_d}\right)$. We then see that
for a monomial $\gamma(\underline{t}) = t^{i_1}_1\cdots t^{i_d}_d$, we have
\begin{equation}\label{eqn:vanish4*}
\Phi_G(\gamma(\underline{t})) = [{E(\gamma)}/G].
\end{equation}

The following result describes the higher K-theory of the toric stack
bundle $\pi: \mathfrak{X} \to B$. 

\begin{thm}\label{thm:CTB}
The homomorphism $\Phi_G$ is an isomorphism. 
\end{thm}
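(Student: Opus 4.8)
The plan is to deduce \thmref{thm:CTB} from the stacky Leray--Hirsch theorem \thmref{thm:LHT}, the fibre presentation of \thmref{thm:BH}, and the K{\"u}nneth isomorphism \propref{prop:linear}, by comparing two free modules of equal rank. Write $m = |\Delta_{\rm max}|$; since $X$ is smooth and projective it is $T$-cellular (Bialynicki--Birula, \S~\ref{subsubsection:Spherical}) and $m$ equals the number of $T$-cells. First I would verify the hypotheses of \thmref{thm:LHT}. By \lemref{lem:linear-P} the ring $\bdK_0(\mathfrak{F}) = \bdG^G_0(X)$ is free over $R(G)$ of rank $m$, with basis the classes $u_\sigma = [\sO_{V_\sigma}]$ of the closures of the affine $T$-cells. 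Taking for the $e_i$ the relative classes $y_\sigma = [E(V_\sigma)/G]$, the $T$-invariance of the cells gives $i^*(y_\sigma) = u_\sigma$ on each fibre, so $\{i^*(e_i)\}$ is an $R(G)$-basis of each $\bdK_0(\mathfrak{X}_b)$. Hence \thmref{thm:LHT} shows $\bdK_*(\mathfrak{X})$ is free over $\bdK^G_*(B)$ of rank $m$ with basis $\{y_{\sigma_1},\dots,y_{\sigma_m}\}$.

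Surjectivity of $\Phi_G$ then follows. Each basis class satisfies $y_\sigma = \prod_{\rho\subset\sigma}(1 - t_\rho)$ in $\bdK_0(\mathfrak{X})$, by the Koszul computation of the class of the transverse intersection of the divisors $E(V_\rho)$ that underlies \eqref{eqn:vanish1}; thus every $y_{\sigma_i}$ lies in $\im(\Phi_G)$. The scalar ring $\bdK^G_*(B) = \bdK_*(B)\otimes_\Z R(G)$ also lies in $\im(\Phi_G)$: the factor $\bdK_*(B)$ is the structure map, and the $R(G)$-action is realized through the characters $t_\chi = \prod_j (t_j)^{\langle -\chi, v_j\rangle}$. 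Since the $y_{\sigma_i}$ generate $\bdK_*(\mathfrak{X})$ over $\bdK^G_*(B)$, the map $\Phi_G$ is onto.

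For injectivity I would realize the source as a free module of rank $m$ over a ring isomorphic to $\bdK^G_*(B)$, matched with $\Phi_G$. Writing $R_T(\bdK_*(B),\Delta) = \bdK_*(B)\otimes_\Z \bdK_0([X/T])$, the isomorphism \eqref{eqn:BH3} together with \lemref{lem:linear-P} makes this free of rank $m$ over $\bdK^T_*(B) = \bdK_*(B)\otimes_\Z R(T)$, with basis the monomials $\ov{u}_{\sigma_i} = \prod_{\rho\subset\sigma_i}(1 - t_\rho)$. As $J^G_\Delta$ is generated by the elements $s_i = t_{\chi'_i} - r_i$ lying in $\bdK^T_*(B)$ (with $t_{\chi'_i} = \prod_j (t_j)^{\langle -\chi'_i, v_j\rangle}$), one has $R_G(\bdK_*(B),\Delta) = R_T(\bdK_*(B),\Delta)\otimes_{\bdK^T_*(B)} S$ with $S = \bdK^T_*(B)/(s_1,\dots,s_r)$, so the source is free of rank $m$ over $S$ on $\{\ov{u}_{\sigma_i}\}$. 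Because the $\chi'_i$ form a basis of $M' = (T/G)^\vee$ and the $r_i = \zeta_{\chi'_i}$ are units, $S$ is free over $\bdK_*(B)$ on coset representatives of $M'$ in $M = T^\vee$, just as $\bdK^G_*(B) = \bdK^T_*(B)/(t_{\chi'_i} - 1)$ (tensoring \lemref{lem:Groupring} with $\bdK_*(B)$). The identity \eqref{eqn:vanish3} shows that $\Phi_G(t_{\chi'_i})$ is pulled back from $B$, so $\Phi_G$ maps $S$ into the scalars $\pi^*(\bdK^G_*(B))$ and induces a ring isomorphism $\theta\colon S \xrightarrow{\cong} \bdK^G_*(B)$ carrying the class of $\chi$ to a unit multiple of $[\chi|_G]$. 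Since $\Phi_G(\ov{u}_{\sigma_i}) = y_{\sigma_i}$, the map $\Phi_G$ is $\theta$-semilinear and sends the $S$-basis $\{\ov{u}_{\sigma_i}\}$ to the $\bdK^G_*(B)$-basis $\{y_{\sigma_i}\}$; it is therefore an isomorphism.

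The step I expect to be the main obstacle is the bookkeeping of the twisting of the relations by the line bundles $r_i = \zeta_{\chi'_i}$ on $B$. Over a trivializing open these restrict to trivial bundles, $R_G(\,\cdot\,,\Delta)$ degenerates to $\bdK_*(\,\cdot\,)\otimes_\Z\bdK_0(\mathfrak{F})$, and $\Phi_G$ is literally the K{\"u}nneth isomorphism of \propref{prop:linear}; globally, however, the relation $t_{\chi'_i} = \zeta_{\chi'_i}$ must be matched exactly with the geometric identity \eqref{eqn:vanish3} for the base-ring comparison $S \cong \bdK^G_*(B)$ to go through, and this matching is what secures injectivity. Verifying that the global classes $y_\sigma$ restrict to a fibre basis, the input that makes \thmref{thm:LHT} applicable, is the other essential point.
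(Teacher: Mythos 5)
Your proposal is correct in substance but follows a genuinely different route from the paper's own proof. The paper argues by d\'evissage on the base: it stratifies $B$ by smooth locally closed pieces over which $E$ trivializes, handles the trivial--bundle case separately (Lemma~\ref{lem:Trvial-bundle-case}, where all $\zeta_{\chi'_i}=1$ and ${\bdK^T_*(B)}/{J^G}\to\bdK^G_*(B)$ is the untwisted isomorphism of Lemma~\ref{lem:Groupring}), and then runs an induction through localization sequences and the 5-lemma, which forces it to verify the commutativity of the connecting squares via the Leibniz rule and the projection formula. You instead make one global comparison: the target is free over $\bdK^G_*(B)$ on the cell-closure classes by Theorem~\ref{thm:LHT}, the source is free of the same rank over $S=\bdK^T_*(B)/(t_{\chi'_i}-\zeta_{\chi'_i})$, and your twisted homomorphism $\theta(\chi)=[\zeta_\chi]\cdot[\chi|_G]$ identifies $S$ with $\bdK^G_*(B)$ because it carries a $\bdK_*(B)$-basis indexed by $M/M'$ to unit multiples of the standard one. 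This buys a proof with no stratification and no commuting-diagram bookkeeping; what it costs is one geometric identity the paper never states: for the semilinearity you need $\Phi_G(t_\chi)=\pi^*\bigl([\zeta_\chi]\cdot[\chi|_G]\bigr)$ for \emph{every} $\chi\in M$, whereas ~\eqref{eqn:vanish3} only treats $\chi\in M'$ (where the $G$-action is trivial). You must therefore check that for general $\chi$ the $G$-equivariant line bundle $E(\pi_X^*(L_\chi))$ is the pullback of $\zeta_\chi=E\stackrel{T}{\times}L_\chi$ from $B$ twisted by the character $\chi\circ\phi$ of $G$; this is routine (the same computation as ~\eqref{eqn:vanish3}, keeping track of the character), but it is the linchpin of your injectivity step and should be written out. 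Two small corrections: the free basis consists of the classes of the cell closures $V_{\tau_i}$, not of the orbit closures of the maximal cones $\sigma_i$ (already for $\P^1$ the two fixed-point classes $1-t_1$, $1-t_2$ are not an $R(T)$-basis of $\bdK^T_0(\P^1)$), and the corresponding monomials are $\prod_{\rho\subset\tau_i}(1-t_\rho)$.
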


Before we prove this theorem, we consider some special cases
which will be used in the final proof. The following observations
will be used throughout the proofs.

The first observation is that the
cell closures of $X$ are the $T$-equivariant subschemes
$V_{\tau_i}$. So the classes of $\sO_{V_{\tau_i}}$ form an $R(G)$-basis of
$\bdK ^G_0(X)$ by Lemma~\ref{lem:linear-P}.
Since $\iota^*(y_{\tau_i}) = [\sO_{V_{\tau_i}}]$, we see that 
Theorem~\ref{thm:LHT} applies to the toric stack 
bundle $\pi:\mathfrak{X} \to B$.

Second observation is that $G$ is a diagonalizable group which acts trivially
on $B$. Hence the map $\bdK _*(B) {\underset{\Z}\otimes} R(G) \to \bdK ^G_*(B)$
is a ring isomorphism by \cite[Lemma~3.6]{Thomason1}.
This identification will be used without further mention.
Since any character $\chi \in M$ acts on $R_T(\bdK _*(B), \Delta)$ and 
$\bdK ^G_*(E(X))$ via multiplication by $t_{\chi}$ and $\Phi_G(t_{\chi})$
respectively ({\sl cf.} \cite[Proposition~4.3]{SU}),
we observe that the composite map $R_T(\bdK _*(B), \Delta) \to 
R_G(\bdK _*(B), \Delta) \to \bdK ^G_*(E(X))$ is $\bdK ^T_*(B)$-linear.

\begin{remk}\label{remk:Thomason}
We remark that the result of Thomason in \cite[Lemma~3.6]{Thomason1}
is stated for affine schemes, but his proof works for all schemes.
Another way to deduce the general case from the affine case is to
get a stratification of $B$ by affine subschemes as in ~\eqref{eqn:LHT-fil},
use induction on the number of affine strata, the localization sequence
and the fact that $R(G)$ is free over $\Z$.   
\end{remk}

\vskip .3cm

\begin{lem}\label{lem:T-case}
The homomorphism $\Phi_G$ is an isomorphism when $G = T$. 
\end{lem}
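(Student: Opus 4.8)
The plan is to deduce the lemma directly from the Leray--Hirsch theorem (Theorem~\ref{thm:LHT}), which, as recorded in the remarks preceding this lemma, applies to $\pi:\mathfrak{X}\to B$ with the classes $e_i=y_{\tau_i}$: indeed $\iota^*(y_{\tau_i})=[\sO_{V_{\tau_i}}]$ and the cell closures $\{V_{\tau_i}\}$ give an $R(T)$-basis of $\bdK^T_0(X)=\bdK_0([X/T])$ by Lemma~\ref{lem:linear-P}. First I would record the simplification occurring when $G=T$: here $T'=0$, so $M'=T'^{\vee}=0$, the index set $\{\chi'_1,\dots,\chi'_r\}$ is empty, and hence $J^T_\Delta=0$. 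Thus $R_T(\bdK_*(B),\Delta)=\bdK_*(B)[t^{\pm 1}_1,\dots,t^{\pm 1}_d]/I^T_\Delta$, with no further relations imposed.

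Next I would exhibit both sides of $\Phi_T$ as free $\bdK^T_*(B)$-modules sharing a natural basis. On the target, Theorem~\ref{thm:LHT} gives that $\bdK_*(\mathfrak{X})$ is free over $\bdK^T_*(B)$ with basis $\{y_{\tau_i}\}_{i=1}^m$. On the source, I would combine the Vezzosi--Vistoli presentation $\Z[t^{\pm 1}_1,\dots,t^{\pm 1}_d]/I^T_\Delta\xrightarrow{\cong}\bdK_0([X/T])$ of \eqref{eqn:BH3} with base change to write
\[
R_T(\bdK_*(B),\Delta)\;\cong\;\bdK_*(B)\otimes_{\Z}\bdK_0([X/T])\;\cong\;\bdK_0([X/T])\otimes_{R(T)}\bdK^T_*(B),
\]
the last step using $\bdK^T_*(B)\cong\bdK_*(B)\otimes_{\Z}R(T)$ from \cite[Lemma~5.6]{Thomason1}. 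Since $\bdK_0([X/T])$ is $R(T)$-free on the fundamental classes $\{u_{\tau_i}\}$ (Lemma~\ref{lem:linear-P}), this exhibits $R_T(\bdK_*(B),\Delta)$ as $\bdK^T_*(B)$-free with basis $\{u_{\tau_i}\}$. As noted in the remarks above, $\Phi_T$ is $\bdK^T_*(B)$-linear, so it suffices to check that it carries this basis to the Leray--Hirsch basis.

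The key computation, which I expect to be the main obstacle, is to verify that $\Phi_T(u_{\tau_i})=y_{\tau_i}$ for every $i$. I would write $\tau_i$ as the cone spanned by the rays $\rho_{j_a}\subset\tau_i$; under the identification $t_j\leftrightarrow[L^{\vee}_{\rho_j}]$ of \eqref{eqn:BH3}, the Koszul resolution of $\sO_{V_{\rho_j}}$ gives $1-t_j\leftrightarrow 1-[L^\vee_{\rho_j}]=[\sO_{V_{\rho_j}}]$, and transversality of the section of $\bigoplus_a L_{\rho_{j_a}}$ cutting out $V_{\tau_i}$ yields $u_{\tau_i}=\prod_a(1-t_{j_a})$. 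Applying $\Phi_T$ and using $\Phi_T(t_j)=[E(L^\vee_{\rho_j})/T]$, the same Whitney-sum/Koszul argument for the transverse section of $\bigoplus_a E(L_{\rho_{j_a}})$ on $E(X)$ (the relation underlying \eqref{eqn:vanish1}) identifies $\Phi_T(u_{\tau_i})=\prod_a[\sO_{E(V_{\rho_{j_a}})}]=[\sO_{E(V_{\tau_i})}]=y_{\tau_i}$. The delicate point is to confirm that the bundle construction $E(-)$ commutes with these Koszul and Whitney-sum identities and preserves transversality, so that the fiberwise fundamental class is sent to the total-space fundamental class. Once $\Phi_T(u_{\tau_i})=y_{\tau_i}$ is established, $\Phi_T$ is a $\bdK^T_*(B)$-linear map sending a basis to a basis, hence an isomorphism, which completes the proof.
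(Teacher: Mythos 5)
Your proof is correct and follows essentially the same route as the paper: both reduce the statement to Theorem~\ref{thm:LHT} together with the Vezzosi--Vistoli presentation $R_T(\Z,\Delta)\cong\bdK^T_0(X)$, identifying $R_T(\bdK_*(B),\Delta)$ with $\bdK_*(B)\otimes_{\Z}R_T(\Z,\Delta)$ so that $\Phi_T$ becomes the Leray--Hirsch map. The only difference is that you make explicit the basis-matching $\Phi_T(u_{\tau_i})=y_{\tau_i}$ via the Koszul/Whitney-sum argument, which the paper compresses into the assertion that the composite ``is the same as the map $\Phi$ in \eqref{eqn:LHT**}''.
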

\begin{proof}
In this case, we first notice that the map $R_T(\Z, \Delta) 
\xrightarrow{\phi} \bdK ^T_0(X)$ which takes $t_i$ to $[L^{\vee}_{\rho_i}]$, 
is an isomorphism of $R(T)$-algebras by \cite[Theorem~6.4]{VV}. 
On the other hand, we have the maps 
\begin{equation}\label{eqn:CTB0*}
\bdK _*(B) {\underset{\Z}\otimes} R_T(\Z, \Delta) \xrightarrow{\cong}
R_T(\bdK _*(B), \Delta)  \xrightarrow{\Phi_T}  \bdK ^T_*(E(X)),
\end{equation}
where the first map takes $\alpha \otimes t_i$ to $\alpha \cdot t_i$
for $1 \le i \le d$. This map is clearly an isomorphism 
(see ~\eqref{eqn:Reln1}).
It is clear from the definition of $\Phi_T$ that the composite map
is same as the map $\Phi$ in ~\eqref{eqn:LHT**} (with $G = T$).
It follows from Theorem~\ref{thm:LHT} that the composite map
in ~\eqref{eqn:CTB0*} is an isomorphism. We conclude that $\Phi_T$ is an
isomorphism.
\end{proof}

\begin{cor}\label{cor:T-case*}
For any closed subgroup $G \subseteq T$,
the ring $R_G(\bdK _*(B), \Delta)$ is a free $\bdK _*(B)$-module.
\end{cor}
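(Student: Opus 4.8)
The plan is to exhibit $R_G(\bdK_*(B),\Delta)$ as a base change of the torus version $R_T(\bdK_*(B),\Delta)$ along a ring map that is free as a module map, and to read off freeness over $A:=\bdK_*(B)$ from the two factors. First I would use Lemma~\ref{lem:T-case}: the isomorphism $\Phi_T$ identifies $R_T(A,\Delta)$ with $\bdK^T_*(E(X))$, and this identification is $\bdK^T_*(B)$-linear because characters act on both sides by multiplication by the corresponding $t_\chi$ (the second observation preceding Lemma~\ref{lem:T-case}). By Theorem~\ref{thm:LHT} applied with $G=T$, the module $\bdK^T_*(E(X))$ is free over $\bdK^T_*(B)=A\otimes_\Z R(T)$ on the cell classes $\{y_{\tau_i}\}_{i=1}^m$. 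Hence $R_T(A,\Delta)$ is free of rank $m$ over $A\otimes_\Z R(T)$.

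Next I would identify the quotient defining $R_G$. By Definition~\ref{defn:RING}, $R_G(A,\Delta)=R_T(A,\Delta)/\bar J$, where $\bar J$ is generated by the images of the relations $s_i=t_{\chi'_i}-r_i$. Since $t_{\chi'_i}$ acts as multiplication by $1\otimes\chi'_i$ and $r_i\in A$ as $r_i\otimes 1$ in the $(A\otimes R(T))$-module structure, the ideal $\bar J$ is the extension to $R_T(A,\Delta)$ of the ideal $J=(\,1\otimes\chi'_i-r_i\otimes 1 : 1\le i\le r\,)$ of $A\otimes R(T)$, so that
\[
R_G(A,\Delta)\;\cong\;R_T(A,\Delta)\otimes_{A\otimes R(T)}\bigl((A\otimes R(T))/J\bigr).
\]

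It then remains to check that $(A\otimes R(T))/J$ is free over $A$. Writing $A\otimes R(T)=A[M]$ and using that $\{\chi'_1,\dots,\chi'_r\}$ is a $\Z$-basis of the subgroup $M'=T'^{\vee}\subseteq M$, the subalgebra $A[M']$ is a Laurent ring $A[(\chi'_1)^{\pm 1},\dots,(\chi'_r)^{\pm 1}]$, and $A[M]$ is free over $A[M']$ on any chosen set of coset representatives of $M/M'$. Because every $r_i$ is invertible in $A$, the map $A[M']\to A$ sending $\chi'_i\mapsto r_i$ is exactly the quotient by $(\chi'_i-r_i)$, whence $(A\otimes R(T))/J=A[M]\otimes_{A[M']}A\cong\bigoplus_{\bar g\in M/M'}A\cdot\bar g$ is $A$-free of rank $|M/M'|=\rank_{\Z}R(G)$. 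Combining this with the first step, $R_G(A,\Delta)$ is free of rank $m$ over the $A$-free ring $(A\otimes R(T))/J$, and therefore free over $A=\bdK_*(B)$.

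The delicate point is this last step: because the $r_i$ are invertible elements of $A$ rather than $1$, the quotient $(A\otimes R(T))/J$ is in general a twisted form of $A\otimes R(G)$ and need not be isomorphic to it as an algebra. What rescues the argument is that only its $A$-\emph{module} structure is needed, and freeness of $A[M]$ over the subring $A[M']$ holds for an arbitrary subgroup $M'\subseteq M$ of an abelian group—even when $G^\vee=M/M'$ has torsion—so the twist by the $r_i$ does not affect $A$-freeness.
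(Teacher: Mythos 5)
Your argument is correct, and its skeleton coincides with the paper's: you realize $R_G(\bdK_*(B),\Delta)$ as the quotient of $R_T(\bdK_*(B),\Delta)$ by the extension of the ideal $J^G=(\chi'_1-\zeta_{\chi'_1},\dots,\chi'_r-\zeta_{\chi'_r})$ of $\bdK^T_*(B)\cong \bdK_*(B)\otimes_{\Z}R(T)$, you use Lemma~\ref{lem:T-case} together with Theorem~\ref{thm:LHT} (with $G=T$) to see that $R_T(\bdK_*(B),\Delta)$ is free over $\bdK^T_*(B)$, and you thereby reduce to showing that $\bdK^T_*(B)/J^G$ is free over $\bdK_*(B)$. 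The one place you genuinely depart from the paper is this last step. The paper invokes Lemma~\ref{lem:easy}, stated for an \emph{arbitrary} finite set of monomials $t_1,\dots,t_r$ in a Laurent ring and arbitrary units $u_i$; you instead observe that $A[M]$ is free over the group subring $A[M']$ on coset representatives of $M/M'$ and base-change along $A[M']\to A$, $\chi'_i\mapsto r_i$. Your version is the more robust one: Lemma~\ref{lem:easy} is false in the stated generality. For instance, with $A=\Z[1/2]$, $S=A[x^{\pm 1}]$, $t_1=x^2$, $t_2=x^3$, $u_1=1$, $u_2=2$, one gets $x-2\in(t_1-u_1,t_2-u_2)$ and hence $3=x^2-1\equiv 0$, so the quotient is $\Z/3$, which is not $A$-free. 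What makes the application legitimate---and what your argument uses explicitly---is that the $t_{\chi'_i}$ correspond to a $\Z$-basis of the subgroup $M'=T'^{\vee}\subseteq M$ (not merely an arbitrary set of monomials), so that $A[M']$ is itself a Laurent polynomial ring and $A[M]$ is $A[M']$-free even when $M/M'=G^{\vee}$ has torsion. Your closing remark that only the $A$-module structure, not the algebra structure, of $(A\otimes R(T))/J$ is needed is also exactly the right point, since the twist by the units $\zeta_{\chi'_i}$ prevents an identification with $A\otimes R(G)$ as rings in general.
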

\begin{proof}
We have seen above that the image of a
character $\chi \in M$ in $R_T(\bdK _*(B), \Delta)$ is $t_{\chi}$.
If we let $J^G$ denote the ideal $\left(\chi'_1 - \zeta_{\chi'_1}, \cdots ,
\chi'_r - \zeta_{\chi'_r}\right)$ in $\bdK ^T_*(B)$, then it follows from 
~\eqref{eqn:Reln2} that $J^G_{\Delta} = J^GR_T(\bdK _*(B), \Delta)$ under the map
$\bdK ^T_*(B) \to R_T(\bdK _*(B), \Delta)$.

It follows from Lemma~\ref{lem:T-case} and Theorem~\ref{thm:LHT}
(with $G=T$) that $R_T(\bdK _*(B), \Delta)$ is a free $\bdK ^T_*(B)$-module. 
This implies that $R_G(\bdK _*(B), \Delta) = 
{R_T(\bdK _*(B), \Delta)}/{J^G_{\Delta}}$
is a free ${\bdK ^T_*(B)}/{J^G}$-module.
Thus, it suffices to show that ${\bdK ^T_*(B)}/{J^G}$ is a free 
$\bdK _*(B)$-module.
Since $\bdK ^T_*(B)$ is isomorphic to a Laurent polynomial ring 
$\bdK _*(B)[x^{\pm 1}_1, \cdots, x^{\pm 1}_n]$ and since each character 
$\chi \in M'$ 
is a monomial in this ring, the desired freeness follows from 
Lemma~\ref{lem:easy}.
\end{proof}

\begin{lem}\label{lem:Trvial-bundle-case}
The homomorphism $\Phi_G$ is an isomorphism when $p: E \to B$ is a trivial
principal bundle.
\end{lem}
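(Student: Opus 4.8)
The plan is to reduce everything to a product and then combine the K{\"u}nneth isomorphism of Proposition~\ref{prop:linear} with the presentation of the fiber's Grothendieck ring from Theorem~\ref{thm:BH}.

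First I would unwind the trivial bundle. If $p: E \to B$ is trivial, then $E \cong B \times T$, and a direct computation gives a $G$-equivariant isomorphism $E(X) = E \stackrel{T}{\times} X \cong B \times X$ under which $G$ acts trivially on $B$ and through $\phi$ on $X$. Thus $\mathfrak{X} \cong [(B \times X)/G]$ and $\bdK _*(\mathfrak{X}) = \bdK ^G_*(B \times X)$, where $B$ carries the trivial $T$-action. Under this trivialization the line bundles $\zeta_{\chi'_i}$ of \eqref{eqn:vanish3} become trivial, since (as noted just before the lemma) $\zeta_\chi$ restricts to the trivial bundle over any open set on which $p$ is trivial. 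Hence $r_i = \zeta_{\chi'_i} = 1$ in $\bdK _0(B)$, so the relations \eqref{eqn:Reln2} have constant term $1$ and are defined over $\Z$. Consequently the base ring factors out cleanly, giving $R_G(\bdK _*(B), \Delta) \cong \bdK _*(B) \otimes_{\Z} R_G(\Z, \Delta)$.

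Next I would identify the two tensor factors. By Theorem~\ref{thm:BH}, the assignment $t_i \mapsto [L^{\vee}_{\rho_i}]$ yields a ring isomorphism $R_G(\Z, \Delta) \xrightarrow{\cong} \bdK _0([X/G]) = \bdK ^G_0(X)$. On the other hand, $X$ is $T$-equivariantly cellular (being smooth and projective toric, as observed after Theorem~\ref{thm:CTB}), so Proposition~\ref{prop:linear} supplies the K{\"u}nneth isomorphism $\bdK _*(B) \otimes_{\Z} \bdK ^G_0(X) \xrightarrow{\cong} \bdK ^G_*(B \times X)$, sending $\alpha \otimes \beta \mapsto p_B^*(\alpha)\, p_X^*(\beta)$. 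Composing these two isomorphisms gives an isomorphism $\bdK _*(B) \otimes_{\Z} R_G(\Z, \Delta) \xrightarrow{\cong} \bdK ^G_*(B \times X) = \bdK _*(\mathfrak{X})$.

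Finally I would check that this composite coincides with $\Phi_G$. Both are $\bdK _*(B)$-algebra homomorphisms, so it suffices to compare them on the generators $t_i$. For the product bundle one has $E(L^{\vee}_{\rho_i}) \cong B \times L^{\vee}_{\rho_i} = p_X^*(L^{\vee}_{\rho_i})$, whence $\Phi_G(t_i) = [E(L^{\vee}_{\rho_i})/G] = p_X^*[L^{\vee}_{\rho_i}]$ by \eqref{eqn:vanish4*}; this is exactly the image of $1 \otimes t_i$ under the composite constructed above. Therefore $\Phi_G$ agrees with that composite and is an isomorphism. The only points demanding care are the last compatibility check—that $\Phi_G$ really is the product-of-pullbacks map on generators—and, upstream, the verification that $r_i = 1$ so that $R_G(\bdK _*(B),\Delta)$ base-changes from $R_G(\Z,\Delta)$; once these are settled the argument is formal.
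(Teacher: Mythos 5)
Your proof is correct, but it takes a genuinely different route from the paper's. The paper never untwists the trivial bundle explicitly: it shows (via Lemma~\ref{lem:T-case} and Theorem~\ref{thm:LHT}) that $R_T(\bdK_*(B),\Delta)$ is a free $\bdK^T_*(B)$-module, deduces that its quotient $R_G(\bdK_*(B),\Delta)$ is free over $\bdK^T_*(B)/J^G$, identifies the latter with $\bdK^G_*(B)$ by Lemma~\ref{lem:Groupring} (using $\zeta_{\chi'_i}=1$), and then concludes because $\Phi_G$ is a basis-preserving map of free $\bdK^G_*(B)$-modules of the same rank, the target being free by Theorem~\ref{thm:LHT}. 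You instead exploit the identification $\mathfrak{X}\cong[(B\times X)/G]$ directly: the observation $r_i=1$ lets you base-change the Stanley--Reisner algebra from $\Z$, Theorem~\ref{thm:BH} identifies $R_G(\Z,\Delta)$ with $\bdK_0([X/G])$ (the ideals in Definition~\ref{defn:RING} and Theorem~\ref{thm:BH} agree since the relations $t_\chi-1$ for a basis of $M'$ generate the same ideal as for all $\chi\in M'$), and Proposition~\ref{prop:linear} supplies the K{\"u}nneth isomorphism. Your route is more self-contained for the trivial case and bypasses the Leray--Hirsch machinery entirely at this step, at the cost of invoking the global presentation of $\bdK_0([X/G])$ from Theorem~\ref{thm:BH} (itself resting on the degeneration in Theorem~\ref{thm:main-thm-1}); there is no circularity, since all of that is established in earlier sections. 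The paper's argument, by contrast, needs only the freeness statements already set up for the induction in Theorem~\ref{thm:CTB}. Two small points you should make explicit: the external product map of Proposition~\ref{prop:linear} is a ring homomorphism on smooth objects (you need this to reduce the comparison with $\Phi_G$ to generators), and the identification $E(L^\vee_{\rho_i})\cong p_X^*(L^\vee_{\rho_i})$ must be checked as $G$-equivariant line bundles, which holds because $G$ acts on both sides through $\phi$ and the chosen trivialization of $E$ is $T$-equivariant.
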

\begin{proof}
Since $p: E \to B$ is a trivial bundle, we have observed before that
$\zeta_{\chi'_i} =1$ for each $1 \le i \le r$. In particular,
the map ${\bdK ^T_*(B)}/{J^G} \to \bdK ^G_*(B)$ is an isomorphism 
by Lemma~\ref{lem:Groupring}, where $J^G$ is as in Corollary~\ref{cor:T-case*}.

It follows from Theorem~\ref{thm:LHT} and Lemma~\ref{lem:T-case} that
$\Phi_T$ is an isomorphism of free $\bdK ^T_*(B)$-modules.
This implies that $R_G(\bdK _*(B), \Delta) = 
{R_T(\bdK _*(B), \Delta)}/{J^G_{\Delta}}$ 
is a free ${\bdK ^T_*(B)}/{J^G} = \bdK ^G_*(B)$-module.
It follows from this and Theorem~\ref{thm:LHT} that 
$\Phi_G$ is a basis preserving homomorphism of
free $\bdK ^G_*(B)$-modules of same rank. Hence, it must be an isomorphism.
\end{proof}

\begin{lem}\label{lem:easy}
Let $S = A[x^{\pm 1}_1, \cdots, x^{\pm 1}_n]$ be a Laurent polynomial ring over
a commutative ring $A$ with unit. Let $\{t_1, \cdots , t_r\}$
be a set of monomials in $S$ and let $\{u_1, \cdots , u_r\}$ be a set of
units in $A$. Then the ring $\frac{S}{(t_1 - u_1, \cdots , t_r-u_r)}$
is free over $A$.
\end{lem}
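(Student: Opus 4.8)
The plan is to pass to the group-algebra description $S = A[\Z^n]$, under which the monomial $t_i$ is the basis element $x^{a_i}$ attached to its exponent vector $a_i \in \Z^n$, and then to put the family $a_1,\dots,a_r$ into Smith normal form so that the quotient becomes a tensor product of visibly free $A$-modules. First I would record two operations on the data $(\{a_i\},\{u_i\})$ that do not change the isomorphism class of $S/(t_1-u_1,\dots,t_r-u_r)$ over $A$. (i) Any $P\in\GL_n(\Z)$ induces an $A$-algebra automorphism of $S$ via $x^b\mapsto x^{Pb}$; it replaces each $a_i$ by $Pa_i$, fixes the $u_i$, and carries the ideal isomorphically. (ii) For any $Q=(q_{ji})\in\GL_r(\Z)$, set $t_i'=\prod_j t_j^{q_{ji}}$ and $u_i'=\prod_j u_j^{q_{ji}}$, which is legitimate since the $t_j$ are units in $S$ and the $u_j$ are units in $A$ (so negative exponents make sense); then the ideals $(t_i-u_i)_i$ and $(t_i'-u_i')_i$ coincide, the exponent matrix $\Theta=[a_1\mid\cdots\mid a_r]$ is changed to $\Theta Q$, and the scalars become a new family of units. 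The equality of ideals rests on the identity $x^{b+c}-vw=x^b(x^c-w)+w(x^b-v)$ (and its evident inverse-exponent analogue), applied repeatedly; invertibility of $Q$ over $\Z$ gives the reverse inclusion.

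Next I would invoke the Smith normal form: there are $P\in\GL_n(\Z)$ and $Q\in\GL_r(\Z)$ with $P\Theta Q=D$ diagonal, having invariant factors $d_1,\dots,d_s\ge 1$ in the first $s=\rank\Theta$ positions and zeros afterward. Applying (i) and (ii), the presentation reduces to the relations $x_i^{d_i}=u_i$ for $1\le i\le s$, whence
\[
\frac{S}{(t_i-u_i)_i}\;\cong\;\Big(\bigotimes_{i=1}^{s} A[x_i]/(x_i^{d_i}-u_i)\Big)\otimes_A A[x_{s+1}^{\pm1},\dots,x_n^{\pm1}].
\]
Here I use that $u_i$ is a unit, so $x_i^{d_i}=u_i$ forces $x_i$ to be invertible and hence $A[x_i^{\pm1}]/(x_i^{d_i}-u_i)=A[x_i]/(x_i^{d_i}-u_i)$; the latter is $A$-free of rank $d_i$ on $1,x_i,\dots,x_i^{d_i-1}$ because $x_i^{d_i}-u_i$ is monic. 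The trailing Laurent factor is $A$-free on its monomials, and a tensor product of free $A$-modules is free, which gives the conclusion.

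The bookkeeping in (ii) is routine; the genuine subtlety, and the main point to be pinned down, is the rank of $\Theta$. When the $a_i$ are $\Z$-linearly independent one has $s=r$, no zero columns survive the reduction, and the displayed decomposition is exactly as written, so freeness over $A$ is unconditional. If instead a zero column appears — i.e.\ the $a_i$ satisfy a nontrivial $\Z$-relation — the corresponding relation collapses to a scalar $u_i'-1\in A$, and the reduction only exhibits the quotient as free over $A$ modulo the ideal these scalars generate; this is precisely the compatibility the hypothesis tacitly requires. In the sole use of this lemma (\corref{cor:T-case*}) the elements $t_1,\dots,t_r$ are the characters of a basis of $M'$ inside $M$, so their exponent vectors are independent, $s=r$, and the argument above applies verbatim.
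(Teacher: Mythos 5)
Your argument is correct and complete, and it is worth noting that the paper offers no proof at all here: the authors simply write that the lemma ``is left as an easy exercise using the fact that $S$ is a free $A$-module on the monomials.'' Your route through the group-algebra picture $S=A[\Z^n]$, the two ideal-preserving moves (a $\GL_n(\Z)$ change of variables on the torus and a $\GL_r(\Z)$ recombination of the relations via the identity $x^{b+c}-vw=x^b(x^c-w)+w(x^b-v)$ together with its inverse-exponent analogue), and the Smith normal form reduction to relations $x_i^{d_i}=u_i$ is a clean and fully rigorous way to carry out that exercise; the final factorization into $A[x_i]/(x_i^{d_i}-u_i)$ (free of rank $d_i$ since the relation is monic and $u_i$ is a unit) tensored with a residual Laurent ring is exactly what one wants.

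More importantly, your closing observation is a genuine catch rather than a quibble: the lemma as stated is \emph{false} without some hypothesis on the monomials. For instance, with $n=1$, $A=\Z$, $t_1=x$, $t_2=x^2$, $u_1=1$, $u_2=-1$, the ideal contains $(x^2+1)-(x+1)(x-1)=2$, so the quotient is $\Z/2$, which is not $\Z$-free. Your Smith normal form analysis locates the failure precisely: whenever $\rank\Theta<r$, some relation degenerates to a scalar $u_i'-1$ and the quotient acquires $A/(u_i'-1)$ as a tensor factor. As you say, in the paper's only invocation of the lemma (Corollary~\ref{cor:T-case*}) the monomials are the characters $\chi'_1,\dots,\chi'_r$, a basis of $M'$ inside $M\cong\Z^n$, so their exponent vectors are $\Z$-linearly independent, $s=r$, and your argument applies verbatim. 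The correct statement of the lemma should therefore include the hypothesis that the exponent vectors of $t_1,\dots,t_r$ are linearly independent in $\Z^n$ (or, more generally, that the degenerate scalar relations are trivial).
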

\begin{proof}
This is left as an easy exercise using the fact that $S$ is a free $A$-module
on the monomials.
\end{proof}

\begin{lem}\label{lem:Groupring}
Let $A$ be a commutative ring with unit and let 
\[
0 \to L \to M \to N \to 0
\]
be a short exact sequence of finitely generated abelian groups.
Let $I_L$ be the ideal of the group ring $A[M]$ generated by the
set $\{s-1 | s \in S\}$, where $S$ is a generating set of $L$.
Then the map of group rings $\frac{A[M]}{I_L} \to A[N]$ is an isomorphism.
\end{lem}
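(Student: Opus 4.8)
The plan is to identify the kernel of the natural surjection of group rings $\bar\pi \colon A[M] \to A[N]$ induced by the quotient map $\pi\colon M \to N$, and then to show that this kernel is exactly $I_L$. Since $\pi$ is surjective, so is $\bar\pi$, and because $\pi(\ell) = 0$ forces $[\ell] \mapsto 1$ for every $\ell \in L$, we have $I_L \subseteq \ker\bar\pi$; hence $\bar\pi$ descends to a surjection $A[M]/I_L \to A[N]$. It then remains to prove that this descended map is injective, which amounts to the equality $\ker\bar\pi = I_L$.

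First I would reduce the problem to the ideal $I$ generated by the \emph{full} set $\{[\ell] - 1 : \ell \in L\}$ rather than just the generating set $S$. The containment $I_L \subseteq I$ is immediate, and for the reverse one uses the elementary identities $[\ell_1 + \ell_2] - 1 = [\ell_1]([\ell_2] - 1) + ([\ell_1] - 1)$ and $[-\ell] - 1 = -[\ell]^{-1}([\ell]-1)$, together with $[0] - 1 = 0$. Since $I_L$ is an \emph{ideal} (not merely a subgroup) and $S$ generates $L$ as an abelian group, an induction on the number of generators occurring in an expression for $\ell$ shows $[\ell] - 1 \in I_L$ for every $\ell \in L$, giving $I_L = I$.

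Next I would show $\ker\bar\pi = I$. Choosing a set-theoretic section $\sigma\colon N \to M$ of $\pi$, every $m \in M$ factors as $m = \sigma(\pi(m)) + \ell_m$ with $\ell_m \in L$, so that $[m] = [\sigma(\pi(m))]\,[\ell_m] \equiv [\sigma(\pi(m))] \pmod{I}$. Consequently any $x = \sum_m a_m[m]$ satisfies $x \equiv \sum_{n \in N}\big(\sum_{\pi(m) = n} a_m\big)[\sigma(n)] \pmod{I}$, where the representative on the right is a genuine $A$-combination of the distinct basis elements $[\sigma(n)]$. An element lies in $\ker\bar\pi$ precisely when $\sum_{\pi(m)=n} a_m = 0$ for each $n$, i.e. exactly when this representative vanishes; hence $\ker\bar\pi = I = I_L$.

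The argument is essentially formal, and the only point requiring genuine care is the passage from the generating set $S$ to all of $L$ in the first step, which is where the ideal structure of $I_L$ is essential. Combining the two steps, $\bar\pi$ induces the asserted isomorphism $A[M]/I_L \xrightarrow{\cong} A[N]$.
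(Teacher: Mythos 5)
Your proof is correct and complete. The paper itself does not actually prove this lemma: it simply declares it an elementary exercise and cites Proposition~2 of W.~May, \emph{Commutative group algebras}, Trans.\ Amer.\ Math.\ Soc.\ \textbf{136} (1969), so there is no argument in the paper to compare against step by step. What you supply is the standard self-contained verification, and each of its three stages is sound: the containment $I_L \subseteq \ker\bar\pi$ is immediate since $\bar\pi([s])=[0]=1$ for $s\in L$; the reduction from the generating set $S$ to all of $L$ via the cocycle identities $[\ell_1+\ell_2]-1=[\ell_1]([\ell_2]-1)+([\ell_1]-1)$ and $[-\ell]-1=-[\ell]^{-1}([\ell]-1)$ is exactly the point where the ideal structure is needed, and you correctly flag it; and the section argument showing $\ker\bar\pi\subseteq I$ works because the elements $[\sigma(n)]$, $n\in N$, are distinct members of the standard $A$-basis of $A[M]$ (as $\sigma$ is injective), so the reduced representative vanishes if and only if all the coefficient sums $\sum_{\pi(m)=n}a_m$ do, which is precisely the condition for membership in $\ker\bar\pi$. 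The only cosmetic caveat is that you silently use multiplicative notation $[\ell_1][\ell_2]=[\ell_1+\ell_2]$ for the group-ring product while the paper writes its lattices additively; this is harmless. Your write-up could be inserted in place of the citation with no loss.
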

\begin{proof}
This is an elementary exercise and a proof can be found in
\cite[Proposition~2]{May}.
\end{proof}

\vskip .5cm 

{\bf{Proof of Theorem~\ref{thm:CTB}}:}
We shall prove this theorem along the same lines as the proof of
Theorem~\ref{thm:LHT}. Recall that our base field $k$ is perfect.
We consider the stratification of $B$ by smooth locally closed subschemes
as in ~\eqref{eqn:LHT-fil}. We shall follow the notations used in the
proof of Theorem~\ref{thm:LHT}. It suffices to show by induction on $i$
that the theorem is true when $B$ is replaced by each $U_i$.
Since $U_0 = \emptyset$ and since $E \xrightarrow{p} B$ is trivial over $U_1$,
the desired isomorphism for $i \le 1$ follows from 
Lemma~\ref{lem:Trvial-bundle-case}.

Given a smooth locally closed subscheme $j: U \inj B$, let
$\zeta^U_i = j^*(\zeta_{\chi'_i}) \in \bdK _*(U)$ for $1 \le i \le r$ and set
$J^G_U = \left(\chi'_1 - \zeta^U_{1}, \cdots ,
\chi'_r - \zeta^U_{r}\right)$.

We have seen in the proof of Lemma~\ref{lem:T-case} that for any such 
inclusion $U \subseteq B$,
$R_T(\bdK _*(U), \Delta)$ is same as 
$\bdK ^T_0(X) {\underset{\Z}\otimes}\bdK_*(U)$. 
Moreover, the maps
\begin{equation}\label{eqn:CTB12*}
R_G(\bdK _*(B), \Delta) {\underset{\bdK _*(B)}\otimes} \bdK _*(U)
\cong \frac{R_T(\bdK _*(B), \Delta)}{J^GR_T(\bdK _*(B), \Delta)} 
{\underset{\bdK _*(B)}\otimes} \bdK _*(U) \to 
\frac{R_T(\bdK _*(U), \Delta)}{J^G_UR_T(\bdK _*(U), \Delta)} 
\end{equation}
\[
\hspace*{10cm}
\to R_G(\bdK _*(U), \Delta)
\]
are all isomorphisms.

We now consider the diagram: 
\begin{equation}\label{eqn:LHT&CT}
\xymatrix@C.4pc{
R_G(\bdK _*(U_i), \Delta) \ar[r] \ar[d]_{\Phi^{U_i}_G} &
R_G(\bdK _*(V_{i+1}), \Delta) \ar[r] \ar[d]_{\Phi^{V_{i+1}}_G} &
R_G(\bdK _*(U_{i+1}), \Delta) \ar[r] \ar[d]_{\Phi^{U_{i+1}}_G} &
R_G(\bdK _*(U_i), \Delta) \ar[r] \ar[d]_{\Phi^{U_i}_G} &
R_G(\bdK _*(V_{i+1}), \Delta) \ar[d]_{\Phi^{V_{i+1}}_G} \\
\bdK _*(\mathfrak{X}_i) \ar[r] & \bdK _*(\mathfrak{W}_{i+1}) \ar[r] & 
\bdK _*(\mathfrak{X}_{i+1}) \ar[r] & 
\bdK _*(\mathfrak{X}_i) \ar[r] & \bdK _*(\mathfrak{W}_{i+1}).}
\end{equation}

Using ~\eqref{eqn:CTB12*}, we see that the top row of  ~\eqref{eqn:LHT&CT} 
is obtained by tensoring the localization exact sequence  
\[
\cdots \to \bdK _*(U_{i}) \to \bdK _*(V_{i+1}) \to \bdK _*(U_{i+1}) \to \bdK _*(U_{i}) \to
\bdK _*(V_{i+1}) \to \cdots
\]
of $\bdK _*(B)$-modules with $R_G(\bdK _*(B), \Delta)$. Hence, this row is exact by
Corollary~\ref{cor:T-case*}. The bottom row is anyway a localization exact 
sequence. 

We now show that the diagram ~\eqref{eqn:LHT&CT} commutes. 
It is clear that the third square commutes
and the fourth square is same as the first. So we need to check that the
first two squares commute.

Let $\alpha: V_{i+1} \inj U_{i+1}$ and $\beta:   \mathfrak{M}_{i+1} \inj 
\mathfrak{X}_{i+1}$ be the closed immersions of smooth schemes and stacks.
Following the notations in the proof of Theorem~\ref{thm:LHT}, 
we see that for any $u \in \bdK _*(U_i)$ and for any monomial 
$\gamma(\underline{t}) = t^{i_1}_1\cdots t^{i_d}_d$,

\begin{equation}\label{eqn:CTB13*}
\begin{array}{lll}
\delta \circ \Phi^{U_i}_G(u \otimes \gamma(\underline{t})) & = &
\delta\left(\pi^*_{U_i}(u) \cdot 
\eta^*_i\left([{E(\gamma)}/G]\right)\right) \\
& {=} & \delta(\pi^*_{U_i}(u)) \cdot 
\iota^*_{i+1}\left([{E(\gamma)}/G]\right) \\ 
& = & \pi^*_{V_{i+1}}(\delta(u)) \cdot 
\iota^*_{i+1}\left([{E(\gamma)}/G]\right) \\ 
& = & \Phi^{V_{i+1}}_G \left(\delta(u)\otimes \gamma(\underline{t})\right) \\
& = & \Phi^{V_{i+1}}_G \circ \delta(u \otimes \gamma(\underline{t})),
\end{array}
\end{equation}
where $E(\gamma) \in \bdK _*(\mathfrak{X})$ is as in ~\eqref{eqn:vanish4*}. 
The second equality follows from the Leibniz rule and the third equality
follows from the commutativity of ~\eqref{eqn:LHT&}.
This shows that the first (and the last) square commutes.

To show the commutativity of the second square, let $v \in \bdK _*(V_{i+1})$.
We then have
\begin{equation}\label{eqn:CTB13*}
\begin{array}{lll}
{\beta}_* \circ \Phi^{V_{i+1}}_G (v \otimes \gamma(\underline{t})) & = &
{\beta}_* \left(\pi^*_{V_{i+1}}(v) \cdot 
\iota^*_{i+1}\left([{E(\gamma)}/G]\right)\right) \\ 
& = &
{\beta}_* \left(\pi^*_{V_{i+1}}(v) \cdot 
\beta^* \circ \eta^*_{i+1}\left([{E(\gamma)}/G]\right)\right) \\ 
& = & {\beta}_*(\pi^*_{V_{i+1}}(v)) \cdot
\eta^*_{i+1}\left([{E(\gamma)}/G]\right) \\
& = & \pi^*_{U_{i+1}}(\alpha_*(v)) \cdot
\eta^*_{i+1}\left([{E(\gamma)}/G]\right) \\
& = & \Phi^{U_{i+1}}_G (\alpha_*(v) \otimes\gamma(\underline{t}) ) \\
& = & \Phi^{U_{i+1}}_G \circ \alpha_* (v \otimes \gamma(\underline{t})),
\end{array}
\end{equation}
where third equality follows from the projection formula and the
fourth equality follows from the commutativity of ~\eqref{eqn:LHT&}.
This shows that the second square commutes.

The first and the fourth vertical arrows are isomorphisms 
by induction. The second and the fifth vertical arrows are isomorphisms by 
Lemma~\ref{lem:Trvial-bundle-case}. 
Hence the middle vertical arrow is also an isomorphism by 5-lemma.
This concludes the proof of Theorem~\ref{thm:CTB}.
$\hspace*{12.5cm} \hfil\square$

\vskip .3cm 

\begin{remk}\label{remk:Non-reduced-bundle}
It was assumed in Theorem~\ref{thm:CTB} that $G$ is subgroup of $T$.
Since $\mathfrak{X}$ is just the toric stack $[{E(X)}/G]$ associated
to the data $(E(X), G \xrightarrow{\phi} T)$, the
general case can always be reduced to the case of Theorem~\ref{thm:CTB}.
We refer to Remark~\ref{remk:non-red} for how this can be done. 
\end{remk}

\vskip .3cm

\noindent\emph{Acknowledgments.}
Parts of this work were carried out while the first author was visiting the Tata
Institute of Fundamental Research, while the second author was visiting
the Mathematics department of Ohio state university,
Columbus and also while both the authors were visiting
the Mathematics department  of the Harish Chandra
Research Institute, Allahabad. The first author was also
supported by an adjunct professorship at the same institute. They would
 would like to thank these
departments for the invitation and financial support during these visits.
They also would like to thank Hsian-Hua Tseng for helpful comments on an 
earlier version of this paper.

\enlargethispage*{75pt}

\end{document}